\documentclass[reqno]{amsart}
\usepackage{amssymb}
\usepackage{amsmath}
\usepackage{amsfonts}

\setcounter{MaxMatrixCols}{10}

\newtheorem{theorem}{Theorem}
\theoremstyle{plain}

\newtheorem{claim}{Claim}

\newtheorem{corollary}{Corollary}

\newtheorem{definition}{Definition}

\newtheorem{lemma}{Lemma}

\newtheorem{proposition}{Proposition}
\newtheorem{remark}{Remark}

\DeclareMathOperator{\Div}{div}
\numberwithin{equation}{section}
\input{tcilatex}

\begin{document}
\title[Introverted algebras with mean value]{Introverted algebras with mean
value and applications}
\author{Jean Louis Woukeng}
\address{Department of Mathematics and Computer Science, University of
Dschang, P.O. Box 67, Dschang, Cameroon}
\email{jwoukeng@yahoo.fr}
\date{February, 2013}
\subjclass[2000]{Primary 46J10, 45K05; Secondary 35B40}
\keywords{Introverted algebras with mean value, semigroups,
sigma-convergence, convolution, homogenization}

\begin{abstract}
Let $A$ be an introverted algebra with mean value. We prove that its
spectrum $\Delta (A)$ is a compact topological semigroup, and that the
kernel $K(\Delta (A))$ of $\Delta (A)$ is a compact topological group over
which the mean value on $A$ can be identified as the Haar integral. Based on
these facts and also on the fact that $K(\Delta (A))$ is an ideal of $\Delta
(A)$, we define the convolution over $\Delta (A)$. We then use it to derive
some new convergence results involving the convolution product of sequences.
These convergence results provide us with an efficient method for studying
the asymptotics of nonlocal problems. The obtained results systematically
establish the connection between the abstract harmonic analysis and the
homogenization theory. To illustrate this, we work out some homogenization
problems in connection with nonlocal partial differential equations.
\end{abstract}

\maketitle

\section{Introduction and the main results}

Let $A$ be an algebra with mean value on $\mathbb{R}^{N}$, that is, a closed
subalgebra of the commutative Banach algebra BUC$(\mathbb{R}^{N})$ (of
bounded uniformly continuous functions on $\mathbb{R}^{N}$) that contains
the constants, is closed under complex conjugation, is translation invariant
and has an invariant mean value. Thus $A$ is a commutative Banach algebra
with spectrum denoted by $\Delta (A)$. We consider each element of $\Delta
(A)$ as a multiplicative linear functional on $A$. The usual (or Gelfand)
topology of $\Delta (A)$ is the relative weak$\ast $ topology induced on $%
\Delta (A)$ by $\sigma (A^{\prime },A)$. The properties of the Gelfand space 
$\Delta (A)$ are well known and can be found in any text book about Banach
algebras, see for instance \cite{Larsen}. The commonly known property is
that $\Delta (A)$ is a compact topological space. It is also known that it
becomes metrizable provided that $A$ is separable. In some special cases, $%
\Delta (A)$ is well characterized. For example, when $A$ is the algebra of
almost periodic functions, $\Delta (A)$ is a compact topological abelian
group, and in particular if $A$ is the algebra of periodic functions, $%
\Delta (A)$ is the $N$-dimensional torus $\mathbb{T}^{N}$.

On the other hand, it seems that almost nothing is known about $\Delta (A)$
for general algebras with mean value $A$. In this paper our aim is to
characterize the space $\Delta (A)$ for some general algebras $A$ and
present some applications. The relevance and importance of this
characterization is due, among other things, to the fact that if $A$ is 
\emph{introverted} \cite[p. 121]{TM} (see also \cite[p. 540]{Day} for the
general concept of introversion) then $\Delta (A)$ is a compact topological
semigroup (see Theorem \ref{th2} below). We are particularly interested in
the following questions:

\begin{itemize}
\item[(a)] When is the space $\Delta (A)$ a topological semigroup?

\item[(b)] Under which conditions is $\Delta (A)$ a topological group?

\item[(c)] In the case $\Delta (A)$ is a topological semigroup, what are the
properties of its kernel?

\item[(d)] Is it possible to define the convolution on $\Delta (A)$?

\item[(e)] In the case the convolution can be defined on $\Delta (A)$, what
is the connection between it and the $\Sigma $-convergence method?
\end{itemize}

In this paper we try to answer these questions and present some applications
of the obtained results, and the notions introduced to study them. In
particular we test these questions on the algebra of almost periodic
functions, the algebra of functions that converge at infinity, and in
general on any closed subalgebra of the algebra of weakly almost periodic
functions.

The content of the paper is summarized as follows. Section 2 deals with the
fundamental notions about algebras with mean value, the generalized
Besicovitch spaces and the derivation theory both associated to them. In
Section 3 we study the introverted algebras with mean value. We give the
answer to the questions (a), (b) and (c) raised above, but this in the
special setting of introverted algebras with mean value. The main results of
this section are as follows.

(1) \emph{If the algebra }$A$\emph{\ is introverted, then }$\Delta (A)$\emph{%
\ is a compact topological semigroup}. This result is known in the general
theory of Banach algebras of uniformly continuous functions; see e.g. \cite%
{Lau1, TM}. Our proof relies only on the compactness of $\Delta (A)$. We
also show that, \emph{if further the multiplication defined on }$\Delta (A)$%
\emph{\ is jointly continuous, then }$\Delta (A)$\emph{\ is a compact
topological group}. The second main result of Section 3 reads as

(2) \emph{If }$A$\emph{\ is introverted, then }$A$\emph{\ is a subalgebra of
the weakly almost periodic functions, and moreover, if the multiplication in 
}$\Delta (A)$\emph{\ is jointly continuous, then }$A$\emph{\ is a subalgebra
of the almost periodic functions}. The third main result of Section 3 is the
answer to the question (c) raised above, and is this

(3) \emph{If }$A$\emph{\ is introverted, then the kernel }$K(\Delta (A))$%
\emph{\ of }$\Delta (A)$\emph{\ is a compact topological group, and the mean
value on }$A$\emph{\ can be identified as the Haar integral over }$K(\Delta
(A))$.

In all the previous works dealing with algebras with mean value, the mean
value were identified as the integral over the spectrum only, see for
instance \cite{Hom1, CMP, NA}. Here we go further and we will see that this
result is of first importance when defining the convolution over the
spectrum of such algebras. The last main result of Section 3 is the basic
tool that enables us to establish the connection between the convolution and
the $\Sigma $-convergence method. It is new and constitutes the point of
departure of all the results dealing with convergence of sequences involving
delay. It reads as

(4) \emph{Let }$A$\emph{\ be an introverted algebra with mean value on }$%
\mathbb{R}^{N}$\emph{. Then if }$\delta _{y}$\emph{\ denotes the Dirac mass
at }$y$\emph{, we have }$\delta _{y}\in K(\Delta (A))$\emph{\ for almost all 
}$y\in \mathbb{R}^{N}$. We end Section 3 with the answer to question (d). We
define the convolution on the spectrum $\Delta (A)$ in terms of its kernel $%
K(\Delta (A))$.

In Section 4, in order to deal with some applications in homogenization
theory, we state and prove a De Rham type result. More precisely, the main
result of this section is this

(5) \emph{If }$A$\emph{\ is an algebra with mean value on }$\mathbb{R}^{N}$%
\emph{\ and }$L$\emph{\ is a bounded linear functional on }$(\mathcal{B}%
_{A}^{1,p^{\prime }})^{N}$\emph{\ which vanishes on the kernel of the
divergence, then there exists a function }$f\in \mathcal{B}_{A}^{p}$\emph{\
such that }$L=\overline{\nabla }_{y}f$. Although classically known in the
general framework of the distribution theory and in the special setting of
periodic functions, the above result is new in the framework of general
algebras with mean value.

In Section 5 we gather the notation and basic facts we need about the $%
\Sigma $-convergence method. Section 6 deals with the answer to question
(e). We study therein the connection between the convolution and the $\Sigma 
$-convergence method. The main result of this section is the following.

(6) \emph{Let }$(u_{\varepsilon })_{\varepsilon >0}\subset L^{p}(\Omega )$%
\emph{\ and }$(v_{\varepsilon })_{\varepsilon >0}\subset L^{q}(\mathbb{R}%
^{N})$\emph{\ be two sequences with }$p\geq 1$\emph{, }$q\geq 1$\emph{\ and }%
$\frac{1}{p}+\frac{1}{q}=1+\frac{1}{m}$\emph{. Assume that, as }$\varepsilon
\rightarrow 0$\emph{, }$u_{\varepsilon }\rightarrow u_{0}$\emph{\ in }$%
L^{p}(\Omega )$\emph{-weak }$\Sigma $\emph{\ and }$v_{\varepsilon
}\rightarrow v_{0}$\emph{\ in }$L^{q}(\mathbb{R}^{N})$\emph{-strong }$\Sigma 
$\emph{, where }$u_{0}$\emph{\ and }$v_{0}$\emph{\ are in }$L^{p}(\Omega ;%
\mathcal{B}_{A}^{p})$ and $L^{q}(\mathbb{R}^{N};\mathcal{B}_{A}^{q})$\emph{\
respectively. Then, as }$\varepsilon \rightarrow 0$\emph{, }$u_{\varepsilon
}\ast v_{\varepsilon }\rightarrow u_{0}\ast \ast v_{0}$\emph{\ in }$%
L^{m}(\Omega )$\emph{-weak }$\Sigma $ \emph{where }$\ast \ast $\emph{\
stands for the double convolution.} This result is also new.

One of the main motivation of the present study arises from the importance
and applications of the phenomena with delay in the real life. The results
of Section 6 have applications in mathematical neuroscience, engineering
sciences etc. In order to show how it works, we present in Sections 7 and 8
two applications of the results of the previous sections to homogenization
theory. In particular in Section 8, we give the answer to a question raised
by Attouch and Damlamian \cite{AD86} about the homogenization of nonlinear
operators involving convolution. This is a true advance as far as the
comprehension of the spectrum of an algebra with mean value as well as the
homogenization theory in general, are concerned.

We end this section with some preliminary notions. A \emph{directed set} is
a set $E$ equipped with a binary relation $\lesssim $ such that

\begin{itemize}
\item $\alpha \lesssim \alpha $ for all $\alpha \in E$;

\item if $\alpha \lesssim \beta $ and $\beta \lesssim \gamma $ then $\alpha
\lesssim \gamma $;

\item for any $\alpha ,\beta \in E$ there exists $\gamma \in E$ such that $%
\alpha \lesssim \gamma $ and $\beta \lesssim \gamma $.
\end{itemize}

\noindent A \emph{net} in a set $X$ is a mapping $\alpha \mapsto x_{\alpha }$
from a directed set $E$ into $X$. We usually denote such a mapping by $%
(x_{\alpha })_{\alpha \in E}$, or just by $(x_{\alpha })$ if $E$ is
understood, and we say that $(x_{\alpha })$ is indexed by $E$. Here below
are some examples of directed sets:

\begin{itemize}
\item[(i)] The set of positive integers $\mathbb{N}$, with $j\lesssim k$ if
and only if $j\leq k$.

\item[(ii)] The set $\mathbb{R}\backslash \{a\}$ ($a\in \mathbb{R}$), with $%
x\lesssim y$ if and only if $\left\vert x-a\right\vert \geq \left\vert
y-a\right\vert $.
\end{itemize}

Unless otherwise stated, vector spaces throughout are assumed to be real
vector spaces, and scalar functions are assumed to take real values. The
results obtained here easily carry over mutatis mutandis to the complex
setting.

The results of Section 6 were announced in \cite{SW}.

\section{Fundamentals of algebras with mean value}

We refer the reader to \cite{Deterhom} for details regarding some of the
results of this section.

A closed subalgebra $A$ of the $\mathcal{C}$*-algebra of bounded uniformly
continuous functions BUC$(\mathbb{R}^{N})$ is an \emph{algebra with mean
value} on $\mathbb{R}^{N}$ \cite{Jikov, NA, Zhikov4} if it contains the
constants, is translation invariant ($u(\cdot +a)\in A$ for any $u\in A$ and
each $a\in \mathbb{R}^{N}$) and is such that any of its elements possesses a
mean value, that is, for any $u\in A$, the sequence $(u^{\varepsilon
})_{\varepsilon >0}$ (defined by $u^{\varepsilon }(x)=u(x/\varepsilon )$, $%
x\in \mathbb{R}^{N}$) weakly$\ast $-converges in $L^{\infty }(\mathbb{R}^{N})
$ to some constant real function $M(u)$ as $\varepsilon \rightarrow 0$.

It is known that $A$ (endowed with the sup norm topology) is a commutative $%
\mathcal{C}$*-algebra with identity. We denote by $\Delta (A)$ the spectrum
of $A$ and by $\mathcal{G}$ the Gelfand transformation on $A$. We recall
that $\Delta (A)$ (a subset of the topological dual $A^{\prime }$ of $A$) is
the set of all nonzero multiplicative linear functionals on $A$, and $%
\mathcal{G}$ is the mapping of $A$ into $\mathcal{C}(\Delta (A))$ such that $%
\mathcal{G}(u)(s)=\left\langle s,u\right\rangle $ ($s\in \Delta (A)$), where 
$\left\langle ,\right\rangle $ denotes the duality pairing between $%
A^{\prime }$ and $A$. When equipped with the relative weak$\ast $ topology
on $A^{\prime }$ (the topological dual of $A$), $\Delta (A)$ is a compact
topological space, and the Gelfand transformation $\mathcal{G}$ is an
isometric $\ast $-isomorphism identifying $A$ with $\mathcal{C}(\Delta (A))$
as $\mathcal{C}$*-algebras. Moreover the mean value $M$ defined on $A$ is a
nonnegative continuous linear functional that can be expressed in terms of a
Radon measure $\beta $ (of total mass $1$) in $\Delta (A)$ (called the $M$%
\textit{-measure} for $A$ \cite{Hom1}) satisfying the property that $%
M(u)=\int_{\Delta (A)}\mathcal{G}(u)d\beta $\ for $u\in A$.

To any algebra with mean value $A$ we associate the following subspaces: $%
A^{m}=\{\psi \in \mathcal{C}^{m}(\mathbb{R}^{N}):$ $D_{y}^{\alpha }\psi \in
A $ $\forall \alpha =(\alpha _{1},...,\alpha _{N})\in \mathbb{N}^{N}$ with $%
\left\vert \alpha \right\vert \leq m\}$ (where $D_{y}^{\alpha }\psi
=\partial ^{\left\vert \alpha \right\vert }\psi /\partial y_{1}^{\alpha
_{1}}\cdot \cdot \cdot \partial y_{N}^{\alpha _{N}}$). Under the norm $%
\left\Vert \left\vert u\right\vert \right\Vert _{m}=\sup_{\left\vert \alpha
\right\vert \leq m}\left\Vert D_{y}^{\alpha }\psi \right\Vert _{\infty }$, $%
A^{m}$ is a Banach space. We also define the space $A^{\infty }=\{\psi \in 
\mathcal{C}^{\infty }(\mathbb{R}^{N}):$ $D_{y}^{\alpha }\psi \in A$ $\forall
\alpha =(\alpha _{1},...,\alpha _{N})\in \mathbb{N}^{N}\}$, a Fr\'{e}chet
space when endowed with the locally convex topology defined by the family of
norms $\left\Vert \left\vert \cdot \right\vert \right\Vert _{m}$.

Next, let $B_{A}^{p}$ ($1\leq p<\infty $) denote the Besicovitch space
associated to $A$, that is the closure of $A$ with respect to the
Besicovitch seminorm 
\begin{equation*}
\left\Vert u\right\Vert _{p}=\left( \underset{r\rightarrow +\infty }{\lim
\sup }\frac{1}{\left\vert B_{r}\right\vert }\int_{B_{r}}\left\vert
u(y)\right\vert ^{p}dy\right) ^{1/p}
\end{equation*}%
where $B_{r}$ is the open ball of $\mathbb{R}^{N}$ centered at the origin
and of radius $r>0$. It is known that $B_{A}^{p}$ is a complete seminormed
vector space verifying $B_{A}^{q}\subset B_{A}^{p}$ for $1\leq p\leq
q<\infty $. From this last property one may naturally define the space $%
B_{A}^{\infty }$ as follows: 
\begin{equation*}
B_{A}^{\infty }=\{f\in \cap _{1\leq p<\infty }B_{A}^{p}:\sup_{1\leq p<\infty
}\left\Vert f\right\Vert _{p}<\infty \}\text{.}\;\;\;\;\;\;\;\;\;
\end{equation*}%
We endow $B_{A}^{\infty }$ with the seminorm $\left[ f\right] _{\infty
}=\sup_{1\leq p<\infty }\left\Vert f\right\Vert _{p}$, which makes it a
complete seminormed space. We recall that the spaces $B_{A}^{p}$ ($1\leq
p\leq \infty $) are not in general Fr\'{e}chet spaces since they are not
separated in general. The following properties are worth noticing \cite{CMP,
NA}:

\begin{itemize}
\item[(\textbf{1)}] The Gelfand transformation $\mathcal{G}:A\rightarrow 
\mathcal{C}(\Delta (A))$ extends by continuity to a unique continuous linear
mapping (still denoted by $\mathcal{G}$) of $B_{A}^{p}$ into $L^{p}(\Delta
(A))$, which in turn induces an isometric isomorphism $\mathcal{G}_{1}$ of $%
B_{A}^{p}/\mathcal{N}=\mathcal{B}_{A}^{p}$ onto $L^{p}(\Delta (A))$ (where $%
\mathcal{N}=\{u\in B_{A}^{p}:\mathcal{G}(u)=0\}$). Moreover if $u\in
B_{A}^{p}\cap L^{\infty }(\mathbb{R}^{N})$ then $\mathcal{G}(u)\in L^{\infty
}(\Delta (A))$ and $\left\Vert \mathcal{G}(u)\right\Vert _{L^{\infty
}(\Delta (A))}\leq \left\Vert u\right\Vert _{L^{\infty }(\mathbb{R}^{N})}$.

\item[(\textbf{2)}] The mean value $M$ defined on $A$, extends by continuity
to a positive continuous linear form (still denoted by $M$) on $B_{A}^{p}$
satisfying $M(u)=\int_{\Delta (A)}\mathcal{G}(u)d\beta $ ($u\in B_{A}^{p}$).
Furthermore, $M(\tau _{a}u)=M(u)$ for each $u\in B_{A}^{p}$ and all $a\in 
\mathbb{R}^{N}$, where $\tau _{a}u=u(\cdot +a)$. Moreover for $u\in B_{A}^{p}
$ we have $\left\Vert u\right\Vert _{p}=\left[ M(\left\vert u\right\vert
^{p})\right] ^{1/p}$, and for $u+\mathcal{N}\in \mathcal{B}_{A}^{p}$ we may
still define its mean value once again denoted by $M$, as $M(u+\mathcal{N}%
)=M(u)$.
\end{itemize}

Let $1\leq p\leq \infty $. In order to define the Sobolev type spaces
associated to the algebra $A$, we consider the $N$-parameter group of
isometries $\{T(y):y\in \mathbb{R}^{N}\}$ defined by 
\begin{equation*}
T(y):\mathcal{B}_{A}^{p}\rightarrow \mathcal{B}_{A}^{p}\text{,\ }T(y)(u+%
\mathcal{N})=\tau _{y}u+\mathcal{N}\text{ for }u\in B_{A}^{p}\text{.}
\end{equation*}%
Since the elements of $A$ are uniformly continuous, $\{T(y):y\in \mathbb{R}%
^{N}\}$ is a strongly continuous group of operators in $\mathcal{L}(\mathcal{%
B}_{A}^{p},\mathcal{B}_{A}^{p})$ (the Banach space of continuous linear
functionals of $\mathcal{B}_{A}^{p}$ into $\mathcal{B}_{A}^{p}$): $T(y)(u+%
\mathcal{N})\rightarrow u+\mathcal{N}$ in $\mathcal{B}_{A}^{p}$ as $%
\left\vert y\right\vert \rightarrow 0$. We also associate to $\{T(y):y\in 
\mathbb{R}^{N}\}$ the following $N$-parameter group $\{\overline{T}(y):y\in 
\mathbb{R}^{N}\}$ defined by 
\begin{equation*}
\overline{T}(y):L^{p}(\Delta (A))\rightarrow L^{p}(\Delta (A));\ \overline{T}%
(y)\mathcal{G}_{1}(u+\mathcal{N})=\mathcal{G}_{1}(T(y)(u+\mathcal{N}))\text{
for }u\in B_{A}^{p}\text{.}
\end{equation*}%
The group $\{\overline{T}(y):y\in \mathbb{R}^{N}\}$ is also strongly
continuous. The infinitesimal generator of $T(y)$ (resp. $\overline{T}(y)$)
along the $i$th coordinate direction, denoted by $D_{i,p}$ (resp. $\partial
_{i,p}$), is defined as 
\begin{equation*}
D_{i,p}u=\lim_{t\rightarrow 0}\left( \frac{T(te_{i})u-u}{t}\right) \text{\
in }\mathcal{B}_{A}^{p}\text{ (resp. }\partial _{i,p}v=\lim_{t\rightarrow
0}\left( \frac{\overline{T}(te_{i})v-v}{t}\right) \text{\ in }L^{p}(\Delta
(A))\text{)}
\end{equation*}%
where here we have used the same letter $u$ to denote the equivalence class
of an element $u\in B_{A}^{p}$ in $\mathcal{B}_{A}^{p}$ and $e_{i}=(\delta
_{ij})_{1\leq j\leq N}$ ($\delta _{ij}$ being the Kronecker $\delta $). The
domain of $D_{i,p}$ (resp. $\partial _{i,p}$) in $\mathcal{B}_{A}^{p}$
(resp. $L^{p}(\Delta (A))$) is denoted by $\mathcal{D}_{i,p}$ (resp. $%
\mathcal{W}_{i,p}$). In the sequel we denote by $\varrho $ the canonical
mapping of $B_{A}^{p}$ onto $\mathcal{B}_{A}^{p}$, that is, $\varrho (u)=u+%
\mathcal{N}$ for $u\in B_{A}^{p}$. The following results are justified in 
\cite{Deterhom}. We refer the reader to the above-mentioned paper for their
justification.

\begin{proposition}
\label{p2.1}$\mathcal{D}_{i,p}$ (resp. $\mathcal{W}_{i,p}$) is a vector
subspace of $\mathcal{B}_{A}^{p}$ (resp. $L^{p}(\Delta (A))$), $D_{i,p}:%
\mathcal{D}_{i,p}\rightarrow \mathcal{B}_{A}^{p}$ (resp. $\partial _{i,p}:%
\mathcal{W}_{i,p}\rightarrow L^{p}(\Delta (A))$) is a linear operator, $%
\mathcal{D}_{i,p}$ (resp. $\mathcal{W}_{i,p}$) is dense in $\mathcal{B}%
_{A}^{p}$ (resp. $L^{p}(\Delta (A))$), and the graph of $D_{i,p}$ (resp. $%
\partial _{i,p}$) is closed in $\mathcal{B}_{A}^{p}\times \mathcal{B}%
_{A}^{p} $ (resp. $L^{p}(\Delta (A))\times L^{p}(\Delta (A))$).
\end{proposition}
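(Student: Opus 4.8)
The plan is to recognize all four assertions as instances of the classical theory of infinitesimal generators of strongly continuous one-parameter groups, and to read them off from that theory. First I would fix the index $i$ and set $S(t)=T(te_{i})$. Because translation satisfies $\tau_{y+z}=\tau_{y}\tau_{z}$, the family $\{S(t):t\in\mathbb{R}\}$ is a one-parameter group, and it is strongly continuous by the stated strong continuity of $\{T(y)\}$; moreover each $S(t)$ is an isometry, so $\|S(t)\|\leq 1$ for all $t$. The space $\mathcal{B}_{A}^{p}$ is a Banach space, being isometrically isomorphic to $L^{p}(\Delta(A))$ via $\mathcal{G}_{1}$. Thus $D_{i,p}$ is exactly the generator of the strongly continuous group $S(\cdot)$ on a Banach space, and $\mathcal{D}_{i,p}$ its domain. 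Since $\mathcal{G}_{1}$ is an isometric isomorphism intertwining $T(y)$ and $\overline{T}(y)$, it also intertwines the two generators; hence the four assertions for $\partial_{i,p}$ follow verbatim from those for $D_{i,p}$, and it suffices to treat the latter.

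Linearity is immediate: the difference quotient $t^{-1}(S(t)u-u)$ is linear in $u$, so the set of $u$ for which it converges is a subspace and the limit map $D_{i,p}$ is linear. For density I would use the averaging device. Given $u\in\mathcal{B}_{A}^{p}$, strong continuity makes $s\mapsto S(s)u$ continuous, hence Bochner integrable on compact intervals, and I set $u_{\tau}=\tau^{-1}\int_{0}^{\tau}S(s)u\,ds$ for $\tau>0$. Using the group law to write $S(h)u_{\tau}=\tau^{-1}\int_{h}^{\tau+h}S(s)u\,ds$ and then telescoping, the difference quotient becomes $(h\tau)^{-1}\big(\int_{\tau}^{\tau+h}S(s)u\,ds-\int_{0}^{h}S(s)u\,ds\big)$, whose limit as $h\to 0$ is $\tau^{-1}(S(\tau)u-u)$ by continuity of $s\mapsto S(s)u$. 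Hence $u_{\tau}\in\mathcal{D}_{i,p}$ with $D_{i,p}u_{\tau}=\tau^{-1}(S(\tau)u-u)$. Finally $u_{\tau}\to u$ as $\tau\to 0^{+}$, again because the average of a continuous function tends to its value at the origin; this proves that $\mathcal{D}_{i,p}$ is dense.

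Closedness is the step I expect to require the most care. The key is the integral identity $S(t)u-u=\int_{0}^{t}S(s)D_{i,p}u\,ds$ valid for $u\in\mathcal{D}_{i,p}$. To obtain it I would show that for such $u$ the map $s\mapsto S(s)u$ is differentiable with derivative $S(s)D_{i,p}u$: the group law gives $h^{-1}(S(s+h)u-S(s)u)=S(s)\,h^{-1}(S(h)u-u)$, and since $S(s)$ is bounded it may be pulled through the limit, yielding $S(s)D_{i,p}u$; integrating this derivative gives the identity. Now if $u_{n}\in\mathcal{D}_{i,p}$ with $u_{n}\to u$ and $D_{i,p}u_{n}\to v$, I pass to the limit in $S(t)u_{n}-u_{n}=\int_{0}^{t}S(s)D_{i,p}u_{n}\,ds$: the left side converges to $S(t)u-u$ by continuity of $S(t)$, while on the right the bound $\|S(s)\|\leq 1$ gives $\|S(s)D_{i,p}u_{n}-S(s)v\|\leq\|D_{i,p}u_{n}-v\|\to 0$ uniformly in $s\in[0,t]$, so the integral converges to $\int_{0}^{t}S(s)v\,ds$. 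Dividing the resulting identity $S(t)u-u=\int_{0}^{t}S(s)v\,ds$ by $t$ and letting $t\to 0^{+}$ yields $D_{i,p}u=v$, whence $u\in\mathcal{D}_{i,p}$ and the graph of $D_{i,p}$ is closed. Transporting every step through the isometric isomorphism $\mathcal{G}_{1}$ delivers the corresponding statements for $\partial_{i,p}$, completing the proof.
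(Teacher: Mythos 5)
Your proof is correct. Note, however, that the paper does not actually prove Proposition \ref{p2.1}: it states that the result is ``justified in'' the reference \cite{Deterhom} and refers the reader there, so there is no in-text argument to compare against. What you have written is the classical, self-contained argument from the theory of strongly continuous one-parameter groups (density of the domain via averaging, closedness of the graph via the integral identity $S(t)u-u=\int_{0}^{t}S(s)D_{i,p}u\,ds$), together with the correct observation that the isometric isomorphism $\mathcal{G}_{1}$ intertwines $T(y)$ and $\overline{T}(y)$ and hence transports all four assertions to $\partial_{i,p}$; this is exactly what the cited reference must contain in substance. Two small points worth tidying: the generator is defined by a two-sided limit $t\to 0$, so in the final step of the closedness argument you should observe that the identity $S(t)u-u=\int_{0}^{t}S(s)v\,ds$ holds for $t$ of either sign (which it does, since $S$ is a group), so the difference quotient converges to $v$ from both sides; and the fact that ``$\mathcal{D}_{i,p}$ is a vector subspace'' is the same observation as linearity, which you do make.
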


The next result allows us to see $D_{i,p}$ as a generalization of the usual
partial derivative.

\begin{lemma}[{\protect\cite[Lemma 1]{Deterhom}}]
\label{l2.1}Let $1\leq i\leq N$. If $u\in A^{1}$ then $\varrho (u)\in 
\mathcal{D}_{i,p}$ and 
\begin{equation}
D_{i,p}\varrho (u)=\varrho \left( \frac{\partial u}{\partial y_{i}}\right)
.\ \ \ \ \ \ \ \ \ \ \ \ \ \ \ \ \ \ \ \ \ \ \ \ \ \ \ \ \ \ \ \ \ 
\label{2.2}
\end{equation}
\end{lemma}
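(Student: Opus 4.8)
The plan is to verify directly the definition of the infinitesimal generator $D_{i,p}$ at the point $\varrho (u)$. By the definition of the group $\{T(y)\}$ one has $T(te_{i})\varrho (u)=\varrho (\tau _{te_{i}}u)$, so that for $t\neq 0$ the difference quotient occurring in the definition of $D_{i,p}$ is exactly $\varrho $ applied to the classical difference quotient, namely $\frac{T(te_{i})\varrho (u)-\varrho (u)}{t}=\varrho \!\left( \frac{\tau _{te_{i}}u-u}{t}\right)$. Since $\varrho $ is the canonical surjection of $B_{A}^{p}$ onto $\mathcal{B}_{A}^{p}=B_{A}^{p}/\mathcal{N}$ and the induced norm satisfies $\left\Vert \varrho (v)\right\Vert _{\mathcal{B}_{A}^{p}}=\left\Vert v\right\Vert _{p}$ (the Besicovitch seminorm, whose kernel is precisely $\mathcal{N}$ by property $(\mathbf{1})$), everything reduces to showing that $(\tau _{te_{i}}u-u)/t$ converges to $\partial u/\partial y_{i}$ in the seminorm $\left\Vert \cdot \right\Vert _{p}$ as $t\rightarrow 0$. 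Note first that $u\in A^{1}$ guarantees $u,\partial u/\partial y_{i}\in A$, and by translation invariance $\tau _{te_{i}}u\in A\subset B_{A}^{p}$, so all the functions involved indeed lie in $B_{A}^{p}$ and the seminorm is defined on them.

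The key step is to prove the stronger statement that this convergence already holds in the sup norm. Because $u$ is of class $\mathcal{C}^{1}$, the fundamental theorem of calculus gives, pointwise, $\frac{\tau _{te_{i}}u(y)-u(y)}{t}-\frac{\partial u}{\partial y_{i}}(y)=\frac{1}{t}\int_{0}^{t}\left[ \frac{\partial u}{\partial y_{i}}(y+se_{i})-\frac{\partial u}{\partial y_{i}}(y)\right] ds$. Taking the supremum over $y$ and estimating the integrand for $s$ between $0$ and $t$, one bounds the sup norm of the left-hand side by $\sup_{0\leq \left\vert s\right\vert \leq \left\vert t\right\vert }\left\Vert \tau _{se_{i}}\frac{\partial u}{\partial y_{i}}-\frac{\partial u}{\partial y_{i}}\right\Vert _{\infty }$. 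Since $\partial u/\partial y_{i}\in A\subset \mathrm{BUC}(\mathbb{R}^{N})$ is uniformly continuous, its modulus of continuity tends to $0$, so this quantity vanishes as $t\rightarrow 0$. This is exactly where the hypothesis $u\in A^{1}$ (rather than merely $u\in A$) and the uniform continuity built into an algebra with mean value are used, and it is the only genuinely delicate point of the argument.

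Finally I would pass from the sup norm to the Besicovitch seminorm. For any $v\in A$ one has $\frac{1}{\left\vert B_{r}\right\vert }\int_{B_{r}}\left\vert v\right\vert ^{p}dy\leq \left\Vert v\right\Vert _{\infty }^{p}$ for every $r>0$, whence $\left\Vert v\right\Vert _{p}\leq \left\Vert v\right\Vert _{\infty }$. Applying this to $v=(\tau _{te_{i}}u-u)/t-\partial u/\partial y_{i}$ shows that the sup-norm convergence just established forces $\left\Vert \frac{\tau _{te_{i}}u-u}{t}-\frac{\partial u}{\partial y_{i}}\right\Vert _{p}\rightarrow 0$ as $t\rightarrow 0$. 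Translating back through $\varrho $, this says precisely that $\frac{T(te_{i})\varrho (u)-\varrho (u)}{t}\rightarrow \varrho (\partial u/\partial y_{i})$ in $\mathcal{B}_{A}^{p}$, i.e. the defining limit of $D_{i,p}$ at $\varrho (u)$ exists and equals $\varrho (\partial u/\partial y_{i})$. By the definition of the domain this yields $\varrho (u)\in \mathcal{D}_{i,p}$ together with the formula \eqref{2.2}, completing the argument; everything beyond the second step is bookkeeping relating the three norms $\left\Vert \cdot \right\Vert _{\infty }$, $\left\Vert \cdot \right\Vert _{p}$ and the quotient norm on $\mathcal{B}_{A}^{p}$.
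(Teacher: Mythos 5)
Your proof is correct, and it is the standard argument: the paper itself does not reprove this lemma but cites \cite[Lemma 1]{Deterhom}, whose justification proceeds exactly as you do, reducing the difference quotient via the fundamental theorem of calculus to the uniform continuity of $\partial u/\partial y_{i}\in A\subset \mathrm{BUC}(\mathbb{R}^{N})$ and then passing from $\left\Vert \cdot \right\Vert _{\infty }$ to the Besicovitch seminorm through the elementary bound $\left\Vert v\right\Vert _{p}\leq \left\Vert v\right\Vert _{\infty }$. Your explicit remarks that the difference quotient stays in $A$ by translation invariance and that $\mathcal{N}$ is precisely the null space of $\left\Vert \cdot \right\Vert _{p}$ (so the quotient norm of a class equals the seminorm of any representative) cover the only points that need checking.
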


From (\ref{2.2}) we infer that $D_{i,p}\circ \varrho =\varrho \circ \partial
/\partial y_{i}$, that is, $D_{i,p}$ generalizes the usual partial
derivative $\partial /\partial y_{i}$. One may also define higher order
derivatives by setting $D_{p}^{\alpha }=D_{1,p}^{\alpha _{1}}\circ \cdot
\cdot \cdot \circ D_{N,p}^{\alpha _{N}}$ (resp. $\partial _{p}^{\alpha
}=\partial _{1,p}^{\alpha _{1}}\circ \cdot \cdot \cdot \circ \partial
_{N,p}^{\alpha _{N}}$) for $\alpha =(\alpha _{1},...,\alpha _{N})\in \mathbb{%
N}^{N}$ with $D_{i,p}^{\alpha _{i}}=D_{i,p}\circ \cdot \cdot \cdot \circ
D_{i,p}$, $\alpha _{i}$-times. Now, let 
\begin{equation*}
\mathcal{B}_{A}^{1,p}=\cap _{i=1}^{N}\mathcal{D}_{i,p}=\{u\in \mathcal{B}%
_{A}^{p}:D_{i,p}u\in \mathcal{B}_{A}^{p}\ \forall 1\leq i\leq N\}
\end{equation*}%
and 
\begin{equation*}
\mathcal{D}_{A}(\mathbb{R}^{N})=\{u\in \mathcal{B}_{A}^{\infty }:D_{\infty
}^{\alpha }u\in \mathcal{B}_{A}^{\infty }\ \forall \alpha \in \mathbb{N}%
^{N}\}.
\end{equation*}%
It can be shown that $\mathcal{D}_{A}(\mathbb{R}^{N})$ is dense in $\mathcal{%
B}_{A}^{p}$, $1\leq p<\infty $. We also have that $\mathcal{B}_{A}^{1,p}$ is
a Banach space under the norm 
\begin{equation*}
\left\Vert u\right\Vert _{\mathcal{B}_{A}^{1,p}}=\left( \left\Vert
u\right\Vert _{p}^{p}+\sum_{i=1}^{N}\left\Vert D_{i,p}u\right\Vert
_{p}^{p}\right) ^{1/p}\ \ (u\in \mathcal{B}_{A}^{1,p}).
\end{equation*}

The counter-part of the above properties also holds with 
\begin{equation*}
W^{1,p}(\Delta (A))=\cap _{i=1}^{N}\mathcal{W}_{i,p}\text{\ in place of }%
\mathcal{B}_{A}^{1,p}
\end{equation*}%
and 
\begin{equation*}
\mathcal{D}(\Delta (A))=\{u\in L^{\infty }(\Delta (A)):\partial _{\infty
}^{\alpha }u\in L^{\infty }(\Delta (A))\ \forall \alpha \in \mathbb{N}^{N}\}%
\text{\ in that of }\mathcal{D}_{A}(\mathbb{R}^{N})\text{.}
\end{equation*}%
The following relation between $D_{i,p}$ and $\partial _{i,p}$ holds.

\begin{lemma}[{\protect\cite[Lemma 2]{Deterhom}}]
\label{l2.2}For any $u\in \mathcal{D}_{i,p}$ we have $\mathcal{G}_{1}(u)\in 
\mathcal{W}_{i,p}$ with $\mathcal{G}_{1}(D_{i,p}u)=\partial _{i,p}\mathcal{G}%
_{1}(u)$.
\end{lemma}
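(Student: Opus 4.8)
The plan is to deduce everything from the single algebraic fact that the Gelfand isomorphism $\mathcal{G}_{1}$ intertwines the two groups $\{T(y):y\in \mathbb{R}^{N}\}$ and $\{\overline{T}(y):y\in \mathbb{R}^{N}\}$, and then to push the limit defining the generator $D_{i,p}$ through $\mathcal{G}_{1}$ using only the continuity of $\mathcal{G}_{1}$. Since no converse inclusion or domain characterization is needed, the argument is short.

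First I would record the intertwining identity. By the very definition of $\overline{T}(y)$ we have $\overline{T}(y)\mathcal{G}_{1}(u)=\mathcal{G}_{1}(T(y)u)$ for every $u\in \mathcal{B}_{A}^{p}$ (here, following the convention of Section 2, $u$ denotes an equivalence class). Since $\mathcal{G}_{1}$ maps $\mathcal{B}_{A}^{p}$ onto $L^{p}(\Delta (A))$, this is the operator identity $\overline{T}(y)\circ \mathcal{G}_{1}=\mathcal{G}_{1}\circ T(y)$ on $\mathcal{B}_{A}^{p}$. Specializing to $y=te_{i}$ and exploiting the linearity of $\mathcal{G}_{1}$ yields, for each $t\neq 0$,
\begin{equation*}
\mathcal{G}_{1}\left( \frac{T(te_{i})u-u}{t}\right) =\frac{\overline{T}(te_{i})\mathcal{G}_{1}(u)-\mathcal{G}_{1}(u)}{t}.
\end{equation*}

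Next I would take $u\in \mathcal{D}_{i,p}$. By the definition of the domain of the generator, the difference quotient $(T(te_{i})u-u)/t$ converges in $\mathcal{B}_{A}^{p}$ to $D_{i,p}u$ as $t\rightarrow 0$. Because $\mathcal{G}_{1}$ is an isometric isomorphism, it is in particular continuous, so the left-hand side of the displayed identity converges in $L^{p}(\Delta (A))$ to $\mathcal{G}_{1}(D_{i,p}u)$. Hence the right-hand side converges as well, which is precisely the statement that the limit defining $\partial _{i,p}\mathcal{G}_{1}(u)$ exists in $L^{p}(\Delta (A))$; therefore $\mathcal{G}_{1}(u)\in \mathcal{W}_{i,p}$ and $\partial _{i,p}\mathcal{G}_{1}(u)=\mathcal{G}_{1}(D_{i,p}u)$, as claimed.

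I do not expect any genuine obstacle here: the result is essentially the transport of a generator through a bounded isomorphism, and the only ingredients are the built-in intertwining relation and the boundedness of $\mathcal{G}_{1}$ (it is an isometric isomorphism, as recalled in Section 2). The one point deserving a word of care is that the argument uses continuity of $\mathcal{G}_{1}$ in a single direction only, namely to carry the already-existing limit of the difference quotients into $L^{p}(\Delta (A))$; consequently no control on $\mathcal{G}_{1}^{-1}$ and no reverse inclusion $\mathcal{W}_{i,p}\subset \mathcal{G}_{1}(\mathcal{D}_{i,p})$ is required for the stated one-sided conclusion.
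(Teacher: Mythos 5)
Your argument is correct: the identity $\overline{T}(y)\circ\mathcal{G}_{1}=\mathcal{G}_{1}\circ T(y)$ is built into the definition of $\overline{T}(y)$, and passing the limit of the difference quotients through the bounded (indeed isometric) linear map $\mathcal{G}_{1}$ gives exactly the stated one-sided conclusion. The paper itself does not reprove this lemma but defers to the cited reference, and your transport-of-generator argument is the standard one that the definitions in Section 2 are set up to deliver, so there is nothing to add.
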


Now, let $u\in \mathcal{D}_{i,p}$ ($p\geq 1$, $1\leq i\leq N$). Then the
inequality 
\begin{equation*}
\left\Vert t^{-1}(T(te_{i})u-u)-D_{i,p}u\right\Vert _{1}\leq c\left\Vert
t^{-1}(T(te_{i})u-u)-D_{i,p}u\right\Vert _{p}
\end{equation*}%
for a positive constant $c$ independent of $u$ and $t$, yields $%
D_{i,1}u=D_{i,p}u$, so that $D_{i,p}$ is the restriction to $\mathcal{B}%
_{A}^{p}$ of $D_{i,1}$. Therefore, for all $u\in \mathcal{D}_{i,\infty }$ we
have $u\in \mathcal{D}_{i,p}$ ($p\geq 1$) and $D_{i,\infty }u=D_{i,p}u$ for
all $1\leq i\leq N$. It holds that

\begin{equation*}
\mathcal{D}_{A}(\mathbb{R}^{N})=\varrho (A^{\infty })
\end{equation*}%
and we have the

\begin{proposition}[{\protect\cite[Proposition 4]{Deterhom}}]
\label{p2.2}The following assertions hold.

\begin{itemize}
\item[(i)] $\int_{\Delta (A)}\partial _{\infty }^{\alpha }\widehat{u}d\beta
=0$ for all $u\in \mathcal{D}_{A}(\mathbb{R}^{N})$ and $\alpha \in \mathbb{N}%
^{N}$;

\item[(ii)] $\int_{\Delta (A)}\partial _{i,p}\widehat{u}d\beta =0$ for all $%
u\in \mathcal{D}_{i,p}$ and $1\leq i\leq N$;

\item[(iii)] $D_{i,p}(u\phi )=uD_{i,\infty }\phi +\phi D_{i,p}u$ for all $%
(\phi ,u)\in \mathcal{D}_{A}(\mathbb{R}^{N})\times \mathcal{D}_{i,p}$ and $%
1\leq i\leq N$.
\end{itemize}
\end{proposition}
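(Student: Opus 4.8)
The plan is to establish (ii) first, deduce (i) from it by reduction to a first-order derivative, and then prove the Leibniz identity (iii) directly from the definition of the infinitesimal generator. Throughout I write $\widehat{u}=\mathcal{G}_{1}(u)$. For (ii), I would start from Lemma \ref{l2.2}, which gives $\partial _{i,p}\widehat{u}=\mathcal{G}_{1}(D_{i,p}u)$, together with the identity $\int_{\Delta (A)}\mathcal{G}_{1}(w)\,d\beta =M(w)$ for $w\in \mathcal{B}_{A}^{p}$ (property (2) of Section 2, passed to the quotient). These reduce the assertion to showing $M(D_{i,p}u)=0$. Since $M$ is a continuous linear form on $\mathcal{B}_{A}^{p}$ and $D_{i,p}u=\lim_{t\rightarrow 0}t^{-1}(T(te_{i})u-u)$ in $\mathcal{B}_{A}^{p}$, I would let $M$ pass through the limit to obtain $M(D_{i,p}u)=\lim_{t\rightarrow 0}t^{-1}(M(T(te_{i})u)-M(u))$. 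The translation invariance $M(\tau _{a}v)=M(v)$ forces $M(T(te_{i})u)=M(u)$ for every $t$, so each difference quotient is already zero, and hence so is the limit.

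For (i), I would reduce to a single derivative. Given a nonzero $\alpha $, choose $i$ with $\alpha _{i}\geq 1$ and set $v=D_{\infty }^{\alpha -e_{i}}u$; since $\mathcal{D}_{A}(\mathbb{R}^{N})$ is stable under the operators $D_{\infty }$, one has $v\in \mathcal{D}_{A}(\mathbb{R}^{N})\subset \mathcal{D}_{i,p}$. Iterating Lemma \ref{l2.2} gives $\partial _{\infty }^{\alpha }\widehat{u}=\mathcal{G}_{1}(D_{\infty }^{\alpha }u)=\mathcal{G}_{1}(D_{i,p}v)$, whence $\int_{\Delta (A)}\partial _{\infty }^{\alpha }\widehat{u}\,d\beta =M(D_{i,p}v)$, which vanishes by (ii).

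For (iii), I would argue from the generator. First I would record that $T(y)$ is multiplicative, $T(y)(u\phi )=(T(y)u)(T(y)\phi )$, because it is induced by the translation $\tau _{y}$, an algebra homomorphism, and $\mathcal{N}$ is stable under multiplication by bounded elements, so the product descends to $\mathcal{B}_{A}^{p}$. Writing $u_{t}=T(te_{i})u$ and $\phi _{t}=T(te_{i})\phi $ and splitting $u_{t}\phi _{t}-u\phi =u_{t}(\phi _{t}-\phi )+(u_{t}-u)\phi $, I would divide by $t$ and let $t\rightarrow 0$. The key estimate is $\left\Vert v\psi \right\Vert _{p}\leq \left\Vert \mathcal{G}_{1}(\psi )\right\Vert _{L^{\infty }(\Delta (A))}\left\Vert v\right\Vert _{p}$ for $v\in \mathcal{B}_{A}^{p}$ and $\psi \in \mathcal{B}_{A}^{\infty }$, which is immediate from the multiplicativity of $\mathcal{G}$ and the formula $\left\Vert \cdot \right\Vert _{p}=[M(|\cdot |^{p})]^{1/p}$; it yields joint continuity of the product $\mathcal{B}_{A}^{p}\times \mathcal{B}_{A}^{\infty }\rightarrow \mathcal{B}_{A}^{p}$. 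Combining this with $u_{t}\rightarrow u$ in $\mathcal{B}_{A}^{p}$, with $t^{-1}(\phi _{t}-\phi )\rightarrow D_{i,\infty }\phi $ in $\mathcal{B}_{A}^{\infty }$ (uniform convergence of the difference quotients of a function in $A^{\infty }$, since $\phi \in \varrho (A^{\infty })$), and with $t^{-1}(u_{t}-u)\rightarrow D_{i,p}u$ in $\mathcal{B}_{A}^{p}$, both terms converge, the limit exists, and one reads off $u\phi \in \mathcal{D}_{i,p}$ with $D_{i,p}(u\phi )=uD_{i,\infty }\phi +\phi D_{i,p}u$.

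I expect the only genuine obstacle to lie in (iii): parts (i)--(ii) are short once the translation invariance of $M$ is exploited, whereas in (iii) the two factors live in different spaces, $\mathcal{B}_{A}^{p}$ and $\mathcal{B}_{A}^{\infty }$. The points requiring care are the mixed-norm continuity of multiplication and the verification that $T(y)$ induces a well-defined multiplicative operator on the quotient $\mathcal{B}_{A}^{p}$; with these in hand, the passage to the limit is routine.
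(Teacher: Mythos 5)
The paper does not actually prove this proposition: it is imported from \cite{Deterhom} (``The following results are justified in \cite{Deterhom}. We refer the reader to the above-mentioned paper for their justification.''), so there is no in-paper argument to compare against. Your proof is correct and is the natural one: (ii) follows from Lemma \ref{l2.2} together with $\int_{\Delta (A)}\mathcal{G}_{1}(\cdot )d\beta =M$, the continuity of $M$, and the translation invariance $M(T(te_{i})u)=M(u)$, which kills every difference quotient; (i) reduces to (ii) by peeling off one derivative inside $\mathcal{D}_{A}(\mathbb{R}^{N})$; and (iii) is the Leibniz splitting $u_{t}\phi _{t}-u\phi =u_{t}(\phi _{t}-\phi )+(u_{t}-u)\phi $ closed by the bound $\left\Vert v\psi \right\Vert _{p}\leq \left\Vert \psi \right\Vert _{\infty }\left\Vert v\right\Vert _{p}$ (valid since $\phi $, $D_{i,\infty }\phi $ and the difference quotients of $\phi $ all come from bounded elements of $A^{\infty }$), the isometry $\left\Vert u_{t}\right\Vert _{p}=\left\Vert u\right\Vert _{p}$, and the uniform convergence of the difference quotients of a function in $A^{\infty }$. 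The only caveat is in the statement rather than in your argument: for $\alpha =0$ assertion (i) would read $M(u)=0$, which is false in general, so (i) must be understood for $\alpha \neq 0$ --- exactly the restriction your reduction makes.
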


The formula (iii) in the above proposition leads to the equality 
\begin{equation*}
\int_{\Delta (A)}\widehat{\phi }\partial _{i,p}\widehat{u}d\beta
=-\int_{\Delta (A)}\widehat{u}\partial _{i,\infty }\widehat{\phi }d\beta 
\text{, all }(u,\phi )\in \mathcal{D}_{i,p}\times \mathcal{D}_{A}(\mathbb{R}%
^{N}).
\end{equation*}%
This suggests us to define the concepts of distributions on $A$ and of a
weak derivative. Before we can do that, let us endow $\mathcal{D}_{A}(%
\mathbb{R}^{N})=\varrho (A^{\infty })$ with its natural topology defined by
the family of norms $N_{n}(u)=\sup_{\left\vert \alpha \right\vert \leq
n}\sup_{y\in \mathbb{R}^{N}}\left\vert D_{\infty }^{\alpha }u(y)\right\vert $%
, $n\in \mathbb{N}$. In this topology, $\mathcal{D}_{A}(\mathbb{R}^{N})$ is
a Fr\'{e}chet space. We denote by $\mathcal{D}_{A}^{\prime }(\mathbb{R}^{N})$
the topological dual of $\mathcal{D}_{A}(\mathbb{R}^{N})$. We endow it with
the strong dual topology. The elements of $\mathcal{D}_{A}^{\prime }(\mathbb{%
R}^{N})$ are called \textit{the distributions on }$A$. One can also define
the weak derivative of $f\in \mathcal{D}_{A}^{\prime }(\mathbb{R}^{N})$ as
follows: for any $\alpha \in \mathbb{N}^{N}$, $D^{\alpha }f$ stands for the
distribution defined by the formula 
\begin{equation*}
\left\langle D^{\alpha }f,\phi \right\rangle =(-1)^{\left\vert \alpha
\right\vert }\left\langle f,D_{\infty }^{\alpha }\phi \right\rangle \text{\
for all }\phi \in \mathcal{D}_{A}(\mathbb{R}^{N}).
\end{equation*}%
Since $\mathcal{D}_{A}(\mathbb{R}^{N})$ is dense in $\mathcal{B}_{A}^{p}$ ($%
1\leq p<\infty $), it is immediate that $\mathcal{B}_{A}^{p}\subset \mathcal{%
D}_{A}^{\prime }(\mathbb{R}^{N})$ with continuous embedding, so that one may
define the weak derivative of any $f\in \mathcal{B}_{A}^{p}$, and it
verifies the following functional equation: 
\begin{equation*}
\left\langle D^{\alpha }f,\phi \right\rangle =(-1)^{\left\vert \alpha
\right\vert }\int_{\Delta (A)}\widehat{f}\partial _{\infty }^{\alpha }%
\widehat{\phi }d\beta \text{\ for all }\phi \in \mathcal{D}_{A}(\mathbb{R}%
^{N}).
\end{equation*}%
In particular, for $f\in \mathcal{D}_{i,p}$ we have 
\begin{equation*}
-\int_{\Delta (A)}\widehat{f}\partial _{i,p}\widehat{\phi }d\beta
=\int_{\Delta (A)}\widehat{\phi }\partial _{i,p}\widehat{f}d\beta \ \
\forall \phi \in \mathcal{D}_{A}(\mathbb{R}^{N}),
\end{equation*}%
so that we may identify $D_{i,p}f$ with $D^{\alpha _{i}}f$, $\alpha
_{i}=(\delta _{ij})_{1\leq j\leq N}$. Conversely, if $f\in \mathcal{B}%
_{A}^{p}$ is such that there exists $f_{i}\in \mathcal{B}_{A}^{p}$ with $%
\left\langle D^{\alpha _{i}}f,\phi \right\rangle =-\int_{\Delta (A)}\widehat{%
f}_{i}\widehat{\phi }d\beta $ for all $\phi \in \mathcal{D}_{A}(\mathbb{R}%
^{N})$, then $f\in \mathcal{D}_{i,p}$ and $D_{i,p}f=f_{i}$. We are therefore
justified in saying that $\mathcal{B}_{A}^{1,p}$ is a Banach space under the
norm $\left\Vert \cdot \right\Vert _{\mathcal{B}_{A}^{1,p}}$. The same
result holds for $W^{1,p}(\Delta (A))$. Moreover it is a fact that $\mathcal{%
D}_{A}(\mathbb{R}^{N})$ (resp. $\mathcal{D}(\Delta (A))$) is a dense
subspace of $\mathcal{B}_{A}^{1,p}$ (resp. $W^{1,p}(\Delta (A))$).

We need a further notion. A function $f\in \mathcal{B}_{A}^{1}$ is said to
be \emph{invariant} if for any $y\in \mathbb{R}^{N}$, $T(y)f=f$. It is
immediate that the above notion of invariance is the well-known one relative
to dynamical systems. An algebra with mean value will therefore said to be 
\emph{ergodic} if every invariant function $f$ is constant in $\mathcal{B}%
_{A}^{1}$. As in \cite{BMW} one may show that $f\in \mathcal{B}_{A}^{1}$ is
invariant if and only if $D_{i,1}f=0$ for all $1\leq i\leq N$. We denote by $%
I_{A}^{p}$ the set of $f\in \mathcal{B}_{A}^{p}$ that are invariant. The set 
$I_{A}^{p}$ is a closed vector subspace of $\mathcal{B}_{A}^{p}$ satisfying
the following important property:%
\begin{equation}
f\in I_{A}^{p}\text{ if and only if }D_{i,p}f=0\text{ for all }1\leq i\leq N%
\text{.}  \label{2.5}
\end{equation}%
We therefore endow $\mathcal{B}_{A}^{1,p}/I_{A}^{p}$ with the seminorm 
\begin{equation}
\left\Vert u+I_{A}^{p}\right\Vert _{\#,p}=\left( \sum_{i=1}^{N}\left\Vert
D_{i,p}u\right\Vert _{p}^{p}\right) ^{1/p}\ \ \text{for }u\in \mathcal{B}%
_{A}^{1,p}  \label{2.5'}
\end{equation}%
which, in view of (\ref{2.5}) is actually a norm under which $\mathcal{B}%
_{A}^{1,p}/I_{A}^{p}$ is a reflexive Banach space ($I_{A}^{p}$ is a closed
subspace of $\mathcal{B}_{A}^{p}$).

Let $u\in \mathcal{B}_{A}^{p}$ (resp. $v=(v_{1},...,v_{N})\in (\mathcal{B}%
_{A}^{p})^{N}$). We define the gradient operator $D_{p}u$ and the divergence
operator $\Div_{p}v$ by 
\begin{equation*}
D_{p}u=(D_{1,p}u,...,D_{N,p}u)\text{ and\ }\Div_{p}v=%
\sum_{i=1}^{N}D_{i,p}v_{i}.
\end{equation*}%
We define also the Laplacian operator by $\Delta _{p}u=\Div_{p}(D_{p}u)$,
and it holds that $\Delta _{p}\varrho (u)=\varrho (\Delta _{y}u)$ for all $%
u\in A^{\infty }$, where $\Delta _{y}$ denotes the usual Laplacian operator
on $\mathbb{R}_{y}^{N}$. The following obvious properties are satisfied.

\begin{itemize}
\item[1.] The divergence operator $\Div_{p^{\prime }}$ ($p^{\prime }=p/(p-1)$%
) sends continuously and linearly $(\mathcal{B}_{A}^{p^{\prime }})^{N}$ into 
$(\mathcal{B}_{A}^{1,p})^{\prime }$ and satisfies 
\begin{equation}
\left\langle \Div_{p^{\prime }}u,v\right\rangle =-\left\langle
u,D_{p}v\right\rangle \text{\ for }v\in \mathcal{B}_{A}^{1,p}\text{ and }%
u=(u_{i})\in (\mathcal{B}_{A}^{p^{\prime }})^{N}\text{,}  \label{2.6}
\end{equation}%
where $\left\langle u,D_{p}v\right\rangle =\sum_{i=1}^{N}\int_{\Delta (A)}%
\widehat{u}_{i}\partial _{i,p}\widehat{v}d\beta $.

\item[2.] If in (\ref{2.6}) we take $u=D_{p^{\prime }}w$ with $w\in \mathcal{%
B}_{A}^{p^{\prime }}$ being such that $D_{p^{\prime }}w\in (\mathcal{B}%
_{A}^{p^{\prime }})^{N}$ then we have 
\begin{equation*}
\left\langle \Delta _{p^{\prime }}w,v\right\rangle =\left\langle \Div%
_{p^{\prime }}(D_{p^{\prime }}w),v\right\rangle =-\left\langle D_{p^{\prime
}}w,D_{p}v\right\rangle \text{\ for all }v\in \mathcal{B}_{A}^{1,p}\text{
and }w\in \mathcal{B}_{A}^{1,p^{\prime }}.
\end{equation*}%
If in addition $v=\phi $ with $\phi \in \mathcal{D}_{A}(\mathbb{R}^{N})$
then $\left\langle \Delta _{p^{\prime }}w,\phi \right\rangle =-\left\langle
D_{p^{\prime }}w,D_{p}\phi \right\rangle $, so that, for $p=2$, we get 
\begin{equation}
\left\langle \Delta _{2}w,\phi \right\rangle =\left\langle w,\Delta _{2}\phi
\right\rangle \text{\ for all }w\in \mathcal{B}_{A}^{2}\text{ and }\phi \in 
\mathcal{D}_{A}(\mathbb{R}^{N})\text{.}  \label{2.7}
\end{equation}
\end{itemize}

The same definitions and properties hold true when replacing $\mathcal{B}%
_{A}^{p}$ by $L^{p}(\Delta (A))$, $D_{p}$ by $\partial _{p}$, $\Div_{p}$ by $%
\widehat{\Div}_{p}$ and $\Delta _{p}$ by $\widehat{\Delta }_{p}$, and we
have the following relations between these operators (provided that they
make sense): 
\begin{equation*}
\partial _{p}=\mathcal{G}_{1}\circ D_{p},\ \widehat{\Div}_{p}=\mathcal{G}%
_{1}\circ \Div_{p}\text{ and }\widehat{\Delta }_{p}=\mathcal{G}_{1}\circ
\Delta _{p}.
\end{equation*}

Before we can state one of the most important result of this section, we
still need to make some preliminaries and some notation. To this end let $%
f\in \mathcal{B}_{A}^{p}$. We know that $D^{\alpha _{i}}f$ exists (in the
sense of distributions) and that $D^{\alpha _{i}}f=D_{i,p}f$ if $f\in 
\mathcal{D}_{i,p}$. So we can drop the subscript $p$ and therefore denote $%
D_{i,p}$ (resp. $\partial _{i,p}$) by $\overline{\partial }/\partial y_{i}$
(resp. $\partial _{i}$). Thus, $\overline{D}_{y}\equiv \overline{\nabla }%
_{y} $ will stand for the gradient operator $(\overline{\partial }/\partial
y_{i})_{1\leq i\leq N}$ and $\overline{\Div}_{y}$ for the divergence
operator $\Div_{p}$, with $\mathcal{G}_{1}\circ \overline{\Div}_{y}=\widehat{%
\Div}$. We will also denote $\partial \equiv (\partial _{1},...,\partial
_{N})$. Finally, we shall denote the Laplacian operator on $\mathcal{B}%
_{A}^{p}$ by $\overline{\Delta }_{y}$.

\section{Introverted algebras with mean value}

For useful purposes, we need to characterize the spectrum of an algebra with
mean value $A$. It is known to be the Stone-\v{C}ech compactification of $%
\mathbb{R}^{N}$ provided that $A$ separates the points of $\mathbb{R}^{N}$
as seen below.

\begin{theorem}
\label{th1}Let $A$ be an algebra with mean value. Assume $A$ separates the
points of $\mathbb{R}^{N}$. Then $\Delta (A)$ is the Stone-\v{C}ech
compactification of $\mathbb{R}^{N}$.
\end{theorem}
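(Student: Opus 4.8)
The plan is to exhibit the canonical evaluation map $j:\mathbb{R}^{N}\rightarrow \Delta (A)$, $j(x)=\delta _{x}$, where $\delta _{x}$ is the functional $u\mapsto u(x)$, to realize $\mathbb{R}^{N}$ as a dense subspace of the compact (Hausdorff) space $\Delta (A)$, and then to pin down $\Delta (A)$ as the Stone-\v{C}ech compactification via the universal property. First I would check that $j$ is well defined: each $\delta _{x}$ is linear, multiplicative, and nonzero (it sends the constant $1$ to $1$), hence belongs to $\Delta (A)$. Continuity of $j$ for the weak$\ast $ (Gelfand) topology is immediate, since for every fixed $u\in A$ the map $x\mapsto \langle \delta _{x},u\rangle =u(x)$ is continuous. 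Injectivity is precisely where the separation hypothesis enters: if $x\neq x^{\prime }$, then some $u\in A$ satisfies $u(x)\neq u(x^{\prime })$, so $\delta _{x}\neq \delta _{x^{\prime }}$.

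Next I would prove that $j(\mathbb{R}^{N})$ is dense in $\Delta (A)$. Suppose it were not, and pick $s_{0}\in \Delta (A)\setminus \overline{j(\mathbb{R}^{N})}$. Since $\Delta (A)$ is compact Hausdorff, Urysohn's lemma yields $g\in \mathcal{C}(\Delta (A))$ with $g(s_{0})=1$ and $g\equiv 0$ on $\overline{j(\mathbb{R}^{N})}$. Because the Gelfand transformation $\mathcal{G}$ maps $A$ \emph{onto} $\mathcal{C}(\Delta (A))$, we may write $g=\mathcal{G}(u)$ for some $u\in A$; then $u(x)=\mathcal{G}(u)(\delta _{x})=0$ for every $x\in \mathbb{R}^{N}$, so $u=0$, contradicting $g(s_{0})=1$. (Equivalently, this is Stone--Weierstrass applied to the separating, conjugation-closed, unital subalgebra $\mathcal{G}(A)=\mathcal{C}(\Delta (A))$.) I would then argue that $j$ is a homeomorphism onto its image, i.e. that the topology $\mathbb{R}^{N}$ inherits from $\Delta (A)$ coincides with the Euclidean one. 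The $A$-weak topology is automatically coarser than the Euclidean topology, so the content here is the reverse inclusion: every Euclidean neighbourhood of $x$ should contain a basic neighbourhood of the form $\{y:|u_{i}(y)-u_{i}(x)|<\varepsilon ,\ i\le k\}$ with $u_{i}\in A$. This step uses that $A$ separates points from closed sets, not merely points from points.

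The hard part will be identifying $\Delta (A)$ with the Stone-\v{C}ech compactification through its universal property: a Hausdorff compactification $cX$ of a Tychonoff space $X$ is (equivalent to) $\beta X$ exactly when every bounded continuous real function on $X$ extends continuously to $cX$; equivalently, when any two disjoint zero-sets of $X$ have disjoint closures in $cX$. Under $\mathcal{G}$ and $j$, the restriction map $\mathcal{C}(\Delta (A))\rightarrow \mathrm{BUC}(\mathbb{R}^{N})$ has image exactly $A$, so verifying this property amounts to showing that this extension/restriction mechanism reaches \emph{all} bounded continuous functions on $\mathbb{R}^{N}$. This is the crux of the argument and the place where the full strength of the hypotheses must be deployed; I would carry it out by checking the disjoint-zero-set criterion directly on $\Delta (A)$, using density of $j(\mathbb{R}^{N})$ together with the separation and regularity properties of $A$ established in the previous step, and the compactness of $\Delta (A)$ to promote the pointwise extension of a given bounded continuous function to a genuine continuous function on $\Delta (A)$.
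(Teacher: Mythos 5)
Your first two paragraphs reproduce the paper's proof almost verbatim: the evaluation map $y\mapsto\delta_{y}$, its weak$\ast$ continuity, injectivity from the separation hypothesis, and density of the image via Urysohn's lemma plus surjectivity of the Gelfand transformation onto $\mathcal{C}(\Delta(A))$. You are also right, and more careful than the paper, that injectivity and continuity alone do not make $\delta$ a homeomorphism onto its image (a continuous bijection from a non-compact space need not be an embedding); one needs $A$ to separate points from closed sets, a point the paper's proof silently skips.

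The genuine gap is your third paragraph, and it is a gap you correctly diagnose but cannot close: the universal property of $\beta\mathbb{R}^{N}$ requires that \emph{every} bounded continuous function on $\mathbb{R}^{N}$ extend continuously to the compactification. By the Gelfand isomorphism and the density of $\{\delta_{y}\}$, the functions on $\mathbb{R}^{N}$ that extend continuously to $\Delta(A)$ are exactly the restrictions of $\mathcal{C}(\Delta(A))$, i.e.\ exactly the members of $A$. Since $A\subset\mathrm{BUC}(\mathbb{R}^{N})$, which is a proper subalgebra of $\mathcal{C}_{b}(\mathbb{R}^{N})$ (e.g.\ $x\mapsto\sin(x^{2})$ is bounded and continuous but not uniformly continuous, hence not in $A$), the extension property and the disjoint-zero-set criterion both fail, and no amount of deploying the hypotheses will rescue them. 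You should be aware that the paper's own proof stops precisely where your second paragraph stops and simply declares the pair $(\Delta(A),\delta)$ to be the Stone--\v{C}ech compactification; it never attempts the step you flag as the crux. So what you have identified is really a defect in the theorem as literally stated (or in the paper's use of the term): what is actually proved, by you and by the author alike, is that $(\Delta(A),\delta)$ is \emph{a} Hausdorff compactification of $\mathbb{R}^{N}$ to which every element of $A$ extends --- the compactification generated by $A$ --- not $\beta\mathbb{R}^{N}$ in the standard sense. If that weaker statement is the intended one, your first two paragraphs (with the embedding point made precise) already constitute a complete proof.
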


\begin{proof}
For each $y\in \mathbb{R}^{N}$ let us define $\delta _{y}$, the Dirac mass
at $y$ by setting $\delta _{y}(u)=u(y)$ for $u\in A$. Then the mapping $%
\delta :y\mapsto \delta _{y}$, of $\mathbb{R}^{N}$ into $\Delta (A)$, is
continuous and has dense range. In fact as the topology in $\Delta (A)$ is
the weak$\ast $ one and further the mappings $y\mapsto \delta _{y}(u)=u(y)$, 
$u\in A$, are continuous on $\mathbb{R}^{N}$, it follows that $\delta $ is
continuous. Now assuming that $\delta (\mathbb{R}^{N})$ is not dense in $%
\Delta (A)$ we derive the existence of a non empty open subset $U$ of $%
\Delta (A)$ such that $U\cap \delta (\mathbb{R}^{N})=\emptyset $. Then by
Urysohn's lemma there exists $v\in \mathcal{C}(\Delta (A))$ with $v\neq 0$
and $\left. v\right\vert _{\Delta (A))\backslash U}=0$ where $\left.
v\right\vert _{\Delta (A))\backslash U}$ denotes the restriction of $v$ to $%
\Delta (A)\backslash U$. By the Gelfand representation theorem, $v=\mathcal{G%
}(u)$ for some $u\in A$. But then 
\begin{equation*}
u(y)=\delta _{y}(u)=\mathcal{G}(u)(\delta _{y})=v(\delta _{y})=0
\end{equation*}%
for all $y\in \mathbb{R}^{N}$, contradicting $u\neq 0$. Thus $\delta (%
\mathbb{R}^{N})$ is dense in $\Delta (A)$.

Next, every $f$ in $A$ (viewed as element of $\mathcal{B}(\mathbb{R}^{N})$)
extends continuously to $\Delta (A)$ in the sense that there exists $%
\widehat{f}\in \mathcal{C}(\Delta (A))$ such that $\widehat{f}(\delta
_{y})=f(y)$ for all $y\in \mathbb{R}^{N}$ (just take $\widehat{f}=\mathcal{G}%
(f)$). Finally assume that $A$ separates the points of $\mathbb{R}^{N}$.
Then the mapping $\delta :\mathbb{R}^{N}\rightarrow \delta (\mathbb{R}^{N})$
is a homeomorphism. In fact, we only need to prove that $\delta $ is
injective. For that, let $y,z\in \mathbb{R}^{N}$ with $y\neq z$; since $A$
separates the points of $\mathbb{R}^{N}$, there exists a function $u\in A$
such that $u(y)\neq u(z)$, hence $\delta _{y}\neq \delta _{z}$, and our
claim is justified. We therefore conclude that the couple $(\Delta
(A),\delta )$ is the Stone-\v{C}ech compactification of $\mathbb{R}^{N}$.
\end{proof}

Before dealing with the general situation, let us begin with a summary of
the facts and notation we shall use in connection with the semigroup theory.

A \emph{semigroup} is a set supplied with an associative binary operation
that will be referred to as multiplication. Let $S$ be a semigroup with
multiplication $S\times S\rightarrow S$, $(x,y)\mapsto xy$. If $S$ is a
topological space, the multiplication is said to be \emph{separately
continuous} if the maps $x\mapsto xy$ and $x\mapsto yx$ (for each fixed $y$
in $S$) of $S$ into itself, are continuous. The multiplication in $S$ is
said to be \emph{jointly continuous} if the map $(x,y)\mapsto xy$ is
continuous.

Let $S$ be a semigroup with and identity $e$ ($ex=xe=x$, all $x\in S$). $S$
is said to be a \emph{topological semigroup} if $S$ is a Hausdorff
topological space in which the multiplication is separately continuous.
There is a vast literature on topological semigroups with the stronger
assumption that the multiplication is jointly continuous; see e.g. \cite%
{Glicksberg1, WA}. Although our results do work in that special setting, it
is not necessary for us to consider such kind of semigroups for some obvious
reasons that shall be given later (see e.g. Remark \ref{re3}).

Now, let $A$ be an algebra with mean value on $\mathbb{R}^{N}$. For $\mu \in
\Delta (A)$ and $f\in A$ we define the function $T_{\mu }f$ by $T_{\mu
}f(x)=\mu (\tau _{x}f)$, all $x\in \mathbb{R}^{N}$, where $\tau
_{x}f=f(\cdot +x)$. Then since $A$ is translation invariant, $T_{\mu }f$ is
well defined as an element of BUC$(\mathbb{R}^{N})$. This defines a bounded
linear operator $T_{\mu }:A\rightarrow $BUC$(\mathbb{R}^{N})$. The algebra $A
$ is said to be \emph{introverted} if $T_{\mu }(A)\subset A$ for each $\mu
\in \Delta (A)$. This concept is due to T. Mitchell \cite[p. 121]{TM} (in
which he rather uses the symbol $M$-introverted instead of merely
introverted) and has applications to fixed points theorems \cite{TM}. Here
below is the characterization of the concept of introversion of an algebra
with mean value $A$.

\begin{lemma}
\label{le1}Let $A$ be an algebra with mean value on $\mathbb{R}^{N}$. Then $%
A $ is introverted if and only if for any $f\in A$, the pointwise closure in 
\emph{BUC}$(\mathbb{R}^{N})$ of the orbit $\{\tau _{a}f:a\in \mathbb{R}%
^{N}\} $ of $f$ is included in $A$.
\end{lemma}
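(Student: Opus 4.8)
The plan is to show that for each $f\in A$ the pointwise closure in BUC$(\mathbb{R}^{N})$ of the orbit $\mathcal{O}(f)=\{\tau _{a}f:a\in \mathbb{R}^{N}\}$ coincides exactly with the set $\{T_{\mu }f:\mu \in \Delta (A)\}$. Once this identification is in hand, the lemma is immediate: introversion says precisely that $T_{\mu }f\in A$ for every $\mu \in \Delta (A)$ and every $f\in A$, which, through the identification, is the same as requiring the pointwise closure of $\mathcal{O}(f)$ to sit inside $A$ for every $f$.

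Two elementary facts will drive the argument. First, each Dirac mass $\delta _{y}$ ($y\in \mathbb{R}^{N}$) lies in $\Delta (A)$, and a one-line computation gives $T_{\delta _{y}}f(x)=\delta _{y}(\tau _{x}f)=(\tau _{x}f)(y)=f(x+y)=\tau _{y}f(x)$, so $T_{\delta _{y}}f=\tau _{y}f$; thus the translates are exactly the operators indexed by Dirac masses. Second, as recorded in the proof of Theorem \ref{th1}, the set $\delta (\mathbb{R}^{N})=\{\delta _{y}:y\in \mathbb{R}^{N}\}$ is weak$\ast $ dense in $\Delta (A)$ (that part of the proof does not use the separation hypothesis). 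I will also use that $\mathcal{O}(f)$ is bounded by $\left\Vert f\right\Vert _{\infty }$ and equicontinuous, with the modulus of continuity of $f$, uniformly in $a$, since $f\in $ BUC$(\mathbb{R}^{N})$; consequently every pointwise limit of elements of $\mathcal{O}(f)$ is automatically bounded and uniformly continuous, so the pointwise closure taken in BUC$(\mathbb{R}^{N})$ is the full pointwise closure.

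For the inclusion $\{T_{\mu }f:\mu \in \Delta (A)\}\subset \overline{\mathcal{O}(f)}$ I would fix $\mu \in \Delta (A)$ and, by density, pick a net $(\delta _{y_{\alpha }})$ with $\delta _{y_{\alpha }}\to \mu $ weak$\ast $. Since $\tau _{x}f\in A$ for each $x$, weak$\ast $ convergence yields, for every fixed $x$,
\begin{equation*}
T_{\mu }f(x)=\mu (\tau _{x}f)=\lim_{\alpha }\delta _{y_{\alpha }}(\tau _{x}f)=\lim_{\alpha }f(x+y_{\alpha })=\lim_{\alpha }\tau _{y_{\alpha }}f(x),
\end{equation*}
so $T_{\mu }f$ is the pointwise limit of the net $(\tau _{y_{\alpha }}f)\subset \mathcal{O}(f)$, i.e. $T_{\mu }f\in \overline{\mathcal{O}(f)}$.

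The reverse inclusion is where the only genuine care is needed. Given $g\in \overline{\mathcal{O}(f)}$, choose a net $(\tau _{y_{\alpha }}f)$ converging pointwise to $g$. The associated net $(\delta _{y_{\alpha }})$ lies in the weak$\ast $ compact space $\Delta (A)$, hence admits a subnet $(\delta _{y_{\alpha _{\beta }}})$ converging weak$\ast $ to some $\mu \in \Delta (A)$, while the corresponding subnet of $(\tau _{y_{\alpha }}f)$ still converges pointwise to $g$. Evaluating as above along this subnet gives $g(x)=\lim_{\beta }f(x+y_{\alpha _{\beta }})=\mu (\tau _{x}f)=T_{\mu }f(x)$ for every $x$, whence $g=T_{\mu }f$ with $\mu \in \Delta (A)$. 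This passage to a subnet---so that one and the same $\mu $ simultaneously realises the spectral limit and the pointwise limit---is the main (and essentially the only) obstacle, and it is overcome purely by the compactness of $\Delta (A)$. Combining the two inclusions gives $\overline{\mathcal{O}(f)}=\{T_{\mu }f:\mu \in \Delta (A)\}$, and the asserted equivalence follows.
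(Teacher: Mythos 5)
Your proof is correct and is essentially the paper's own argument, merely repackaged: the inclusion $\{T_{\mu }f:\mu \in \Delta (A)\}\subset \overline{\mathcal{O}(f)}$ via density of the Dirac masses is exactly the paper's converse direction, and the reverse inclusion via weak$\ast$ compactness of $\Delta (A)$ (passing to a subnet so that the spectral and pointwise limits are realised by the same $\mu$) is exactly the paper's forward direction. Nothing further is needed.
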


\begin{proof}
Assume that $A$ is introverted. Let $g$ be in the pointwise closure of the
orbit of $f$ (where $f$ is fixed in $A$). Then there is a net $(a_{\nu
})_{\nu }\subset \mathbb{R}^{N}$ such that $\tau _{a_{\nu }}f\rightarrow g$
pointwise. $\{\delta _{a_{\nu }}\}_{\nu }$ is a net in the compact space $%
\Delta (A)$, so let $\mu \in \Delta (A)$ be such that a subnet $\{\delta
_{a_{\eta }}\}_{\eta }$ of $\{\delta _{a_{\nu }}\}_{\nu }$ converges weak$%
\ast $ to $\mu $ in $\Delta (A)$. For any $x\in \mathbb{R}^{N}$, $\tau
_{a_{\eta }}f(x)=\delta _{a_{\eta }}(\tau _{x}f)\rightarrow \mu (\tau _{x}f)$%
; thus $g(x)=\mu (\tau _{x}f)$, and since $A$ is introverted, $g\in A$,
i.e., the pointwise closure of the orbit of $f$ is included in $A$.

Conversely, let $\mu \in \Delta (A)$ and let $f\in A$. Define $g(x)=\mu
(\tau _{x}f)$, $x\in \mathbb{R}^{N}$. Finally let $(a_{\nu })_{\nu }\subset 
\mathbb{R}^{N}$ be a net such that $\delta _{a_{\nu }}\rightarrow \mu $ weak$%
\ast $ in $\Delta (A)$; then $\tau _{a_{\nu }}f(x)=\delta _{a_{\nu }}(\tau
_{x}f)\rightarrow g(x)$, hence $\tau _{a_{\nu }}f\rightarrow g$ pointwise,
i.e. $g$ belongs to the pointwise closure of the orbit of $f$. Whence $g\in
A $, and so $A$ is introverted.
\end{proof}

As a consequence of the preceding lemma, some examples of introverted
algebras are quoted below.

\begin{corollary}
\label{co1}The algebras: $\mathcal{B}_{\infty }(\mathbb{R}^{N})$ of
functions that are finite at infinity, \emph{AP}$(\mathbb{R}^{N})$ of almost
periodic functions, and \emph{WAP}$(\mathbb{R}^{N})$ of weakly almost
periodic functions are introverted.
\end{corollary}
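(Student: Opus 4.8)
The plan is to verify the hypothesis of Lemma \ref{le1} for each of the three algebras: for a fixed $f$ in the algebra $A$, I must show that every function $g$ lying in the pointwise closure (in BUC$(\mathbb{R}^{N})$) of the orbit $O(f)=\{\tau _{a}f:a\in \mathbb{R}^{N}\}$ again belongs to $A$. In every case such a $g$ arises as a pointwise limit $\tau _{a_{\nu }}f\rightarrow g$ of a net, and the common mechanism will be to pass to a subnet that converges in a \emph{stronger} topology whose limit is forced to lie in $A$; since convergence in that stronger topology entails pointwise convergence, the stronger limit must coincide with $g$, yielding $g\in A$. The only elementary fact needed repeatedly is that each point evaluation $\delta _{x}$ is a continuous linear functional on BUC$(\mathbb{R}^{N})$, so that both norm and weak convergence imply pointwise convergence. (Each of the three spaces is a standard example of an algebra with mean value, so Lemma \ref{le1} does apply.)

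For $\mathcal{B}_{\infty }(\mathbb{R}^{N})$ — the functions possessing a finite limit $\ell =\lim_{\left\vert x\right\vert \rightarrow \infty }f(x)$ — I would argue directly by a dichotomy on the index net $(a_{\nu })$. Either $(a_{\nu })$ admits a bounded subnet, in which case a further subnet converges to some $a_{0}\in \mathbb{R}^{N}$ and, by the uniform continuity of $f$, $\tau _{a_{\nu }}f\rightarrow \tau _{a_{0}}f$ uniformly along it; hence $g=\tau _{a_{0}}f$, which lies in $\mathcal{B}_{\infty }(\mathbb{R}^{N})$ by translation invariance. Or else $(a_{\nu })$ has no bounded subnet, i.e. $\left\vert a_{\nu }\right\vert \rightarrow \infty $; then $\left\vert x+a_{\nu }\right\vert \rightarrow \infty $ for each fixed $x$, so $f(x+a_{\nu })\rightarrow \ell $ and $g\equiv \ell $ is constant, hence again in $\mathcal{B}_{\infty }(\mathbb{R}^{N})$. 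Either way $g\in \mathcal{B}_{\infty }(\mathbb{R}^{N})$, so Lemma \ref{le1} gives introversion.

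For AP$(\mathbb{R}^{N})$ I would invoke Bochner's criterion: $f$ is almost periodic if and only if $O(f)$ is relatively compact in the sup-norm topology of BUC$(\mathbb{R}^{N})$. Given $\tau _{a_{\nu }}f\rightarrow g$ pointwise, relative norm-compactness yields a subnet converging in sup norm to some $h$ in the norm closure of $O(f)$; since AP$(\mathbb{R}^{N})$ is norm-closed and contains $O(f)$, we get $h\in $ AP$(\mathbb{R}^{N})$, and uniform convergence forces $h=g$, so $g\in $ AP$(\mathbb{R}^{N})$. For WAP$(\mathbb{R}^{N})$ the same scheme runs with the weak topology replacing the norm topology, using the Eberlein characterization that $f$ is weakly almost periodic iff $O(f)$ is relatively weakly compact in BUC$(\mathbb{R}^{N})$: a pointwise-convergent net $\tau _{a_{\nu }}f\rightarrow g$ then has a subnet converging weakly to some $h$ in the weak closure $W=\overline{O(f)}^{\,w}$, and weak convergence implies pointwise convergence, so $h=g$.

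The step I expect to be the crux is verifying that $W\subseteq $ WAP$(\mathbb{R}^{N})$, which is where the underlying semigroup structure enters. Each translation operator $R_{a}\phi =\tau _{a}\phi $ is a linear isometry of BUC$(\mathbb{R}^{N})$, hence weak-to-weak continuous, and clearly $R_{a}(O(f))\subseteq O(f)$; by continuity $R_{a}(W)\subseteq \overline{R_{a}(O(f))}^{\,w}\subseteq W$. Consequently, for any $h\in W$ the orbit $O(h)=\{R_{a}h:a\in \mathbb{R}^{N}\}$ is contained in the weakly compact set $W$ and is therefore relatively weakly compact, so Eberlein's criterion gives $h\in $ WAP$(\mathbb{R}^{N})$. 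This settles $g\in $ WAP$(\mathbb{R}^{N})$ and completes the verification of the hypothesis of Lemma \ref{le1} for all three algebras. The remaining cases are either entirely elementary (the net dichotomy for $\mathcal{B}_{\infty }(\mathbb{R}^{N})$) or immediate from norm-closedness (AP$(\mathbb{R}^{N})$), so the invariance of the weak closure under translations is the only substantive point.
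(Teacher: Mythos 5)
Your proposal is correct, but it takes a genuinely different route from the paper. The paper proves a single unified statement: any norm\nobreakdash-closed translation-invariant subalgebra $A$ of WAP$(\mathbb{R}^{N})$ is introverted, because the orbit $\{\tau _{a_{\nu }}f\}$ is weakly relatively compact (Eberlein), so a subnet of $\tau _{a_{\nu }}f\rightarrow g$ converges weakly, the weak limit must equal $g$ (evaluations are continuous functionals), and $g\in A$ since a norm-closed subspace is weakly closed; the three cases then follow at once from the fact (cited to Eberlein) that $\mathcal{B}_{\infty }(\mathbb{R}^{N})$ and AP$(\mathbb{R}^{N})$ are closed subalgebras of WAP$(\mathbb{R}^{N})$. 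You instead treat the three algebras separately: an elementary bounded/unbounded dichotomy on the translating net for $\mathcal{B}_{\infty }(\mathbb{R}^{N})$, Bochner's norm-compactness criterion for AP$(\mathbb{R}^{N})$, and, for WAP$(\mathbb{R}^{N})$, the weak compactness of $W=\overline{O(f)}^{\,w}$ together with its invariance under the (weak-weak continuous) translations to conclude $W\subseteq $ WAP$(\mathbb{R}^{N})$ via Eberlein's criterion. Your argument is sound throughout (the net dichotomy is legitimate: a net that does not tend to infinity has a cofinal bounded subnet), and it buys self-containedness --- you never need the inclusions $\mathcal{B}_{\infty },\mathrm{AP}\subset \mathrm{WAP}$ nor Mazur's theorem that norm-closed subspaces are weakly closed. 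What the paper's route buys is brevity and extra generality: its single argument establishes introversion for \emph{every} closed translation-invariant subalgebra of WAP$(\mathbb{R}^{N})$, a fact the paper leans on again (e.g.\ in the discussion surrounding Theorem \ref{th3}), whereas your case-by-case treatment yields only the three named algebras.
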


\begin{proof}
That these algebras are algebras with mean value is well known; see e.g. 
\cite{CMP, NA}. $\mathcal{B}_{\infty }(\mathbb{R}^{N})$ and \textrm{AP}$(%
\mathbb{R}^{N})$ are both closed subalgebras of \textrm{WAP}$(\mathbb{R}%
^{N}) $ (see \cite{Eberlein}). Thus, to see that $\mathcal{B}_{\infty }(%
\mathbb{R}^{N})$, \textrm{AP}$(\mathbb{R}^{N})$ and \textrm{WAP}$(\mathbb{R}%
^{N})$ are introverted, it is sufficient to show that any closed translation
invariant subalgebra $A$ of \textrm{WAP}$(\mathbb{R}^{N})$ is introverted.
To this end, let $f\in A$, and let $g$ lying in the pointwise closure of the
orbit of $f$. There is a net $(a_{\nu })_{\nu }\subset \mathbb{R}^{N}$ such
that $\tau _{a_{\nu }}f\rightarrow g$ pointwise. But the set $\{\tau
_{a_{\nu }}f:\nu \}$ is weakly relatively compact, hence by passing
eventually to a subnet, we have $\tau _{a_{\nu }}f\rightarrow g$ weakly in
BUC$(\mathbb{R}^{N})$. It readily follows that $g$, viewed as the weak limit
of a net in $A$, belongs to $A$ since $A$ is weakly closed. As a result of
Lemma \ref{le1}, $A$ is introverted.
\end{proof}

We are now in a position to establish the structure theorem for the spectrum
of an algebra with mean value. This result and its corollaries are the basic
results on topological semigroups for the applications made in the following
sections.

\begin{theorem}
\label{th2}Let $A$ be an algebra with mean value on $\mathbb{R}^{N}$. Assume
that $A$ is introverted. Then its spectrum $\Delta (A)$ is a compact
topological semigroup. Moreover if the multiplication in $\Delta (A)$ is
jointly continuous then $\Delta (A)$ is a compact topological group.
\end{theorem}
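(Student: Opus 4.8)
The plan is to equip $\Delta(A)$ with a multiplication coming from the introversion operators $T_\mu$ and verify the semigroup axioms, using the density of the Dirac masses $\delta_y$ (established in the proof of Theorem~\ref{th1}) to reduce identities to pointwise statements on $\mathbb{R}^N$. First I would define the product of $\mu,\nu\in\Delta(A)$ by its action on $f\in A$: set $(\mu\nu)(f)=\mu(T_\nu f)$, where $T_\nu f(x)=\nu(\tau_x f)$. Introversion guarantees $T_\nu f\in A$, so $\mu(T_\nu f)$ makes sense, and one checks directly that $\mu\nu$ is linear, multiplicative, and nonzero (it sends the constant function $1$ to $1$), hence lands in $\Delta(A)$. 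The natural way to motivate this is to observe that on Dirac masses it reproduces addition in $\mathbb{R}^N$: $\delta_a\delta_b=\delta_{a+b}$, since $(\delta_a\delta_b)(f)=\delta_a(T_{\delta_b}f)=T_{\delta_b}f(a)=\delta_b(\tau_a f)=f(a+b)$.

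Next I would verify associativity. The clean approach is to check it first on Dirac masses, where it is just associativity of addition in $\mathbb{R}^N$, and then extend by continuity and density. Concretely, for fixed $f\in A$ the maps $\mu\mapsto(\mu\nu)(f)$ and so on are weak$\ast$ continuous, and since $\delta(\mathbb{R}^N)$ is dense in $\Delta(A)$, an identity that holds whenever the factors are Dirac masses and that is separately continuous in each factor must hold for all of $\Delta(A)$. The same density argument identifies $\delta_0$ as the two-sided identity $e$. This is the most delicate bookkeeping, because the multiplication need only be \emph{separately} continuous, so I would be careful to move one variable at a time and to confirm that for fixed $\nu$ and $f$ the function $\mu\mapsto\mu(T_\nu f)$ is weak$\ast$ continuous (immediate, since $T_\nu f\in A$) and that for fixed $\mu$ and $f$ the function $\nu\mapsto\mu(T_\nu f)$ is weak$\ast$ continuous.

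The separate continuity just mentioned is exactly what makes $\Delta(A)$ a topological semigroup: the left translation $\nu\mapsto\mu\nu$ is continuous because $T_\nu f\in A$ and $\mu$ is weak$\ast$ continuous on $A$, while the right translation $\mu\mapsto\mu\nu$ is continuous because it is evaluation of the fixed function $T_\nu f\in A$. The main obstacle I anticipate is precisely the continuity of $\nu\mapsto T_\nu f$ in the appropriate sense: one must show that $\nu\mapsto\mu(T_\nu f)$ is weak$\ast$ continuous, which requires controlling $T_\nu f$ as $\nu$ varies, and this is where introversion (guaranteeing $T_\nu f\in A$ rather than merely in $\mathrm{BUC}(\mathbb{R}^N)$) does the essential work. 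Combined with the already-known compactness of $\Delta(A)$, this yields the first assertion.

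Finally, for the last sentence, suppose the multiplication is jointly continuous. Then $\Delta(A)$ is a compact jointly continuous semigroup with identity, and I would invoke the classical fact that a compact Hausdorff semigroup with jointly continuous multiplication and a two-sided identity, in which moreover every element generates (via the closure of its powers and translates) a group structure, is in fact a topological group; the standard route is to show that for each $\mu$ the closed subsemigroup it generates contains an idempotent, that joint continuity forces this idempotent to be the identity $e$, and hence that every $\mu$ has a two-sided inverse with continuous inversion. This promotes the compact topological semigroup to a compact topological group, completing the proof.
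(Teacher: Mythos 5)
Your setup (the Arens-type product $\mu\nu(f)=\mu(T_{\nu}f)$, the identity $\delta_{0}$, the computation $\delta_{a}\delta_{b}=\delta_{a+b}$, and the use of density of the Dirac masses) matches the paper's, and your "easy" direction of separate continuity is right: for fixed $\nu$, the map $\mu\mapsto\mu\nu$ is weak$\ast$ continuous because it is evaluation of $\mu$ at the fixed element $T_{\nu}f\in A$. But the other direction is a genuine gap. You assert that $\nu\mapsto\mu\nu$ is continuous "because $T_{\nu}f\in A$ and $\mu$ is weak$\ast$ continuous on $A$," and then you yourself flag that one must control $\nu\mapsto T_{\nu}f$ and say that introversion "does the essential work" --- but introversion only guarantees that $T_{\nu}f$ \emph{lands} in $A$; it says nothing about how $T_{\nu}f$ varies with $\nu$. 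Weak$\ast$ convergence $\nu_{i}\to\nu$ gives only pointwise convergence $T_{\nu_{i}}f\to T_{\nu}f$ on $\mathbb{R}^{N}$, and $\mu$ is a norm-continuous functional, so $\mu(T_{\nu_{i}}f)\to\mu(T_{\nu}f)$ does not follow; this is exactly the failure of Arens regularity in general. The paper closes this by showing the product coincides with the convolution $\mu\ast\nu(f)=\int f(x+y)\,d\nu(y)\,d\mu(x)$ and invoking its \emph{commutativity} (a Fubini-type interchange cited from Glicksberg, ultimately resting on weak almost periodicity), which reduces the hard variable to the easy one. Some argument of this kind is indispensable and is missing from your proposal.

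The group step also contains a false claim. A compact Hausdorff semigroup with jointly continuous multiplication and a two-sided identity need not be a group, and joint continuity does \emph{not} force the idempotent in the closed subsemigroup generated by $\mu$ to be the identity: $([0,1],\times)$ has identity $1$, yet the subsemigroup generated by $0$ has idempotent $0\neq 1$. What actually saves the day here is the density of the already-invertible elements $\delta_{y}$: given $\mu$, take a net $\delta_{y_{i}}\to\mu$, extract a weak$\ast$ cluster point $\nu$ of $(\delta_{-y_{i}})$, and pass to the limit in $\delta_{y_{i(j)}}\delta_{-y_{i(j)}}=\delta_{0}$ using joint continuity to get $\mu\nu=\delta_{0}$; equivalently, joint continuity makes the group of units closed, and it contains the dense set $\delta(\mathbb{R}^{N})$. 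Your proposal never uses $\delta(\mathbb{R}^{N})$ at this stage, so the last assertion of the theorem is not established as written.
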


\begin{proof}
1. Let us first define the Arens product (see e.g. \cite{Day, Lau1}) on $%
\Delta (A)$. For $f\in A$ and $x\in \mathbb{R}^{N}$, we know that the
translate $\tau _{x}f$ (defined by $\tau _{x}f(y)=f(x+y)$ for $y\in \mathbb{R%
}^{N}$) is in $A$. Thus, for $\nu \in \Delta (A)$ we can define 
\begin{equation*}
T_{\nu }f(x)=\nu (\tau _{x}f)\text{\ \ }(x\in \mathbb{R}^{N}).
\end{equation*}%
The operator $T_{\nu }$ is nonnegative, sends the constant function $1$ into
itself and, since $A$ is introverted, maps $A$ into $A$. We therefore find
that, if $\mu \in \Delta (A)$, the product $\mu \nu $ determined by 
\begin{equation*}
\mu \nu (f)=\mu (T_{\nu }f)\ \ \ (f\in A)
\end{equation*}%
is well defined as and element of $\Delta (A)$. That it is associative is
straightforward. Moreover denoting by $\delta $ the Dirac mass at the origin 
$0$, it holds that $\delta \nu =\nu \delta =\nu $ for all $\nu \in \Delta (A)
$, so that $\delta $ is a unity in $\Delta (A)$ for our product. Finally,
for the separate continuity, fix $\nu $ in $\Delta (A)$ and let us check
that $\mu \mapsto \mu \nu $ is continuous. Before we can do this, let us
however observe that $\mu \nu $ is actually the convolution between $\mu $
and $\nu $ defined by 
\begin{equation*}
\mu \ast \nu (f)=\int f(x+y)d\nu (y)d\mu (x)\ \ \ \ (f\in A).
\end{equation*}%
With this in mind, we follow some arguments of \cite{Siebert} (see in
particular Proposition 3.1 therein). Let $(\mu _{i})_{i\in I}$ be a net in $%
\Delta (A)$ converging weak$\ast $ to $\mu $ in $\Delta (A)$. Then the
tensor product $(\mu _{i}\otimes \nu )_{i\in I}$ is a net in $\Delta
(A)\times \Delta (A)$, hence possesses a weak$\ast $ cluster point $\lambda
\in \Delta (A)\times \Delta (A)$. Let $(\mu _{i(j)}\otimes \nu )_{j\in J}$
be a subnet of $(\mu _{i}\otimes \nu )_{i\in I}$ converging weak$\ast $ to $%
\lambda $. Then if for $f,g\in A$ the function $f\times g$ is defined by $%
(f\times g)(x,y)=f(x)g(y)$ ($x,y\in \mathbb{R}^{N}$), it follows that 
\begin{equation*}
\lambda (f\times g)=\lim_{j\in J}(\mu _{i(j)}\otimes \nu )(f\times
g)=\lim_{j\in J}\mu _{i(j)}(f)\nu (g)=\mu (f)\nu (g)=(\mu \otimes \nu
)(f\times g).
\end{equation*}%
Hence by the definition of product measure we must have $\lambda =\mu
\otimes \nu $. This shows that $\lim_{i\in I}(\mu _{i}\otimes \nu )=\mu
\otimes \nu $. Since the map $\alpha \otimes \beta \mapsto \alpha \ast \beta 
$ of $\Delta (A)\times \Delta (A)$ into $\Delta (A)$ is continuous (by
definition of convolution) we finally have $\lim_{i\in I}\mu _{i}\nu =\mu
\nu $. The continuity of $\mu \mapsto \mu \nu $ follows thereby.

The fact that $\mu \mapsto \nu \mu $ is continuous is a consequence of the
commutativity of the product as seen below (see e.g. \cite[Theorem 3.1]%
{Glicksberg1}): 
\begin{equation*}
\int f(x+y)d\nu (y)d\mu (x)=\int f(y+x)d\mu (y)d\nu (x)\text{, all }f\text{
in }A.
\end{equation*}

2. Now assume that the multiplication is jointly continuous. Taking into
account the first part of the proof, we see that it is sufficient to check
that any $\mu \in \Delta (A)$ is invertible. It is clear that if $\mu
=\delta _{y}$ for some $y\in \mathbb{R}^{N}$, then $\nu =\delta _{-y}$ is
the inverse of $\mu $, since $\mu \nu =\nu \mu =\delta $. Now, assume that $%
\mu $ is arbitrary, and let $(y_{i})_{i\in I}\subset \mathbb{R}^{N}$ be a
net such that $\delta _{y_{i}}\rightarrow \mu $ in $\Delta (A)$-weak$\ast $. 
$(\delta _{-y_{i}})_{i\in I}$ is a net in $\Delta (A)$, hence possesses a
weak$\ast $ cluster point $\nu \in \Delta (A)$ which is the limit of a
subnet $(\delta _{-y_{i(j)}})_{i\in J}$ of $(\delta _{-y_{i}})_{i\in I}$.
Invoking both the continuity of the multiplication and the identity $\delta
_{-y_{i(j)}}\delta _{y_{i(j)}}=\delta $, we are led (after passing to the
limit) to $\mu \nu =\delta $. The uniqueness of $\nu $ is a consequence of
the commutativity of the multiplication. Thus $\Delta (A)$ is a group. Since
a compact topological semigroup that is a group must be a topological group
(see \cite[Theorem 2.1]{DG}), it readily follows that $\Delta (A)$ is a
compact topological group. This concludes the proof.
\end{proof}

Let $A$ be an introverted algebra with mean value. Then its spectrum is a
compact topological semigroup. We may therefore define the translation
operator on $\mathcal{C}(\Delta (A))$ as follows. For $f\in \mathcal{C}%
(\Delta (A))$ and $r\in \Delta (A)$, $\tau _{r}f(s)=f(sr)$ \ ($s\in \Delta
(A)$). The following holds true.

\begin{corollary}
\label{co2}Let $A$ be an introverted algebra with mean value on $\mathbb{R}%
^{N}$. The mapping $\varphi :\mathbb{R}^{N}\rightarrow \Delta (A)$ defined
by $\varphi (y)=\delta _{y}$ is a continuous homomorphism with dense range.
Moreover it holds that 
\begin{equation}
\widehat{\tau _{y}u}=\tau _{\varphi (y)}\widehat{u}\text{, all }u\in A\text{
and all }y\in \mathbb{R}^{N}  \label{111}
\end{equation}%
where $\widehat{\cdot }$ denotes the Gelfand transformation on $A$.
\end{corollary}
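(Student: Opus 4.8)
The plan is to verify the three assertions about $\varphi$—continuity, density of range, and the homomorphism property—essentially by unwinding the definitions, and then to establish the intertwining identity (\ref{111}) by a direct computation with the Arens product introduced in the proof of Theorem \ref{th2}.

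For continuity and density of the range I would simply invoke the two properties of the map $\delta:y\mapsto\delta_{y}$ already established in the proof of Theorem \ref{th1}, since neither uses introversion. Continuity holds because the topology on $\Delta(A)$ is the weak$\ast$ one and each evaluation $y\mapsto\delta_{y}(u)=u(y)$ (with $u\in A$) is continuous on $\mathbb{R}^{N}$, the elements of $A$ being uniformly continuous. Density of $\varphi(\mathbb{R}^{N})$ is obtained by the Urysohn/Gelfand argument: were it not dense, one would find $v=\mathcal{G}(u)\in\mathcal{C}(\Delta(A))$ with $v\neq0$ vanishing on all $\delta_{y}$, forcing $u(y)=\delta_{y}(u)=v(\delta_{y})=0$ for every $y$ and hence $u=0$, a contradiction.

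Next I would check the homomorphism property $\varphi(y+z)=\varphi(y)\varphi(z)$ by computing the Arens product of two Dirac masses directly. For $f\in A$ one has $T_{\delta_{z}}f(x)=\delta_{z}(\tau_{x}f)=(\tau_{x}f)(z)=f(x+z)$, whence $(\delta_{y}\delta_{z})(f)=\delta_{y}(T_{\delta_{z}}f)=(T_{\delta_{z}}f)(y)=f(y+z)=\delta_{y+z}(f)$. Thus $\delta_{y}\delta_{z}=\delta_{y+z}$, i.e. $\varphi$ is a homomorphism of $(\mathbb{R}^{N},+)$ into $(\Delta(A),\cdot)$.

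Finally, for (\ref{111}) I would evaluate both sides at an arbitrary $s\in\Delta(A)$. Recalling that $\widehat{u}(\lambda)=\lambda(u)$ for $\lambda\in\Delta(A)$ and that the translation on $\mathcal{C}(\Delta(A))$ is $\tau_{r}\widehat{u}(s)=\widehat{u}(sr)$, the right-hand side reads $\tau_{\varphi(y)}\widehat{u}(s)=\widehat{u}(s\delta_{y})=(s\delta_{y})(u)$. Using the same computation of the Arens product, $T_{\delta_{y}}u(x)=\delta_{y}(\tau_{x}u)=u(x+y)=(\tau_{y}u)(x)$, so $T_{\delta_{y}}u=\tau_{y}u$ and hence $(s\delta_{y})(u)=s(T_{\delta_{y}}u)=s(\tau_{y}u)=\widehat{\tau_{y}u}(s)$, which is the left-hand side; as $s$ is arbitrary, (\ref{111}) follows. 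I do not anticipate a genuine obstacle here: the only point requiring care is bookkeeping—keeping the order of multiplication in the Arens product straight and correctly matching the translation $\tau_{\varphi(y)}$ on $\mathcal{C}(\Delta(A))$ with $\tau_{y}$ on $A$ through $\widehat{\cdot}$—and since the product on $\Delta(A)$ was shown to be commutative in the proof of Theorem \ref{th2}, I need not distinguish left from right translates.
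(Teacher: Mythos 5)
Your proof is correct. For the continuity and density of the range you reuse the arguments from Theorem \ref{th1} (which indeed do not use the separation hypothesis), and your verification of the homomorphism property via the Arens product, $(\delta_{y}\delta_{z})(f)=\delta_{y}(T_{\delta_{z}}f)=f(y+z)=\delta_{y+z}(f)$, is exactly the computation behind the equality the paper declares ``obvious.'' The one place where you genuinely diverge is (\ref{111}): the paper evaluates both sides only at the Dirac masses $\delta_{z}$, where everything reduces to the identity $u(y+z)=u(y+z)$, and then extends to all of $\Delta(A)$ by invoking the density of $\{\delta_{z}\}$ together with the continuity of both sides. You instead evaluate at an arbitrary $s\in\Delta(A)$, using the observation $T_{\delta_{y}}u=\tau_{y}u$ to get $\tau_{\varphi(y)}\widehat{u}(s)=(s\delta_{y})(u)=s(T_{\delta_{y}}u)=s(\tau_{y}u)=\widehat{\tau_{y}u}(s)$ directly. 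Your route is purely algebraic and dispenses with the density-and-continuity step entirely, at the modest cost of computing $T_{\delta_{y}}u$ explicitly; the paper's route needs only the product of two Dirac masses but must then appeal to a topological extension argument. Both are sound, and yours is arguably the cleaner of the two for this particular identity.
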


\begin{proof}
For the continuity of $\varphi $, let $(y_{n})_{n}\subset \mathbb{R}^{N}$ be
a net such that $y_{n}\rightarrow y$; then for any $u\in A$, $%
u(y_{n})\rightarrow u(y)$, hence $\delta _{y_{n}}\rightarrow \delta _{y}$ in
the weak$\ast $ topology of $A^{\prime }$. This proves the continuity of $%
\varphi $. The fact that $\varphi $ is a homomorphism just comes from the
obvious equality $\delta _{x+y}=\delta _{x}\ast \delta _{y}\equiv \delta
_{x}\delta _{y}$ for all $x,y\in \mathbb{R}^{N}$.

Let us check (\ref{111}). Let $y,z\in \mathbb{R}^{N}$ and $u\in A$; then 
\begin{eqnarray*}
\widehat{\tau _{y}u}(\delta _{z}) &=&\delta _{z}(\tau _{y}u)=u(y+z)=\delta
_{y+z}(u)=\widehat{u}(\delta _{y}\delta _{z})=\widehat{u}(\varphi (y)\delta
_{z}) \\
&=&\tau _{\varphi (y)}\widehat{u}(\delta _{z})
\end{eqnarray*}%
Using the continuity of $\varphi $ and the density of $\{\delta _{z}:z\in 
\mathbb{R}^{N}\}$ in $\Delta (A)$ we infer (\ref{111}).
\end{proof}

\begin{remark}
\label{re1}\emph{It follows from (\ref{111}) (in Corollary \ref{co2}) that }$%
A$\emph{\ has an invariant mean if and only if }$\mathcal{C}(\Delta (A))$%
\emph{\ does.}
\end{remark}

The following result is of independent interest. It characterizes in terms
of the weakly almost periodic functions, the algebras with mean value that
are introverted.

\begin{theorem}
\label{th3}Let $A$ be an introverted algebra with mean value on $\mathbb{R}%
^{N}$. Then

\begin{itemize}
\item[(i)] $A$ is a subalgebra of the algebra of weakly almost periodic
functions.

\item[(ii)] If the multiplication in $\Delta (A)$ is jointly continuous,
then $A$ is a subalgebra of the almost periodic functions.
\end{itemize}
\end{theorem}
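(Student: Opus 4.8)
The plan is to prove (i) via the classical double-limit criterion of Grothendieck, which characterizes weak almost periodicity: a function $f\in$ BUC$(\mathbb{R}^{N})$ belongs to WAP$(\mathbb{R}^{N})$ if and only if, for every pair of sequences $(a_{n})_{n}$ and $(b_{m})_{m}$ in $\mathbb{R}^{N}$, the two iterated limits $\lim_{n}\lim_{m}f(a_{n}+b_{m})$ and $\lim_{m}\lim_{n}f(a_{n}+b_{m})$ coincide whenever both exist. Since WAP$(\mathbb{R}^{N})$ is a subalgebra of BUC$(\mathbb{R}^{N})$ and each $f\in A$ lies in BUC$(\mathbb{R}^{N})$, it suffices to verify this criterion for an arbitrary fixed $f\in A$.

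So fix $f\in A$ and two sequences $(a_{n}),(b_{m})\subset\mathbb{R}^{N}$ for which both iterated limits exist. By compactness of $\Delta(A)$ I would pass to subnets so that $\delta_{a_{n}}\to\mu$ and $\delta_{b_{m}}\to\nu$ weak$\ast$ in $\Delta(A)$ for some $\mu,\nu\in\Delta(A)$; because the relevant scalar limits along the full sequences exist by hypothesis, they are unaffected by this passage. Writing $f(a_{n}+b_{m})=\delta_{b_{m}}(\tau_{a_{n}}f)$ and letting $m\to\infty$ gives $\lim_{m}f(a_{n}+b_{m})=\nu(\tau_{a_{n}}f)=T_{\nu}f(a_{n})$; crucially, introversion (Lemma \ref{le1}, via $T_{\nu}(A)\subset A$) guarantees $T_{\nu}f\in A$, so $\delta_{a_{n}}(T_{\nu}f)\to\mu(T_{\nu}f)$ and hence $\lim_{n}\lim_{m}f(a_{n}+b_{m})=\mu(T_{\nu}f)=(\mu\nu)(f)$. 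Reversing the roles of the two sequences in exactly the same way, and again invoking $T_{\mu}f\in A$, yields $\lim_{m}\lim_{n}f(a_{n}+b_{m})=(\nu\mu)(f)$. The two iterated limits therefore agree because the Arens product on $\Delta(A)$ is commutative, as established in the proof of Theorem \ref{th2}. This proves (i).

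For (ii), assume the multiplication on $\Delta(A)$ is jointly continuous, so that by Theorem \ref{th2} the spectrum $\Delta(A)$ is a compact topological \emph{group}. I would use that $u\in A$ is almost periodic exactly when its orbit $\{\tau_{y}u:y\in\mathbb{R}^{N}\}$ is relatively compact for the sup-norm in BUC$(\mathbb{R}^{N})$. On a compact group with jointly continuous multiplication the translation action on $\mathcal{C}(\Delta(A))$ is norm-continuous: since $\widehat{u}=\mathcal{G}(u)$ is uniformly continuous on the compact group $\Delta(A)$, the map $r\mapsto\tau_{r}\widehat{u}$ (with $\tau_{r}\widehat{u}(s)=\widehat{u}(sr)$) is continuous from $\Delta(A)$ into $(\mathcal{C}(\Delta(A)),\|\cdot\|_{\infty})$, so the orbit $\{\tau_{r}\widehat{u}:r\in\Delta(A)\}$ is the continuous image of a compact set and hence norm-compact. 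By Corollary \ref{co2} and relation (\ref{111}), $\widehat{\tau_{y}u}=\tau_{\varphi(y)}\widehat{u}=\tau_{\delta_{y}}\widehat{u}$, so $\mathcal{G}$ carries $\{\tau_{y}u:y\in\mathbb{R}^{N}\}$ into the norm-compact set $\{\tau_{r}\widehat{u}:r\in\Delta(A)\}$. Since $\mathcal{G}$ is an isometric isomorphism of $A$ onto $\mathcal{C}(\Delta(A))$, relative norm-compactness transfers back, so the orbit of $u$ is relatively compact in BUC$(\mathbb{R}^{N})$; that is, $u$ is almost periodic, and $A\subset$ AP$(\mathbb{R}^{N})$.

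I expect the main obstacle to be the careful net/subnet bookkeeping in (i): one must select a single subnet of $(\delta_{b_{m}})$ (respectively $(\delta_{a_{n}})$) serving all indices simultaneously, and reconcile subnet limits in the possibly non-metrizable space $\Delta(A)$ with the genuine sequential iterated limits furnished by Grothendieck's hypothesis — the identification of each iterated limit with $(\mu\nu)(f)$ and $(\nu\mu)(f)$ rests on this, together with the two applications of introversion that keep $T_{\nu}f$ and $T_{\mu}f$ inside $A$. The commutativity of the Arens product, already available from Theorem \ref{th2}, is then what closes the argument.
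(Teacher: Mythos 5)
Your argument is correct, and for part (ii) it is essentially the paper's own proof: both reduce to showing that $r\mapsto\tau_{r}\widehat{u}$ is norm-continuous on the compact group $\Delta(A)$ (the paper runs the finite-cover argument explicitly where you invoke uniform continuity, which is the same thing), and both transfer norm-compactness of the orbit back through the isometry $\mathcal{G}$. For part (i) you take a genuinely different route. The paper does not touch the double-limit criterion: it observes that separate continuity of the multiplication makes $s\mapsto\tau_{s}\widehat{u}$ continuous into $\mathcal{C}(\Delta(A))$ with the pointwise topology, so the orbit is pointwise compact, and then applies Grothendieck's theorem that pointwise and weak compactness coincide on bounded subsets of $\mathcal{C}(X)$ for $X$ compact. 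You instead verify Grothendieck's double-limit criterion directly, identifying the two iterated limits with $(\mu\nu)(f)$ and $(\nu\mu)(f)$ via introversion, and closing with commutativity of the Arens product. Your version is more hands-on and makes the role of introversion completely explicit (it is exactly what keeps $T_{\nu}f$ and $T_{\mu}f$ inside $A$ so the outer limits can be evaluated); the paper's version is shorter once Grothendieck's compactness theorem is granted. One caveat you should be aware of: the identity $\mu(T_{\nu}f)=\nu(T_{\mu}f)$ is precisely the interchange of iterated limits that the double-limit criterion is testing, so in your formulation the entire content of (i) is concentrated in the commutativity of the Arens product. You are entitled to cite it from the proof of Theorem \ref{th2}, and the paper's own proof of (i) also depends on it (separate continuity in the second variable is derived there from commutativity), so your proof is no weaker than the paper's; but neither argument stands if that commutativity claim is not independently secured, and it would be worth saying so explicitly rather than treating it as a routine fact.
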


\begin{proof}
The proof is modeled on the one of Deleeuw and Glicksberg \cite[Theorem 2.7]%
{DG}.

Let $\mathcal{G}^{-1}:\mathcal{C}(\Delta (A))\rightarrow A$ be the inverse
mapping of the Gelfand transformation on $A$. We know from (\ref{111}) (in
Corollary \ref{co2}) that 
\begin{equation*}
\tau _{\varphi (y)}f=\tau _{y}\mathcal{G}^{-1}(f)\text{, all }f\in \mathcal{C%
}(\Delta (A))\text{ and all }y\in \mathbb{R}^{N}.
\end{equation*}%
thus for each $u\in A$, $\{\tau _{y}u:y\in \mathbb{R}^{N}\}$ is the
continuous image of the set $\{\tau _{\varphi (y)}\widehat{u}:y\in \mathbb{R}%
^{N}\}$. Therefore, in order to prove the theorem, it suffices to check (i)
and (ii) hold for $f\in \mathcal{C}(\Delta (A))$ instead of $A$, for if the
set $\mathcal{O}(\widehat{u})=\{\tau _{s}\widehat{u}:s\in \Delta (A)\}$ is
weakly relatively compact in $\mathcal{C}(\Delta (A))$, then so is $\{\tau
_{\varphi (y)}\widehat{u}:y\in \mathbb{R}^{N}\}$ as a subset of $\mathcal{O}(%
\widehat{u})$, and hence $\{\tau _{y}u:y\in \mathbb{R}^{N}\}$ is also weakly
relatively compact in BUC$(\mathbb{R}^{N})$.

Let us now verify (i) and (ii) hold in $\mathcal{C}(\Delta (A))$.

(i) We use a result of Grothendieck \cite{Gro} stating that weak compactness
and compactness in the topology of pointwise convergence agree on bounded
subsets of $\mathcal{C}(X)$ for $X$ compact. Bearing this in mind, let $f\in 
\mathcal{C}(\Delta (A))$. Invoking the separate continuity of the
multiplication in $\Delta (A)$, we get that the mapping $s\mapsto \tau _{s}f$
from $\Delta (A)$ into $\mathcal{C}(\Delta (A))$ is continuous when $%
\mathcal{C}(\Delta (A))$ is taken in the topology of pointwise convergence.
Thus $\mathcal{O}(f)$ is compact in that topology as the continuous image of
the compact set $\Delta (A)$. We infer from the above mentioned result of
Grothendieck that $\mathcal{O}(f)$ is weakly compact.

(ii) It is sufficient to show that the mapping $s\mapsto \tau _{s}f$ is
strongly continuous, for if it is strongly continuous, then $\mathcal{O}(f)$
will be strongly compact as the strongly continuous image of $\Delta (A)$.
So, let us check the strong continuity of that mapping. The function $\Delta
(A)\times \Delta (A)\rightarrow \mathbb{C}$ defined by $(s,r)\mapsto f(sr)$
is continuous. Thus, given $\varepsilon >0$, for a fixed $r_{0}\in \Delta
(A) $ and for any $s\in \Delta (A)$, there is a neighborhood $V_{s}\times
W_{s}$ of $(s,r_{0})$ in $\Delta (A)\times \Delta (A)$ such that 
\begin{equation*}
\left\vert f(sr)-f(sr_{0})\right\vert <\varepsilon \text{\ for each }%
(s,r)\in V_{s}\times W_{s}.
\end{equation*}%
$\Delta (A)$ being compact, we can cover it by finitely many $V_{s}$, say $%
(V_{s_{i}})_{1\leq i\leq n}$. Set $W=\cap _{1\leq i\leq n}W_{s_{i}}$, a
neighborhood of $r_{0}$ which satisfies 
\begin{equation*}
\left\vert f(sr)-f(sr_{0})\right\vert <\varepsilon \text{\ for any }r\in W%
\text{ and all }s\in \Delta (A).
\end{equation*}%
This shows the continuity at $r_{0}$.
\end{proof}

\begin{corollary}
\label{co3}Let $A$ be an algebra with mean value. Then $\Delta (A)$ is a
group if and only if $A$ is a closed subalgebra of the almost periodic
functions.
\end{corollary}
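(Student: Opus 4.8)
The plan is to prove the two implications separately, using Theorem \ref{th2} and Theorem \ref{th3} as the bridge between the group structure of $\Delta (A)$ and the almost periodicity of $A$; the whole game is to control joint versus separate continuity of the Arens product.

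First I would treat the implication from almost periodicity to the group structure. Suppose $A$ is a closed subalgebra of $\mathrm{AP}(\mathbb{R}^{N})$. Being an algebra with mean value, $A$ is translation invariant, hence a closed translation invariant subalgebra of $\mathrm{WAP}(\mathbb{R}^{N})$; by the argument in the proof of Corollary \ref{co1} it is therefore introverted, so Theorem \ref{th2} makes $\Delta (A)$ a compact topological semigroup under the Arens product. The task is then to upgrade separate continuity to joint continuity. For each $u\in A\subset \mathrm{AP}(\mathbb{R}^{N})$ the orbit $\{\tau _{a}u:a\in \mathbb{R}^{N}\}$ is relatively compact in the sup-norm of $\mathrm{BUC}(\mathbb{R}^{N})$ (Bochner's characterization of almost periodicity). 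Transporting this through the isometry $\mathcal{G}$ and the identity $\widehat{\tau _{y}u}=\tau _{\varphi (y)}\widehat{u}$ of Corollary \ref{co2}, the set $\{\tau _{\varphi (y)}\widehat{u}:y\in \mathbb{R}^{N}\}$ is relatively norm compact in $\mathcal{C}(\Delta (A))$; since $\varphi (\mathbb{R}^{N})$ is dense and $s\mapsto \tau _{s}\widehat{u}$ is pointwise continuous by separate continuity, a subnet argument (every subnet has a further subnet norm-converging, necessarily to $\tau _{s}\widehat{u}$ by uniqueness of pointwise limits) shows that the full orbit $\mathcal{O}(\widehat{u})$ is relatively norm compact and that $r\mapsto \tau _{r}f$ is norm continuous for every $f\in \mathcal{C}(\Delta (A))=\mathcal{G}(A)$.

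Joint continuity of the multiplication then follows from a short estimate: if $s_{i}\rightarrow s$ and $r_{i}\rightarrow r$ in $\Delta (A)$, then
\[
\left\vert f(s_{i}r_{i})-f(sr)\right\vert \leq \left\Vert \tau _{r_{i}}f-\tau _{r}f\right\Vert _{\infty }+\left\vert (\tau _{r}f)(s_{i})-(\tau _{r}f)(s)\right\vert ,
\]
where the first term tends to $0$ by norm continuity of $r\mapsto \tau _{r}f$ and the second by separate continuity. With joint continuity established, the second assertion of Theorem \ref{th2} gives that $\Delta (A)$ is a compact topological group, in particular a group.

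Conversely, suppose $\Delta (A)$ is a group. The Arens product being defined presupposes that $A$ is introverted, so by Theorem \ref{th2} the compact Hausdorff space $\Delta (A)$ is a topological semigroup with separately continuous multiplication that happens to be a group; by the automatic-continuity result already invoked in the proof of Theorem \ref{th2} (a compact topological semigroup that is a group is a topological group, \cite[Theorem 2.1]{DG}) the multiplication is in fact jointly continuous. Theorem \ref{th3}(ii) then yields that $A$ is a subalgebra of the almost periodic functions, and it is closed by hypothesis, which completes both implications. I expect the only delicate step to be the transfer of norm compactness of orbits from $\mathrm{BUC}(\mathbb{R}^{N})$ to $\mathcal{C}(\Delta (A))$ together with the passage from the dense set $\varphi (\mathbb{R}^{N})$ to all of $\Delta (A)$; the remaining ingredients are either a direct citation or the routine continuity estimate above.
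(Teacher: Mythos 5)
Your proof is correct and follows essentially the same route as the paper: the converse direction is exactly the paper's argument (group $\Rightarrow$ joint continuity via the Deleeuw--Glicksberg theorem already cited in the proof of Theorem \ref{th2} $\Rightarrow$ Theorem \ref{th3}(ii)). The only difference is that you supply a full proof, via Bochner's criterion and the transfer of norm-compact orbits through $\mathcal{G}$ and the density of $\varphi (\mathbb{R}^{N})$, of the forward implication that the paper simply dismisses as classically known; that argument is sound.
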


\begin{proof}
It is classically known that if $A$ is a closed subalgebra of the algebra of
almost periodic functions, then $\Delta (A)$ is a topological group.
Conversely, assuming $\Delta (A)$ to be a group, the multiplication is
jointly continuous and so, by part (ii) of Theorem \ref{th3}, $A$ is a
closed subalgebra of the algebra of almost periodic functions.
\end{proof}

\begin{remark}
\label{re3}\emph{Let }$A=\mathcal{B}_{\infty }(\mathbb{R}^{N})$\emph{, the
algebra of those function in BUC}$(\mathbb{R}^{N})$\emph{\ that converge at
infinity. Then the multiplication in }$\Delta (A)$\emph{\ is not jointly
continuous. Indeed, if we denote by }$\mathcal{C}_{0}(\mathbb{R}^{N})$\emph{%
\ the Banach space of functions vanishing at infinity, then }$\mathcal{C}%
_{0}(\mathbb{R}^{N})\subset A$\emph{. Now, let }$y_{n}=(n,...,n)\in \mathbb{N%
}^{N}$\emph{\ (}$\mathbb{N}$\emph{\ the nonnegative integers). Take }$\mu
_{n}=\delta _{y_{n}}$\emph{\ the Dirac mass at }$y_{n}$\emph{, and define }$%
\mu _{-n}=\delta _{-y_{n}}$\emph{. We have, for any }$f\in \mathcal{C}_{0}(%
\mathbb{R}^{N})$\emph{, }$\mu _{n}(f)=f(y_{n})\rightarrow 0$\emph{\ and }$%
\mu _{-n}(f)\rightarrow 0$\emph{\ as }$n\rightarrow \infty $\emph{, while }$%
\mu _{n}\mu _{-n}(f)=f(0)$\emph{. Assuming the multiplication to be
continuous, we must have }$f(0)=0$\emph{\ for each }$f\in \mathcal{C}_{0}(%
\mathbb{R}^{N})$\emph{, which is not true.}
\end{remark}

\begin{remark}
\label{re4}\emph{By virtue of [part (i) of] Theorem \ref{th3}, the
introversion of the algebra with mean value }$A$\emph{\ entails its
ergodicity since any subalgebra of the algebra of weakly almost periodic
functions is ergodic; see e.g. \cite{NA, CMP}. We assume here that }$N=1$%
\emph{. Let }$A$\emph{\ be the algebra generated by the function }$f(y)=\cos 
\sqrt[3]{y}$\emph{\ (}$y\in \mathbb{R}$\emph{) and all its translates }$%
f(\cdot +a)$\emph{, }$a\in \mathbb{R}$\emph{. It is known that }$A$\emph{\
is an algebra with mean value which is not ergodic; see \cite[p. 243]{Jikov}
for details. It follows from Theorem \ref{th3} that }$A$\emph{\ is not
introverted.}
\end{remark}

Let $A$ be an introverted algebra with mean value. For any $s\in \Delta (A)$%
, $s\Delta (A)$ is an ideal of $\Delta (A)$ in the sense that for any $r\in
\Delta (A)$ and each $\mu \in \Delta (A)$, $r(s\mu )\in \Delta (A)$ since $%
r(s\mu )=sr\mu $. Now, set 
\begin{equation*}
K(\Delta (A))=\underset{s\in \Delta (A)}{\cap }s\Delta (A).
\end{equation*}%
Then $K(\Delta (A))$ is not empty In fact $sr\Delta (A)\subset s\Delta
(A)\cap r\Delta (A)$, so that the family $\{s\Delta (A):s\in \Delta (A)\}$
has the finite intersection property while $s\Delta (A)$ is trivially
closed. Invoking the compactness of $\Delta (A)$ we get that $K(\Delta (A))$
is non empty $K(\Delta (A))$ is trivially the smallest ideal of $\Delta (A)$
and is called the \emph{kernel} of $\Delta (A)$.

The following result provides us with the structure of $K(\Delta (A))$.

\begin{theorem}
\label{th4}Let $A$ be an introverted algebra with mean value on $\mathbb{R}%
^{N}$. Then

\begin{itemize}
\item[(i)] $K(\Delta (A))$ is a compact topological group.

\item[(ii)] The mean value $M$ on $A$ can be identified as the Haar integral
over $K(\Delta (A))$.
\end{itemize}
\end{theorem}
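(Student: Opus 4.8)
The plan is to treat the two assertions separately, using throughout that, by Theorem \ref{th2}, $\Delta (A)$ is a commutative compact topological semigroup under the (separately continuous) convolution product, with identity $\delta $, and that $K=K(\Delta (A))$ is its smallest (closed) ideal. For part (i) I would first show that $K$ is algebraically a group and then promote this to a topological group. Since $K$ is an ideal it is a compact subsemigroup, and the key observation is that for every $s\in K$ the set $sK$ is again an ideal of $\Delta (A)$: for $r\in \Delta (A)$ and $s\mu \in sK$ one has $r(s\mu )=s(r\mu )\in sK$ because $r\mu \in K$. As $sK$ is a nonempty ideal contained in the minimal ideal $K$, minimality forces $sK=K$ for every $s\in K$. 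From this I extract an identity: picking $s_{0}\in K$ there is $e\in K$ with $s_{0}e=s_{0}$, and since likewise $Ks_{0}=K$, every $t\in K$ is of the form $t=ws_{0}$, whence $te=ws_{0}e=ws_{0}=t$; by commutativity $e$ is a two-sided identity of $K$. Invertibility is then immediate, since $sK=K\ni e$ yields $s'\in K$ with $ss'=e$. Thus $K$ is a compact Hausdorff semigroup whose multiplication is separately continuous and which is algebraically a group; invoking once more the result that such a semigroup is automatically a topological group (\cite[Theorem 2.1]{DG}, already used in Theorem \ref{th2}), I conclude that $K$ is a compact topological group.

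For part (ii) the idea is to realize the normalized Haar measure of $K$ as the pushforward of the $M$-measure $\beta $ under a canonical retraction onto $K$, and then to identify the integral against $\beta $ with the integral against Haar measure. Recalling that $M(u)=\int_{\Delta (A)}\widehat{u}\,d\beta $ and that $M$ is translation invariant, the relation $\widehat{\tau _{y}u}=\tau _{\varphi (y)}\widehat{u}$ of Corollary \ref{co2} gives $\int_{\Delta (A)}\tau _{\delta _{y}}f\,d\beta =\int_{\Delta (A)}f\,d\beta $ for every $f\in \mathcal{C}(\Delta (A))$ (recall that $\mathcal{G}$ maps $A$ onto $\mathcal{C}(\Delta (A))$) and every $y\in \mathbb{R}^{N}$. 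Since $\{\delta _{y}:y\in \mathbb{R}^{N}\}$ is dense in $\Delta (A)$ and, by separate continuity of the product, $r\mapsto \tau _{r}f$ is continuous for the topology of pointwise convergence, the dominated convergence theorem upgrades this to full right invariance:
\[
\int_{\Delta (A)}f(sr)\,d\beta (s)=\int_{\Delta (A)}f(s)\,d\beta (s)\qquad (r\in \Delta (A),\ f\in \mathcal{C}(\Delta (A))).
\]

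Let $e$ be the identity of $K$ produced in part (i). Because $e\in K$ one has $e\Delta (A)=K$ (again by minimality, $e\Delta (A)$ being a nonempty closed ideal contained in $K$), so $\pi (s)=es$ defines a continuous retraction $\pi :\Delta (A)\rightarrow K$ fixing $K$ pointwise, with $\pi (sr)=\pi (s)r$. Set $m=\pi _{\ast }\beta $, a probability measure on $K$. Using $\pi (sr)=\pi (s)r$ together with the right invariance of $\beta $, a direct computation shows that $m$ is right invariant under $K$, hence by uniqueness of Haar measure $m$ is the normalized Haar measure of the compact group $K$. Finally, commutativity gives $\widehat{u}(es)=\widehat{u}(se)=\tau _{e}\widehat{u}(s)$, so that by the right invariance of $\beta $ (with $r=e$) one obtains $\int_{\Delta (A)}\widehat{u}(es)\,d\beta (s)=\int_{\Delta (A)}\widehat{u}\,d\beta =M(u)$; on the other hand this same integral equals $\int_{K}\widehat{u}\,dm$ by definition of $m$. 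Hence $M(u)=\int_{K}\widehat{u}\,dm$, the desired identification of $M$ with the Haar integral over $K(\Delta (A))$.

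I expect the genuine obstacle to lie in the passage from invariance under the translations $\tau _{\delta _{y}}$, $y\in \mathbb{R}^{N}$, to invariance under all of $\Delta (A)$: this is where the density of $\{\delta _{y}\}$, the separate (not joint) continuity of the product, and the uniform boundedness needed for dominated convergence must be combined carefully. Once this right invariance of $\beta $ is in hand, both the group structure of $K$ in part (i) and the uniqueness-of-Haar-measure argument in part (ii) are comparatively routine.
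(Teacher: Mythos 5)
Your part (i) is essentially the paper's argument: minimality of the ideal gives $sK=K$ for $s\in K$, from which an identity and inverses are extracted, and compactness plus \cite[Theorem 2.1]{DG} upgrades the algebraic group to a topological one. For part (ii) you take a genuinely different route. The paper argues by contradiction that $\beta $ is \emph{supported} on $K(\Delta (A))$ (Urysohn's lemma produces a $g$ equal to $1$ on $K(\Delta (A))$ and vanishing on a putative compact set of positive measure outside it, and translation invariance of $\beta $ forces $\int g\,d\beta =1$), and then concludes that the invariant probability measure $\beta $, being carried by the compact group $K(\Delta (A))$, must be its Haar measure. You instead push $\beta $ forward under the retraction $\pi (s)=es$ and verify directly that $\pi _{\ast }\beta $ is an invariant probability measure on $K$, hence Haar, and that $\int_{K}\widehat{u}\,d(\pi _{\ast }\beta )=\int_{\Delta (A)}\widehat{u}(es)\,d\beta (s)=M(u)$. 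Your construction is arguably cleaner in that it never needs to locate the support of $\beta $, and it exhibits the Haar measure concretely; the paper's version has the merit of showing the stronger fact $\beta (K(\Delta (A)))=1$, which is then used in the proof of Theorem \ref{th5}. Both proofs stand or fall on the same key lemma, namely the invariance $\int \tau _{r}f\,d\beta =\int f\,d\beta $ for \emph{all} $r\in \Delta (A)$, not just $r=\delta _{y}$ --- a point the paper passes over by citing Remark \ref{re1}, and which you rightly single out as the crux.

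On that crux, however, your justification has a genuine flaw: $\{\delta _{y_{\nu }}\}$ converging to $r$ is a \emph{net}, and the dominated convergence theorem fails for nets (uniformly bounded nets of continuous functions can converge pointwise without their integrals converging). The correct repair is available from the paper's own Theorem \ref{th3}(i): since $A$ is introverted, the orbit $\mathcal{O}(f)=\{\tau _{s}f:s\in \Delta (A)\}$ is weakly compact in $\mathcal{C}(\Delta (A))$, and by Grothendieck's theorem the pointwise and weak topologies agree on this bounded set, so $s\mapsto \tau _{s}f$ is weakly continuous. Consequently $r\mapsto \int \tau _{r}f\,d\beta $ is continuous on $\Delta (A)$, and agreement with the constant $\int f\,d\beta $ on the dense set $\{\delta _{y}:y\in \mathbb{R}^{N}\}$ propagates to all of $\Delta (A)$. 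With that substitution your proof is complete.
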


\begin{proof}
(i) Set $I=K(\Delta (A))$. For $s\in \Delta (A)$, we have that $sI$ is an
ideal contained in $I$, so $sI=I$. Thus if $s\in I$, we have an element $e$
in $I$ such that $se=s$. We infer that $rse=rs$ for any $r\in I$. Since $%
sI=Is=I$, it follows that $e$ is an identity for $I$. On the other hand, it
follows from the equality\ $rI=I$ (for fixed $r$ in $I$) that there is a $%
\mu \in I$ for which $r\mu =e$, and hence, $I$ is a group. But $I$ is
compact as it is closed in the compact space $\Delta (A)$. Whence $I$ is a
compact topological group.

(ii) We know that for any $f\in A$, $M(f)=\int_{\Delta (A)}\widehat{f}d\beta 
$, $\beta $ being a regular Borel measure on $\Delta (A)$. Assuming that $%
\beta $ is not supported by the group $K(\Delta (A))$, there exists a
compact set $J\subset \Delta (A)$ disjoint from $K(\Delta (A))$ with $\beta
(J)>0$, hence $\beta (K(\Delta (A)))<1$ since $\beta (\Delta (A))=1$. By
Urysohn's lemma we can find a function $g$ in $\mathcal{C}(\Delta (A))$ such
that $g=1$ on $K(\Delta (A))$, $g=0$ on $J$ and $0\leq g\leq 1$. Set $f=%
\mathcal{G}^{-1}(g)\in A$; we have $M(f)=\int_{\Delta (A)}gd\beta <1$. But
for $s\in K(\Delta (A))$, $\tau _{s}g=1$ since $\tau _{s}g(r)=g(sr)=1$ for
any $r\in \Delta (A)$ since $rs\in K(\Delta (A))$ (recall that $K(\Delta (A))
$ is an ideal). Whence $1=\int_{\Delta (A)}d\beta =\int_{\Delta (A)}\tau
_{s}gd\beta =\int_{\Delta (A)}gd\beta $ (since $\beta $ is invariant by
translations; see Remark \ref{re1}$)=M(f)<1$. this contradicts the
assumption that $\beta $ is not supported by $K(\Delta (A))$. Finally, $%
\beta $ being invariant, it coincides with the Haar measure of $K(\Delta (A))
$.
\end{proof}

We shall henceforth consider the mean value as an integral over the kernel $%
K(\Delta (A))$ of $\Delta (A)$ whenever the algebra $A$ is introverted. We
now have in hands all the ingredients necessary to the definition of the
convolution product on the spectrum $\Delta (A)$ of any introverted algebra
with mean value $A$. To do this, let $p,q,m\geq 1$ be real numbers
satisfying $\frac{1}{p}+\frac{1}{q}=1+\frac{1}{m}$. For $u\in L^{p}(\Delta
(A))$ and $v\in L^{q}(\Delta (A))$ we define the convolution product $u%
\widehat{\ast }v$ as follows: 
\begin{equation*}
(u\widehat{\ast }v)(s)=\int_{K(\Delta (A))}u(r)v(sr^{-1})d\beta (r)\text{, \
a.e. }s\in \Delta (A).
\end{equation*}%
Then $\widehat{\ast }$ is well defined since $K(\Delta (A))$ is an ideal of $%
\Delta (A)$. Indeed for $s\in \Delta (A)$ and $r\in K(\Delta (A))$, $r^{-1}$
exists in $K(\Delta (A))$ and $sr^{-1}\in K(\Delta (A))$. from the
definition of $\widehat{\ast }$ it holds that $u\widehat{\ast }v\in
L^{m}(\Delta (A))$ and the following Young inequality holds true: 
\begin{equation*}
\left\Vert u\widehat{\ast }v\right\Vert _{L^{m}(\Delta (A))}\leq \left\Vert
u\right\Vert _{L^{p}(\Delta (A))}\left\Vert v\right\Vert _{L^{q}(\Delta
(A))}.
\end{equation*}%
Now let $u\in L^{p}(\mathbb{R}^{N};L^{p}(\Delta (A)))$ and $v\in L^{q}(%
\mathbb{R}^{N};L^{q}(\Delta (A)))$. We define the double convolution $u%
\widehat{\ast \ast }v$ as follows: 
\begin{eqnarray*}
(u\widehat{\ast \ast }v)(x,s) &=&\int_{\mathbb{R}^{N}}\left[ \left(
u(t,\cdot )\widehat{\ast }v(x-t,\cdot )\right) (s)\right] dt \\
&\equiv &\int_{\mathbb{R}^{N}}\int_{K(\Delta (A))}u(t,r)\widehat{\ast }%
v(x-t,sr^{-1})d\beta (r)dt\text{, a.e. }(x,s)\in \mathbb{R}^{N}\times \Delta
(A).
\end{eqnarray*}%
Then $\widehat{\ast \ast }$ is well defined as an element of $L^{m}(\mathbb{R%
}^{N}\times \Delta (A))$ and satisfies 
\begin{equation*}
\left\Vert u\widehat{\ast \ast }v\right\Vert _{L^{m}(\mathbb{R}^{N}\times
\Delta (A))}\leq \left\Vert u\right\Vert _{L^{p}(\mathbb{R}^{N}\times \Delta
(A))}\left\Vert v\right\Vert _{L^{q}(\mathbb{R}^{N}\times \Delta (A))}.
\end{equation*}%
It is to be noted that if $u\in L^{p}(\Omega ;L^{p}(\Delta (A)))$ where $%
\Omega $ is an open subset of $\mathbb{R}^{N}$, and $v\in L^{q}(\mathbb{R}%
^{N};L^{q}(\Delta (A)))$, we may still define $u\widehat{\ast \ast }v$ by
viewing $u$ as defined in the whole of $\mathbb{R}^{N}\times \Delta (A)$; it
suffices to take the extension by zero of $u$ outside $\Omega \times \Delta
(A)$.

Finally, for $u\in L^{p}(\mathbb{R}^{N};\mathcal{B}_{A}^{p})$ and $v\in
L^{q}(\mathbb{R}^{N};\mathcal{B}_{A}^{q})$ we define the double convolution
denoted by $\ast \ast $ as follows: $u\ast \ast v$ is that element of $L^{m}(%
\mathbb{R}^{N};\mathcal{B}_{A}^{m})$ defined by 
\begin{equation*}
\mathcal{G}_{1}(u\ast \ast v)=\widehat{u}\widehat{\ast \ast }\widehat{v}.
\end{equation*}%
It also satisfies the Young inequality.

For the next result, we need to define a dynamical system on $\Delta (A)$.
We equip $\Delta (A)$ with the $\sigma $-algebra $\mathcal{B}(\Delta (A))$
of Borelians of $\Delta (A)$ which makes $(\Delta (A),\mathcal{B}(\Delta
(A)),\beta )$ a probability space. For each fixed $x\in \mathbb{R}^{N}$, let
the mapping $\mathcal{T}(x):\Delta (A)\rightarrow \Delta (A)$ be defined by $%
\mathcal{T}(x)s=\delta _{x}s$, $s\in \Delta (A)$. Then the family $\mathcal{T%
}=\{\mathcal{T}(x):x\in \mathbb{R}^{N}\}$ defines a continuous $N$%
-dimensional dynamical system in the following sense:

\begin{itemize}
\item[(i)] (\textit{Group property}) $\mathcal{T}\left( 0\right) =id_{\Delta
(A)}$ and $\mathcal{T}\left( x+y\right) =\mathcal{T}(x)\mathcal{T}(y)$ for
all $x,y\in \mathbb{R}^{N}$;

\item[(ii)] (\textit{Invariance}) The mappings $\mathcal{T}\left( x\right)
:\Delta (A)\rightarrow \Delta (A)$ are measurable and $\beta $-measure
preserving, i.e., $\beta \left( T\left( x\right) F\right) =\beta \left(
F\right) $ for each $x\in \mathbb{R}^{N}$ and every $F\in \mathcal{B}(\Delta
(A))$;

\item[(iii)] (\textit{Continuity}) The mapping $(x,s)\mapsto \mathcal{T}(x)s$
is continuous from $\mathbb{R}^{N}\times \Delta (A)$ into $\Delta (A)$.
\end{itemize}

The next result will be of a very first importance in the following
sections. It is new and constitutes the cornerstone of the connection
between the convolution and the $\Sigma $-convergence method.

\begin{theorem}
\label{th5}Let $A$ be an algebra with mean value on $\mathbb{R}^{N}$.
Suppose that $A$ is introverted. Then denoting by $\delta _{y}$ the Dirac
mass at $y$, it holds that $\delta _{y}\in K(\Delta (A))$ for almost all $%
y\in \mathbb{R}^{N}$.
\end{theorem}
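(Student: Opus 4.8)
The plan is to show that the Dirac masses $\delta_y$ land in the kernel $K(\Delta(A))$ for almost every $y$ by exploiting the measure-theoretic content of Theorem \ref{th4}, namely that the $M$-measure $\beta$ is supported on $K(\Delta(A))$ and coincides with its Haar measure. First I would recall from Corollary \ref{co2} that the map $\varphi:\mathbb{R}^{N}\to\Delta(A)$, $\varphi(y)=\delta_y$, is a continuous homomorphism with dense range, and from the definition of the mean value that $M(u)=\int_{\Delta(A)}\widehat{u}\,d\beta$ for all $u\in A$. The key link is the $\Sigma$-convergence / mean-value identity: for $u\in A$ one has $M(u)=\lim_{r\to\infty}\frac{1}{|B_r|}\int_{B_r}u(y)\,dy$, and since $u(y)=\widehat{u}(\delta_y)=\widehat{u}(\varphi(y))$, this realizes the mean value as a spatial average of the pullback $\widehat{u}\circ\varphi$ over $\mathbb{R}^{N}$.

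Next I would set up the central dichotomy. Suppose, for contradiction, that the set
\begin{equation*}
E=\{y\in\mathbb{R}^{N}:\delta_y\notin K(\Delta(A))\}
\end{equation*}
has positive Lebesgue measure (or, more precisely, fails to be negligible in the appropriate density sense). The idea is to separate $E$ from $K(\Delta(A))$ using the fact that $K(\Delta(A))$ is a compact subset of $\Delta(A)$ on which $\beta$ is concentrated. By Urysohn's lemma, as in the proof of Theorem \ref{th4}(ii), I can produce $g\in\mathcal{C}(\Delta(A))$ with $g=1$ on $K(\Delta(A))$, $0\le g\le 1$, and $g$ small (or vanishing) on a compact neighborhood of the part of $\varphi(E)$ that stays away from $K(\Delta(A))$. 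Putting $f=\mathcal{G}^{-1}(g)\in A$, Theorem \ref{th4}(ii) gives $M(f)=\int_{\Delta(A)}g\,d\beta=\int_{K(\Delta(A))}1\,d\beta=1$, because $\beta$ is supported on $K(\Delta(A))$ where $g\equiv 1$. On the other hand, realizing $M(f)$ as the spatial average of $f(y)=g(\varphi(y))$ over $\mathbb{R}^{N}$, the contribution of a non-negligible set $E$ on which $g(\varphi(y))<1$ should force the average strictly below $1$, contradicting $M(f)=1$.

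The step I expect to be the main obstacle is making the last contradiction quantitatively rigorous, because the relation between ``$\beta(K(\Delta(A)))=1$'' and ``$\delta_y\in K(\Delta(A))$ for a.e.\ $y$'' passes through the pushforward of Lebesgue density under $\varphi$, and $\varphi$ is not injective nor measure-preserving in an elementary sense. Concretely, I would want to show that the spatial-average functional $u\mapsto\lim_{r}\frac{1}{|B_r|}\int_{B_r}\widehat{u}(\varphi(y))\,dy$ equals $\int_{\Delta(A)}\widehat{u}\,d\beta$, and then argue that if $\delta_y\notin K(\Delta(A))$ on a set of positive density, one can choose $g$ (hence $f$) detecting this with $M(f)<1$ while simultaneously $M(f)=1$ from the Haar/support identity. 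The cleanest route is likely to use the invariance and ergodicity supplied by Theorem \ref{th3} together with the measure-preserving dynamical system $\mathcal{T}(x)s=\delta_x s$ introduced just before the statement: since $\beta$ is $\mathcal{T}$-invariant and supported on $K(\Delta(A))$, and since $\mathcal{T}(y)\delta=\delta_y$ with $\delta=\delta_0$ the identity, the orbit $\{\delta_y:y\in\mathbb{R}^{N}\}$ of the identity under $\mathcal{T}$ must meet $K(\Delta(A))$ for $\beta$-a.e.\ (hence Lebesgue-a.e., via the continuity and density from Corollary \ref{co2}) parameter $y$. Verifying that this measure-preservation transfers the full-measure support statement on $\Delta(A)$ back to an almost-everywhere statement on $\mathbb{R}^{N}$ is where the real work lies.
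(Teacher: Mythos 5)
Your proposal correctly isolates the two facts that matter --- $\beta(K(\Delta(A)))=1$ from Theorem \ref{th4} and the measure-preserving dynamical system $\mathcal{T}(x)s=\delta_{x}s$ --- but neither of the two routes you sketch actually closes the argument, and the missing step is precisely the one the paper's proof turns on. Your first route (Urysohn plus the spatial-average representation of $M$) cannot work: the mean value $M(f)=\lim_{r}|B_{r}|^{-1}\int_{B_{r}}f(y)\,dy$ is blind to any set $E$ of \emph{zero asymptotic density}, and a set of positive (even infinite) Lebesgue measure can have zero density, so no contradiction with $M(f)=1$ is available; moreover, since $K(\Delta(A))$ is closed but $\{y:\delta_{y}\notin K(\Delta(A))\}$ need not stay uniformly away from it, you cannot choose a single continuous $g$ with $g\circ\varphi\le 1-\varepsilon$ on a positive-density piece of $E$ without further stratification, and upper density is not countably subadditive, so the stratification does not rescue you.

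Your second route is closer, but it founders on applying an almost-everywhere statement at the single point $s=\delta_{0}$. The paper invokes Lemma \ref{le3} (a Fubini-type transfer lemma from \cite{Jikov}): since $\Omega_{0}=K(\Delta(A))$ has full $\beta$-measure, there is a full-measure set $\Omega_{1}\subset K(\Delta(A))$ such that for each fixed $s\in\Omega_{1}$ one has $\mathcal{T}(y)s=\delta_{y}s\in K(\Delta(A))$ for Lebesgue-a.e.\ $y$. You cannot take $s=\delta_{0}$ here, because $\delta_{0}$ is generally \emph{not} in $K(\Delta(A))$ (for $A=\mathcal{B}_{\infty}(\mathbb{R}^{N})$ the kernel consists of points ``at infinity''), and in any case a.e.\ statements say nothing about a prescribed point. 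The idea you are missing is the cancellation: pick any $s\in\Omega_{1}$; then $\delta_{y}s\in K(\Delta(A))$ for a.e.\ $y$, and since $K(\Delta(A))$ is a compact group (Theorem \ref{th4}(i)) the element $s$ has an inverse $s^{-1}\in K(\Delta(A))$, whence
\begin{equation*}
\delta_{y}=(\delta_{y}s)s^{-1}\in K(\Delta(A))\,s^{-1}=K(\Delta(A))\quad\text{for a.e. }y\in\mathbb{R}^{N}.
\end{equation*}
Without this group-inversion step, knowing that the translated orbit $\delta_{y}s$ lies in the kernel tells you nothing about $\delta_{y}$ itself.
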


The proof of this result heavily relies on the following lemma whose proof
can be found in \cite{Jikov}.

\begin{lemma}[{\protect\cite[Lemma 7.1, p. 224]{Jikov}}]
\label{le3}Let $\Omega _{0}$ be a set of full measure in $\Delta (A)$. Then
there exists a set of full measure $\Omega _{1}\subset \Omega _{0}$ such
that for a given $s\in \Omega _{1}$, we have $\mathcal{T}(x)s\in \Omega _{0}$
for almost all $x\in \mathbb{R}^{N}$.
\end{lemma}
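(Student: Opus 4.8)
The plan is to run a Fubini--Tonelli argument on the product of $\mathbb{R}^N$, equipped with Lebesgue measure $\lambda$, and $\Delta(A)$, equipped with $\beta$, exploiting the fact that each $\mathcal{T}(x)$ is an invertible, measure-preserving transformation. Write $N_0 = \Delta(A) \setminus \Omega_0$, a $\beta$-null set. The goal is to show that for $\beta$-almost every $s$ the \emph{bad set} $\{x \in \mathbb{R}^N : \mathcal{T}(x)s \in N_0\}$ is $\lambda$-null; the desired $\Omega_1$ is then $\Omega_0$ intersected with the full-measure set of such good $s$, which is automatically contained in $\Omega_0$. Since $\lambda(\mathbb{R}^N) = \infty$ while $\beta$ is a probability measure, a naive Fubini over $\mathbb{R}^N \times \Delta(A)$ is not available, so I would first localize to the balls $B_n$ ($n \geq 1$), prove the claim on each, and then exhaust $\mathbb{R}^N = \bigcup_n B_n$.

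The key computation is the evaluation of the $s$-slices for fixed $x$. By the group property (i), $\mathcal{T}(x)$ is a bijection with inverse $\mathcal{T}(-x)$, so for fixed $x$ the slice $\{s : \mathcal{T}(x)s \in N_0\}$ equals $\mathcal{T}(-x)(N_0)$, which by the invariance property (ii) has $\beta$-measure $\beta(\mathcal{T}(-x)(N_0)) = \beta(N_0) = 0$. Before integrating I must handle measurability: $N_0$ need only be null in the completed $\sigma$-algebra, so $\{(x,s) : \mathcal{T}(x)s \in N_0\}$ need not be product-measurable. I would circumvent this by invoking the regularity of $\beta$ to choose a \emph{Borel} null set $\widetilde{N}_0 \supseteq N_0$; since $(x,s) \mapsto \mathcal{T}(x)s$ is continuous by (iii), the set $E_n = \{(x,s) \in B_n \times \Delta(A) : \mathcal{T}(x)s \in \widetilde{N}_0\}$ is then Borel in the product, and $\{x : \mathcal{T}(x)s \in N_0\} \subseteq \{x : \mathcal{T}(x)s \in \widetilde{N}_0\}$, so controlling the larger set suffices.

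With $E_n$ Borel, Tonelli gives
\[
(\lambda \otimes \beta)(E_n) = \int_{B_n} \beta\bigl(\mathcal{T}(-x)(\widetilde{N}_0)\bigr)\, dx = \int_{B_n} 0\, dx = 0,
\]
and integrating in the reverse order yields
\[
\int_{\Delta(A)} \lambda\bigl(\{x \in B_n : \mathcal{T}(x)s \in \widetilde{N}_0\}\bigr)\, d\beta(s) = 0.
\]
Hence for each $n$ there is a full-measure set $G_n \subseteq \Delta(A)$ on which the inner $\lambda$-measure vanishes. Setting $G = \bigcap_{n \geq 1} G_n$, still of full measure as a countable intersection, every $s \in G$ satisfies $\lambda(\{x \in B_n : \mathcal{T}(x)s \in \widetilde{N}_0\}) = 0$ for all $n$, whence $\lambda(\{x \in \mathbb{R}^N : \mathcal{T}(x)s \in \widetilde{N}_0\}) = 0$. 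Finally $\Omega_1 := \Omega_0 \cap G$ has full measure, is contained in $\Omega_0$, and for $s \in \Omega_1$ the inclusion $N_0 \subseteq \widetilde{N}_0$ forces $\mathcal{T}(x)s \in \Omega_0$ for $\lambda$-almost every $x$, as required.

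The step I expect to be the genuine obstacle is the measurability bookkeeping: guaranteeing that the fibered bad set is product-measurable (handled by passing to a Borel null envelope $\widetilde{N}_0$ via regularity of $\beta$) and accommodating the infinite measure of $\mathbb{R}^N$ (handled by the ball-by-ball exhaustion together with a countable intersection). The measure-preservation and continuity inputs (ii)--(iii), together with the invertibility coming from (i), are used essentially but routinely once these two points are settled.
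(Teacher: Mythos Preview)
Your argument is correct. It is the standard Fubini--Tonelli proof: fix $x$ and use measure preservation to see that each $s$-slice of the bad set is null, then integrate and swap the order, localizing to balls to handle the $\sigma$-finiteness of Lebesgue measure. One small simplification: in the paper's setup $(\Delta(A),\mathcal{B}(\Delta(A)),\beta)$ is explicitly the Borel $\sigma$-algebra, so $\Omega_0$ and hence $N_0$ are already Borel and the passage to a Borel null envelope $\widetilde{N}_0$ is unnecessary (though harmless).

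As for comparison with the paper: there is nothing to compare. The paper does not prove this lemma at all; it merely quotes it from \cite{Jikov} and uses it as a black box in the proof of Theorem~\ref{th5}. Your proof supplies exactly the standard argument one would expect to find in that reference.
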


\begin{proof}[Proof of Theorem \protect\ref{th5}]
We infer from Theorem \ref{th4} that $\int_{K(\Delta (A))}d\beta =1$, i.e., $%
K(\Delta (A))$ is a set of full measure in $\Delta (A)$. Therefore applying
Lemma \ref{le3} with $\Omega _{0}=K(\Delta (A))$ (the kernel of $\Delta (A)$%
) we derive the existence of a set $\Omega _{1}\subset K(\Delta (A))$ of
full $\beta $-measure such that, for a given $s\in \Omega _{1}$, $\delta
_{y}s\in K(\Delta (A))$ for almost all $y\in \mathbb{R}^{N}$. But, since $%
\Omega _{1}\subset K(\Delta (A))$, any element of $\Omega _{1}$ is
invertible. Hence, denoting by $s^{-1}$ the inverse of $s$ in $K(\Delta (A))$
we have that $\delta _{y}=(\delta _{y}s)s^{-1}\in K(\Delta
(A))s^{-1}=K(\Delta (A))$ for almost all $y\in \mathbb{R}^{N}$.
\end{proof}

\section{On a De Rham type result}

In this section, we assume $A$ to be an algebra with mean value on $\mathbb{R%
}^{N}$ as defined in Section 2. Let $u\in A$ and let $\varphi \in \mathcal{C}%
_{0}^{\infty }(\mathbb{R}^{N})=\mathcal{D}(\mathbb{R}^{N})$. Since $u$ and $%
\varphi $ are uniformly continuous and $A$ is translation invariant, we have 
$u\ast \varphi \in A$ (see the proof of Proposition 2.3 in \cite{ACAP}),
where here $\ast $ stands for the usual convolution operator. More
precisely, $u\ast \varphi \in A^{\infty }$ since $D_{y}^{\alpha }(u\ast
\varphi )=u\ast D_{y}^{\alpha }\varphi $ for any $\alpha \in \mathbb{N}^{N}$%
. For $1\leq p<\infty $, let $u\in B_{A}^{p}$ and $\eta >0$, and choose $%
v\in A$ such that $\left\Vert u-v\right\Vert _{p}<\eta /(\left\Vert \varphi
\right\Vert _{L^{1}(\mathbb{R}^{N})}+1)$. Using Young's inequality, we have 
\begin{equation*}
\left\Vert u\ast \varphi -v\ast \varphi \right\Vert _{p}\leq \left\Vert
\varphi \right\Vert _{L^{1}(\mathbb{R}^{N})}\left\Vert u-v\right\Vert
_{2}<\eta ,
\end{equation*}%
hence $u\ast \varphi \in B_{A}^{p}$ since $v\ast \varphi \in A$. We may
therefore define the convolution between $\mathcal{B}_{A}^{p}$ and $\mathcal{%
C}_{0}^{\infty }(\mathbb{R}^{N})$. Indeed, for $\mathfrak{u}=u+\mathcal{N}%
\in \mathcal{B}_{A}^{p}(\mathbb{R}^{N})$ (with $u\in B_{A}^{p}$) and $%
\varphi \in \mathcal{C}_{0}^{\infty }(\mathbb{R}^{N})$, we define $\mathfrak{%
u}\circledast \varphi $ as follows 
\begin{equation}
\mathfrak{u}\circledast \varphi :=u\ast \varphi +\mathcal{N}\equiv \varrho
(u\ast \varphi ).  \label{1.11}
\end{equation}%
Indeed, this is well defined as justified below by (\ref{0}). Thus, for $%
\mathfrak{u}\in \mathcal{B}_{A}^{p}$ and $\varphi \in \mathcal{C}%
_{0}^{\infty }(\mathbb{R}^{N})$ we have $\mathfrak{u}\circledast \varphi \in 
\mathcal{B}_{A}^{p}$ with 
\begin{equation}
\overline{D}_{y}^{\alpha }(\mathfrak{u}\circledast \varphi )=\varrho (u\ast
D_{y}^{\alpha }\varphi )\text{, all }\alpha \in \mathbb{N}^{N}.  \label{1}
\end{equation}%
We deduce from (\ref{1}) that $\mathfrak{u}\circledast \varphi \in \mathcal{D%
}_{A}(\mathbb{R}^{N})$ since $u\ast \varphi \in A^{\infty }$. Moreover, we
have 
\begin{equation}
\left\Vert \mathfrak{u}\circledast \varphi \right\Vert _{p}\leq \left\vert 
\text{Supp}\varphi \right\vert ^{\frac{1}{p}}\left\Vert \varphi \right\Vert
_{L^{p^{\prime }}(\mathbb{R}^{N})}\left\Vert \mathfrak{u}\right\Vert _{p},
\label{2}
\end{equation}%
where Supp$\varphi $ stands for the support of $\varphi $ and $\left\vert 
\text{Supp}\varphi \right\vert $ its Lebesgue measure. Indeed, we have 
\begin{equation*}
\left\Vert \mathfrak{u}\circledast \varphi \right\Vert _{p}=\left\Vert
\varrho (u\ast \varphi )\right\Vert _{p}=\left( \underset{r\rightarrow
+\infty }{\lim \sup }\left\vert B_{r}\right\vert ^{-1}\int_{B_{r}}\left\vert
(u\ast \varphi )(y)\right\vert ^{p}dy\right) ^{\frac{1}{p}},
\end{equation*}%
and 
\begin{eqnarray*}
\int_{B_{r}}\left\vert (u\ast \varphi )(y)\right\vert ^{p}dy &\leq &\left(
\int_{B_{r}}\left\vert \varphi \right\vert dy\right) ^{p}\left(
\int_{B_{r}}\left\vert u(y)\right\vert ^{p}dy\right)  \\
&\leq &\left\vert B_{r}\cap \text{Supp}\varphi \right\vert \left\Vert
\varphi \right\Vert _{L^{p^{\prime }}(B_{r})}^{p}\int_{B_{r}}\left\vert
u(y)\right\vert ^{p}dy,
\end{eqnarray*}%
hence (\ref{2}). For $u\in A$ and $\varphi \in \mathcal{C}_{0}^{\infty }(%
\mathbb{R}^{N})$ the convolution $\widehat{u}\circledast \varphi $ is
defined as follows 
\begin{equation}
\left( \widehat{u}\circledast \varphi \right) (s)=\int_{\mathbb{R}^{N}}%
\widehat{\tau _{y}u}(s)\varphi (y)dy\text{\ \ \ \ (}s\in \Delta (A)\text{),}
\label{a}
\end{equation}%
where $\widehat{u}=\mathcal{G}(u)$ and $\tau _{y}u=u(\cdot +y)$. It is
easily seen that $\widehat{u}\circledast \varphi \in \mathcal{C}(\Delta (A))$%
. We have 
\begin{equation}
\widehat{u\ast \varphi }=\widehat{u}\circledast \varphi \text{ for all }u\in
A\text{ and }\varphi \in \mathcal{C}_{0}^{\infty }(\mathbb{R}^{N}).
\label{0}
\end{equation}%
Indeed, for $x\in \mathbb{R}^{N}$, we have 
\begin{eqnarray*}
\left( \widehat{u}\circledast \varphi \right) (\delta _{x}) &=&\int_{\mathbb{%
R}^{N}}\widehat{\tau _{y}u}(\delta _{x})\varphi (y)dy=\int_{\mathbb{R}%
^{N}}\tau _{y}u(x)\varphi (y)dy \\
&=&\left( u\ast \varphi \right) (x)=\widehat{u\ast \varphi }(\delta _{x}),
\end{eqnarray*}%
and (\ref{0}) follows by the continuity of both $\widehat{u}\circledast
\varphi $ and $\widehat{u\ast \varphi }$, and the denseness of $\{\delta
_{x}:x\in \mathbb{R}^{N}\}$ in $\Delta (A)$. As claimed above, (\ref{0})
justifies that $\mathfrak{u}\circledast \varphi $ is well-defined by (\ref%
{1.11}) for $\mathfrak{u}\in \mathcal{B}_{A}^{p}(\mathbb{R}^{N})$ and $%
\varphi \in \mathcal{C}_{0}^{\infty }(\mathbb{R}^{N})$. Indeed, for $u,v\in 
\mathfrak{u}$, we have $u,v\in B_{A}^{p}$ with $\widehat{u}=\widehat{v}$ and
so $\mathfrak{u}=u+\mathcal{N}=v+\mathcal{N}$. It emerges from (\ref{0})
that $\widehat{u\ast \varphi }=\widehat{u}\ast \varphi =\widehat{v}\ast
\varphi =\widehat{v\ast \varphi }$, hence $u\ast \varphi +\mathcal{N}=v\ast
\varphi +\mathcal{N}$.

We also have the obvious equality 
\begin{equation}
\partial _{i}(\widehat{u}\circledast \varphi )=\widehat{u}\circledast \frac{%
\partial \varphi }{\partial y_{i}}\text{ for all }1\leq i\leq N\text{.}
\label{b}
\end{equation}

The following De Rham type result holds.

\begin{theorem}
\label{t1}Let $1<p<\infty $. Let $L$ be a bounded linear functional on $(%
\mathcal{B}_{A}^{1,p^{\prime }})^{N}$ which vanishes on the kernel of the
divergence. Then there exists a function $f\in \mathcal{B}_{A}^{p}$ such
that $L=\overline{\nabla }_{y}f$, i.e., 
\begin{equation*}
L(v)=-\int_{\Delta (A)}\widehat{f}~\widehat{\Div}\widehat{v}d\beta \text{
for all }v\in (\mathcal{B}_{A}^{1,p^{\prime }})^{N}\text{.}
\end{equation*}%
Moreover $f$ is unique modulo $I_{A}^{p}$, that is, up to an additive
function $g\in \mathcal{B}_{A}^{p}$ verifying $\overline{\nabla }_{y}g=0$.
\end{theorem}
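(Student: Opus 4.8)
The plan is to realise $L$ as the divergence followed by a bounded functional on $\mathcal{B}_A^{p'}$, and then to invoke the duality $(\mathcal{B}_A^{p'})'\cong\mathcal{B}_A^p$. One direction is trivial: for any $f\in\mathcal{B}_A^p$ the functional $v\mapsto-\int_{\Delta(A)}\widehat{f}\,\widehat{\Div}\widehat{v}\,d\beta$ annihilates $\ker\overline{\Div}_y$, so only the construction of $f$ carries content. Write $X=(\mathcal{B}_A^{1,p'})^N$ and regard the divergence as a bounded operator $\overline{\Div}_y:X\to\mathcal{B}_A^{p'}$, $v\mapsto\sum_iD_{i,p'}v_i$; by Proposition \ref{p2.2}(ii) its range lies in the mean-zero subspace $\{g\in\mathcal{B}_A^{p'}:M(g)=0\}$. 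Since $L$ vanishes on $V:=\ker\overline{\Div}_y$, it descends to a well defined linear functional $\widetilde{L}$ on the range $R:=\overline{\Div}_y(X)\subset\mathcal{B}_A^{p'}$ by $\widetilde{L}(\overline{\Div}_y v)=L(v)$, because $\overline{\Div}_y v_1=\overline{\Div}_y v_2$ forces $v_1-v_2\in V$.

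Once $\widetilde{L}$ is shown to be continuous for the norm of $\mathcal{B}_A^{p'}$, I would extend it by the Hahn–Banach theorem to a bounded functional on all of $\mathcal{B}_A^{p'}$. The isometric identification $\mathcal{G}_1:\mathcal{B}_A^{p'}\cong L^{p'}(\Delta(A))$, together with $L^{p'}(\Delta(A))'=L^{p}(\Delta(A))\cong\mathcal{B}_A^p$ (legitimate since $1<p<\infty$ and $\beta$ is a probability measure), then yields a function $f_0\in\mathcal{B}_A^p$ with $\widetilde{L}(g)=\int_{\Delta(A)}\widehat{f_0}\,\widehat{g}\,d\beta$ for all $g\in\mathcal{B}_A^{p'}$. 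Putting $f=-f_0$ gives
\begin{equation*}
L(v)=\widetilde{L}(\overline{\Div}_y v)=\int_{\Delta(A)}\widehat{f_0}\,\widehat{\Div}\widehat{v}\,d\beta=-\int_{\Delta(A)}\widehat{f}\,\widehat{\Div}\widehat{v}\,d\beta\qquad(v\in X),
\end{equation*}
which is exactly $L=\overline{\nabla}_y f$.

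The crux, and the step I expect to be the main obstacle, is precisely the continuity of $\widetilde{L}$, i.e. the a priori estimate $|L(v)|\le C\,\|\overline{\Div}_y v\|_{p'}$ for $v\in X$. This is equivalent to $\overline{\Div}_y$ having closed range and, by the closed range theorem, to the distributional gradient $\overline{\nabla}_y:\mathcal{B}_A^p/I_A^p\to X'$ (which is injective by the uniqueness discussion below) being bounded below — a Nečas-type inequality. I stress that a soft weak-compactness argument cannot replace it, since for these almost periodic–like algebras the embedding $\mathcal{B}_A^{1,p}/I_A^p\hookrightarrow\mathcal{B}_A^p$ is not compact. The route I would pursue is to build a bounded right inverse of the divergence through a Poisson solver on $\Delta(A)$: for mean-zero $g$, solve $\overline{\Delta}_y u=g$ in $\mathcal{B}_A^{1,2}/I_A^2$ by Lax–Milgram, the coercivity of $(u,w)\mapsto\langle\overline{\nabla}_y u,\overline{\nabla}_y w\rangle$ being exactly the norm equality (\ref{2.5'}); then $v=\overline{\nabla}_y u$ satisfies $\overline{\Div}_y v=g$. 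Promoting this from $p=2$ to the full $L^{p'}$ scale and obtaining the bound $\|v\|_X\le C\|g\|_{p'}$ — i.e. full second-order $L^{p'}$ regularity for the abstract Laplacian, a Calderón–Zygmund estimate on $\Delta(A)$ — is the genuinely hard analytic input and is where the work on the spectrum must be done carefully.

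Finally, uniqueness modulo $I_A^p$ is immediate. If $\overline{\nabla}_y f=\overline{\nabla}_y f'$ as functionals on $X$, set $h=f-f'\in\mathcal{B}_A^p$ and test against $v=\phi e_i$ with $\phi\in\mathcal{D}_A(\mathbb{R}^N)$; since $\widehat{\Div}\widehat{v}=\partial_i\widehat{\phi}$ one gets $\int_{\Delta(A)}\widehat{h}\,\partial_i\widehat{\phi}\,d\beta=0$, that is $D^{\alpha_i}h=0$ in $\mathcal{D}_A^{\prime}(\mathbb{R}^N)$ for each $i$. By the converse characterisation of $\mathcal{D}_{i,p}$ recalled in Section 2 this forces $h\in\mathcal{D}_{i,p}$ with $D_{i,p}h=0$ for all $i$, whence $h\in I_A^p$ by (\ref{2.5}).
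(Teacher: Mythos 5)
Your reduction is correct as far as it goes — the theorem is indeed equivalent, via Hahn--Banach and the duality $L^{p'}(\Delta(A))'=L^{p}(\Delta(A))$, to the a priori estimate $|L(v)|\leq C\left\Vert \overline{\Div}_{y}v\right\Vert _{p'}$, i.e.\ to the closed range of the divergence — but you have not proved that estimate, and it is the entire content of the theorem. You say so yourself: the Ne\v{c}as-type inequality, or equivalently the construction of a bounded right inverse of $\overline{\Div}_{y}$ with $L^{p'}$ bounds, is flagged as ``the genuinely hard analytic input'' and then left as a program. The program as sketched is moreover doubtful: $\Delta(A)$ is merely a compact topological space carrying the measure $\beta$ and the action $\mathcal{T}$; it has no metric, doubling, or smooth structure on which to run a Calder\'on--Zygmund theory, and the ``Laplacian'' $\overline{\Delta}_{y}$ generated by the group $\{T(y)\}$ is a highly degenerate operator on $\mathcal{B}_{A}^{2}$ (its resolvent is not compact, the invariant functions $I_{A}^{p}$ can be infinite-dimensional) for which second-order $L^{p'}$ regularity is not available by any standard argument. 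So the proof has a genuine gap at its central point.

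The paper closes exactly this gap by a different, more concrete device: it never works with the divergence globally on $\Delta(A)$, but pulls $L$ back to $\mathbb{R}^{N}$. For $u\in A^{\infty}$ one sets $L_{u}(\varphi)=L(\varrho(u\ast\varphi))$ for $\varphi\in\mathcal{D}(\mathbb{R}^{N})^{N}$; this is a classical distribution vanishing on the kernel of $\Div_{y}$, so the classical De Rham theorem applies and yields $S(u)$ with $L_{u}=\nabla_{y}S(u)$. The quantitative input is then the Bogovskii-type solvability of $\Div\varphi=g$ on balls $B_{r}$ with the bound $\left\Vert \varphi\right\Vert _{W^{1,p}(B_{r})^{N}}\leq C(p,B_{r})\left\Vert g\right\Vert _{L^{p}(B_{r})}$ (cited from Novotn\'y--Stra\v{s}kraba), which gives $\left\Vert S(u)\right\Vert _{L^{p'}(B_{r})}\lesssim\left\Vert L\right\Vert \left\Vert \varrho(u)\right\Vert _{p'}$; translation-equivariance $S(\tau_{y}u)=\tau_{y}S(u)$ and smoothness of $S(u)$ then let one evaluate at the origin and obtain a functional $v\mapsto S(u)(0)$ bounded by $c\left\Vert L\right\Vert \left\Vert v\right\Vert _{p'}$ on $\mathcal{D}_{A}(\mathbb{R}^{N})$, whence $f\in\mathcal{B}_{A}^{p}$ by density and duality, and a mollifier argument removes the convolution. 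In other words, the needed uniform estimate is obtained locally on Euclidean balls, where it is classical, rather than globally on the spectrum, where it is inaccessible. Your uniqueness argument, by contrast, is complete and agrees with the paper's.
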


\begin{proof}
Let $u\in A^{\infty }$ (hence $\varrho (u)\in \mathcal{D}_{A}(\mathbb{R}%
^{N}) $). Define $L_{u}:\mathcal{D}(\mathbb{R}^{N})^{N}\rightarrow \mathbb{R}
$ by 
\begin{equation*}
L_{u}(\varphi )=L(\varrho (u\ast \varphi ))\text{ for }\varphi =(\varphi
_{i})\in \mathcal{D}(\mathbb{R}^{N})^{N}
\end{equation*}%
where $u\ast \varphi =(u\ast \varphi _{i})_{i}\in (A^{\infty })^{N}$. Then $%
L_{u}$ defines a distribution on $\mathcal{D}(\mathbb{R}^{N})^{N}$. Moreover
if $\Div_{y}\varphi =0$ then $\overline{\Div}_{y}(\varrho (u\ast \varphi
))=\varrho (u\ast \Div_{y}\varphi )=0$, hence $L_{u}(\varphi )=0$, that is, $%
L_{u}$ vanishes on the kernel of the divergence in $\mathcal{D}(\mathbb{R}%
^{N})^{N}$. By the De Rham theorem, there exists a distribution $S(u)\in 
\mathcal{D}^{\prime }(\mathbb{R}^{N})$ such that $L_{u}=\nabla _{y}S(u)$.
This defines an operator 
\begin{equation*}
S:A^{\infty }\rightarrow \mathcal{D}^{\prime }(\mathbb{R}^{N});\ u\mapsto
S(u)
\end{equation*}%
satisfying the following properties:

\begin{itemize}
\item[(i)] $S(\tau _{y}u)=\tau _{y}S(u)$ for all $y\in \mathbb{R}^{N}$ and
all $u\in A^{\infty }$;

\item[(ii)] $S$ maps linearly and continuously $A^{\infty }$ into $L_{\text{%
loc}}^{p^{\prime }}(\mathbb{R}^{N})$;

\item[(iii)] There is a positive constant $C_{r}$ (that is locally bounded
as a function of $r$) such that 
\begin{equation*}
\left\Vert S(u)\right\Vert _{L^{p^{\prime }}(B_{r})}\leq C_{r}\left\Vert
L\right\Vert \left\vert B_{r}\right\vert ^{\frac{1}{p^{\prime }}}\left\Vert
\varrho (u)\right\Vert _{p^{\prime }}.
\end{equation*}
\end{itemize}

The property (i) easily comes from the obvious equality 
\begin{equation*}
L_{\tau _{y}u}(\varphi )=L_{u}(\tau _{y}\varphi )\ \ \forall y\in \mathbb{R}%
^{N}.
\end{equation*}%
Let us check (ii) and (iii). For that, let $\varphi \in \mathcal{D}(\mathbb{R%
}^{N})^{N}$ with Supp$\varphi _{i}\subset B_{r}$ for all $1\leq i\leq N$.
Then 
\begin{eqnarray*}
\left\vert L_{u}(\varphi )\right\vert &=&\left\vert L(\varrho (u\ast \varphi
))\right\vert \\
&\leq &\left\Vert L\right\Vert \left\Vert \varrho (u)\circledast \varphi
\right\Vert _{(\mathcal{B}_{A}^{1,p^{\prime }})^{N}} \\
&\leq &\max_{1\leq i\leq N}\left\vert \text{Supp}\varphi _{i}\right\vert ^{%
\frac{1}{p^{\prime }}}\left\Vert L\right\Vert \left\Vert \varrho
(u)\right\Vert _{p^{\prime }}\left\Vert \varphi \right\Vert
_{W^{1,p}(B_{r})^{N}},
\end{eqnarray*}%
the last inequality being due to (\ref{2}). Hence, as Supp$\varphi
_{i}\subset B_{r}$ ($1\leq i\leq N$), 
\begin{equation}
\left\Vert L_{u}\right\Vert _{W^{-1,p^{\prime }}(B_{r})^{N}}\leq \left\Vert
L\right\Vert \left\vert B_{r}\right\vert ^{\frac{1}{p^{\prime }}}\left\Vert
\varrho (u)\right\Vert _{p^{\prime }}.  \label{3}
\end{equation}%
Now, let $g\in \mathcal{C}_{0}^{\infty }(B_{r})$ with $\int_{B_{r}}gdy=0$;
then by \cite[Lemma 3.15]{Novotny} there exists $\varphi \in \mathcal{C}%
_{0}^{\infty }(B_{r})^{N}$ such that $\Div\varphi =g$ and $\left\Vert
\varphi \right\Vert _{W^{1,p}(B_{r})^{N}}\leq C(p,B_{r})\left\Vert
g\right\Vert _{L^{p}(B_{r})}$. We have 
\begin{eqnarray*}
\left\vert \left\langle S(u),g\right\rangle \right\vert &=&\left\vert
-\left\langle \nabla _{y}S(u),\varphi \right\rangle \right\vert =\left\vert
\left\langle L_{u},\varphi \right\rangle \right\vert \\
&\leq &\left\Vert L_{u}\right\Vert _{W^{-1,p^{\prime
}}(B_{r})^{N}}\left\Vert \varphi \right\Vert _{W^{1,p}(B_{r})^{N}} \\
&\leq &C(p,B_{r})\left\Vert L\right\Vert \left\vert B_{r}\right\vert ^{\frac{%
1}{p^{\prime }}}\left\Vert \varrho (u)\right\Vert _{p^{\prime }}\left\Vert
g\right\Vert _{L^{p}(B_{r})},
\end{eqnarray*}%
and by a density argument, we get that $S(u)\in (L^{p}(B_{r})/\mathbb{R}%
)^{\prime }=L^{p^{\prime }}(B_{r})/\mathbb{R}$ for any $r>0$, where $%
L^{p^{\prime }}(B_{r})/\mathbb{R}=\{\psi \in L^{p^{\prime
}}(B_{r}):\int_{B_{r}}\psi dy=0\}$. The properties (ii) and (iii) therefore
follow from the above series of inequalities. Taking (ii) as granted it
comes that 
\begin{equation}
L_{u}(\varphi )=-\int_{\mathbb{R}^{N}}S(u)\Div_{y}\varphi dy\text{ for all }%
\varphi \in \mathcal{D}(\mathbb{R}^{N})^{N}.  \label{4}
\end{equation}%
We claim that $S(u)\in \mathcal{C}^{\infty }(\mathbb{R}^{N})$ for all $u\in
A^{\infty }$. Indeed let $e_{i}=(\delta _{ij})_{1\leq j\leq N}$ ($\delta
_{ij}$ the Kronecker delta). Then owing to (i) and (iii) above, we have 
\begin{eqnarray*}
\left\Vert t^{-1}(\tau _{te_{i}}S(u)-S(u))-S\left( \frac{\partial u}{%
\partial y_{i}}\right) \right\Vert _{L^{p^{\prime }}(B_{r})} &=&\left\Vert
S\left( t^{-1}(\tau _{te_{i}}u-u)-\frac{\partial u}{\partial y_{i}}\right)
\right\Vert _{L^{p^{\prime }}(B_{r})} \\
&\leq &c\left\Vert t^{-1}(\varrho (\tau _{te_{i}}u-u))-\varrho \left( \frac{%
\partial u}{\partial y_{i}}\right) \right\Vert _{p^{\prime }}.
\end{eqnarray*}%
Hence, passing to the limit as $t\rightarrow 0$ above leads us to 
\begin{equation*}
\frac{\partial }{\partial y_{i}}S(u)=S\left( \frac{\partial u}{\partial y_{i}%
}\right) \text{ for all }1\leq i\leq N.
\end{equation*}%
Repeating the same process we end up with 
\begin{equation*}
D_{y}^{\alpha }S(u)=S(D_{y}^{\alpha }u)\text{ for all }\alpha \in \mathbb{N}%
^{N}\text{.}
\end{equation*}%
So all the weak derivative of $S(u)$ of any order belong to $L_{\text{loc}%
}^{p^{\prime }}(\mathbb{R}^{N})$. Our claim is therefore a consequence of 
\cite[Theorem XIX, p. 191]{LS}.

This being so, we derive from the mean value theorem the existence of $\xi
\in B_{r}$ such that 
\begin{equation*}
S(u)(\xi )=\left\vert B_{r}\right\vert ^{-1}\int_{B_{r}}S(u)dy.
\end{equation*}%
On the other hand, the map $u\mapsto S(u)(0)$ is a linear functional on $%
A^{\infty }$, and by the above equality we get 
\begin{eqnarray*}
\left\vert S(u)(0)\right\vert  &\leq &~\underset{r\rightarrow 0}{\lim \sup }%
\left\vert B_{r}\right\vert ^{-1}\int_{B_{r}}\left\vert S(u)\right\vert dy \\
&\leq &~\underset{r\rightarrow 0}{\lim \sup }\left\vert B_{r}\right\vert ^{-%
\frac{1}{p^{\prime }}}\left( \int_{B_{r}}\left\vert S(u)\right\vert
^{p^{\prime }}dy\right) ^{\frac{1}{p^{\prime }}} \\
&\leq &c\left\Vert L\right\Vert \left\Vert \varrho (u)\right\Vert
_{p^{\prime }}.
\end{eqnarray*}%
Hence, defining $\widetilde{S}:\mathcal{D}_{A}(\mathbb{R}^{N})\rightarrow 
\mathbb{R}$ by $\widetilde{S}(v)=S(u)(0)$ for $v=\varrho (u)$ with $u\in
A^{\infty }$, we get that $\widetilde{S}$ is a linear functional on $%
\mathcal{D}_{A}(\mathbb{R}^{N})$ satisfying 
\begin{equation}
\left\vert \widetilde{S}(v)\right\vert \leq c\left\Vert L\right\Vert
\left\Vert v\right\Vert _{p^{\prime }}\ \ \forall v\in \mathcal{D}_{A}(%
\mathbb{R}^{N})\text{.}  \label{5}
\end{equation}%
We infer from both the density of $\mathcal{D}_{A}(\mathbb{R}^{N})$ in $%
\mathcal{B}_{A}^{p^{\prime }}$ and (\ref{5}) the \ existence of a function $%
f\in \mathcal{B}_{A}^{p}$ with $\left\Vert f\right\Vert _{p}\leq c\left\Vert
L\right\Vert $ such that 
\begin{equation*}
\widetilde{S}(v)=\int_{\Delta (A)}\widehat{f}\widehat{v}d\beta \text{ for
all }v\in \mathcal{B}_{A}^{p^{\prime }}\text{.}
\end{equation*}%
In particular 
\begin{equation*}
S(u)(0)=\int_{\Delta (A)}\widehat{f}\widehat{u}d\beta \ \ \forall u\in
A^{\infty }
\end{equation*}%
where $\widehat{u}=\mathcal{G}(u)=\mathcal{G}_{1}(\varrho (u))$. Now, let $%
u\in A^{\infty }$ and let $y\in \mathbb{R}^{N}$. By (i) we have 
\begin{equation*}
S(u)(y)=S(\tau _{y}u)(0)=\int_{\Delta (A)}\widehat{\tau _{y}u}\widehat{f}%
d\beta \text{.}
\end{equation*}%
Thus 
\begin{eqnarray*}
L_{u}(\varphi ) &=&L(\varrho (u\ast \varphi ))=-\int_{\mathbb{R}^{N}}S(u)(y)%
\Div_{y}\varphi dy\text{ \ (by (\ref{4}))} \\
&=&-\int_{\mathbb{R}^{N}}\left( \int_{\Delta (A)}\widehat{\tau _{y}u}%
\widehat{f}d\beta \right) \Div_{y}\varphi dy \\
&=&-\int_{\Delta (A)}\left( \int_{\mathbb{R}^{N}}\widehat{\tau _{y}u}(s)\Div%
_{y}\varphi dy\right) \widehat{f}d\beta  \\
&=&-\int_{\Delta (A)}\widehat{f}(\widehat{u}\circledast \Div_{y}\varphi
)d\beta \text{ \ (by (\ref{a}))} \\
&=&-\int_{\Delta (A)}\widehat{f}~\mathcal{G}\left( u\ast \Div_{y}\varphi
\right) d\beta \text{ \ (by (\ref{0}))} \\
&=&-\int_{\Delta (A)}\widehat{f}~\mathcal{G}\left( \Div_{y}(u\ast \varphi
)\right) d\beta  \\
&=&-\int_{\Delta (A)}\widehat{f}~\mathcal{G}_{1}\left( \overline{\Div}%
_{y}(\varrho (u\ast \varphi ))\right) d\beta  \\
&=&\left\langle \overline{\nabla }_{y}f,\varrho (u\ast \varphi
)\right\rangle .
\end{eqnarray*}%
Finally let $v\in (\mathcal{B}_{A}^{1,p^{\prime }})^{N}$ and let $(\varphi
_{n})_{n}\subset \mathcal{D}(\mathbb{R}^{N})$ be a mollifier. Then $%
v\circledast \varphi _{n}\rightarrow v$ in $(\mathcal{B}_{A}^{1,p^{\prime
}})^{N}$ as $n\rightarrow \infty $, where $v\circledast \varphi
_{n}=(v_{i}\circledast \varphi _{n})_{i}$. We have $v\circledast \varphi
_{n}\in \mathcal{D}_{A}(\mathbb{R}^{N})^{N}$ and $L(v\circledast \varphi
_{n})\rightarrow L(v)$ by the continuity of $L$. On the other hand 
\begin{equation*}
\int_{\Delta (A)}\widehat{f}~\mathcal{G}_{1}\left( \overline{\Div}%
_{y}(v\circledast \varphi _{n})\right) d\beta \rightarrow \int_{\Delta (A)}%
\widehat{f}~\widehat{\Div}\widehat{v}d\beta .
\end{equation*}%
We deduce that $L$ and $\overline{\nabla }_{y}f$ agree on $(\mathcal{B}%
_{A}^{1,p^{\prime }})^{N}$, i.e., $L=\overline{\nabla }_{y}f$.

For the uniqueness, let $f_{1}$ and $f_{2}$ in $\mathcal{B}_{A}^{p}$ be such
that $L=\overline{\nabla }_{y}f_{1}=\overline{\nabla }_{y}f_{2}$, then $%
\overline{\nabla }_{y}(f_{1}-f_{2})=0$, which means that $f_{1}-f_{2}\in
I_{A}^{p}$.
\end{proof}

The preceding result together with its proof are still valid mutatis
mutandis when the function spaces are complex-valued. In that case, we only
require the algebra $A$ to be closed under complex conjugation ($\overline{u}%
\in A$ whenever $u\in A$). As a result of the preceding theorem, we have the

\begin{corollary}
\label{c1}Let $f\in (\mathcal{B}_{A}^{p})^{N}$ be such that 
\begin{equation*}
\int_{\Delta (A)}\widehat{f}\cdot \widehat{g}d\beta =0\text{ }\forall g\in 
\mathcal{D}_{A}(\mathbb{R}^{N})^{N}\text{ with }\overline{\Div}_{y}g=0.
\end{equation*}%
Then there exists a function $u\in \mathcal{B}_{A}^{1,p}$, uniquely
determined modulo $I_{A}^{p}$, such that $f=\overline{\nabla }_{y}u$.
\end{corollary}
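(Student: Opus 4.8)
The plan is to derive this from the De Rham type result, Theorem~\ref{t1}, by a duality argument (understanding $1<p<\infty$ as in that theorem). Given $f=(f_i)\in(\mathcal{B}_A^p)^N$ satisfying the stated orthogonality, I would associate to it the functional
\[
L(v)=\int_{\Delta(A)}\widehat{f}\cdot\widehat{v}\,d\beta,\qquad v\in(\mathcal{B}_A^{1,p^{\prime}})^N .
\]
Since $\mathcal{G}_1$ is an isometry of $\mathcal{B}_A^{p^{\prime}}$ onto $L^{p^{\prime}}(\Delta(A))$, Hölder's inequality gives $|L(v)|\le\|f\|_{(\mathcal{B}_A^p)^N}\|v\|_{(\mathcal{B}_A^{p^{\prime}})^N}\le\|f\|_{(\mathcal{B}_A^p)^N}\|v\|_{(\mathcal{B}_A^{1,p^{\prime}})^N}$, so $L$ is a bounded linear functional on $(\mathcal{B}_A^{1,p^{\prime}})^N$. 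By hypothesis, $L(g)=0$ for every $g\in\mathcal{D}_A(\mathbb{R}^N)^N$ with $\overline{\Div}_y g=0$.

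The decisive step is to promote this to the statement that $L$ vanishes on the \emph{whole} kernel of the divergence, namely on $\{v\in(\mathcal{B}_A^{1,p^{\prime}})^N:\overline{\Div}_y v=0\}$, so that Theorem~\ref{t1} applies. For this I would mollify with the operator $\circledast$ of this section. Let $(\varphi_n)_n\subset\mathcal{D}(\mathbb{R}^N)$ be a mollifier and fix $v\in(\mathcal{B}_A^{1,p^{\prime}})^N$ with $\overline{\Div}_y v=0$. Then $v\circledast\varphi_n=(v_i\circledast\varphi_n)_i$ belongs to $\mathcal{D}_A(\mathbb{R}^N)^N$ and, as recorded in the proof of Theorem~\ref{t1}, $v\circledast\varphi_n\to v$ in $(\mathcal{B}_A^{1,p^{\prime}})^N$. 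The point I must establish is that mollification commutes with the abstract divergence, i.e. $\overline{\Div}_y(v\circledast\varphi_n)=(\overline{\Div}_y v)\circledast\varphi_n$; granting this, each $v\circledast\varphi_n$ is a smooth divergence-free field, hence $L(v\circledast\varphi_n)=0$, and letting $n\to\infty$ the continuity of $L$ yields $L(v)=0$.

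I expect this commutation to be the main obstacle. I would prove it first for $v\in\varrho((A^\infty)^N)=\mathcal{D}_A(\mathbb{R}^N)^N$, where it reduces via Lemma~\ref{l2.1} and the definition \eqref{1.11} of $\circledast$ to the elementary identity $\partial_{y_i}(w\ast\varphi_n)=(\partial_{y_i}w)\ast\varphi_n$ for $w\in A^\infty$; then I would pass to general $v$ by approximating in the $(\mathcal{B}_A^{1,p^{\prime}})^N$ norm (density of $\mathcal{D}_A(\mathbb{R}^N)$ in $\mathcal{B}_A^{1,p^{\prime}}$), using the boundedness of $\cdot\circledast\varphi_n$ from estimate \eqref{2} together with the closedness of the graph of each $D_{i,p^{\prime}}$ to pass to the limit. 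With the commutation in hand, Theorem~\ref{t1} provides $u\in\mathcal{B}_A^p$ with $L(v)=-\int_{\Delta(A)}\widehat{u}\,\widehat{\Div}\widehat{v}\,d\beta$ for all $v\in(\mathcal{B}_A^{1,p^{\prime}})^N$.

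It remains to identify $u$ as the desired potential. Testing both expressions for $L$ against $v=\phi e_i$ with $\phi\in\mathcal{D}_A(\mathbb{R}^N)$ gives $\int_{\Delta(A)}\widehat{f}_i\widehat{\phi}\,d\beta=-\int_{\Delta(A)}\widehat{u}\,\partial_i\widehat{\phi}\,d\beta$ for every such $\phi$, which is exactly the converse characterization of the weak derivative established in Section~2; it forces $u\in\mathcal{D}_{i,p}$ with $D_{i,p}u=-f_i$. Hence $u\in\cap_{i=1}^N\mathcal{D}_{i,p}=\mathcal{B}_A^{1,p}$ and $\overline{\nabla}_y(-u)=f$, so $-u$ is a potential in $\mathcal{B}_A^{1,p}$ for $f$. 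Finally, uniqueness modulo $I_A^p$ is immediate: if $\overline{\nabla}_y u_1=\overline{\nabla}_y u_2=f$ then $D_{i,p}(u_1-u_2)=0$ for all $i$, so by property \eqref{2.5} one has $u_1-u_2\in I_A^p$.
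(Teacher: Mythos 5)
Your proposal is correct and follows the same route as the paper: define $L(v)=\int_{\Delta(A)}\widehat{f}\cdot\widehat{v}\,d\beta$ on $(\mathcal{B}_A^{1,p'})^N$ and invoke Theorem~\ref{t1}. The paper's own proof is exactly this in one line, and it silently treats the corollary's hypothesis (vanishing on divergence-free elements of $\mathcal{D}_A(\mathbb{R}^N)^N$) as equivalent to the theorem's hypothesis (vanishing on the whole kernel of $\overline{\Div}_y$ in $(\mathcal{B}_A^{1,p'})^N$). Your mollification argument --- using $(\ref{1.11})$, the convergence $v\circledast\varphi_n\to v$ in $(\mathcal{B}_A^{1,p'})^N$ already recorded in the proof of Theorem~\ref{t1}, the commutation $D_{i,p'}(v_j\circledast\varphi_n)=(D_{i,p'}v_j)\circledast\varphi_n$ proved first on $\varrho(A^\infty)$ and extended by density plus the closed graph of $D_{i,p'}$ --- legitimately closes that gap, and your sign bookkeeping ($f=\overline{\nabla}_y(-u)$) and the identification of $u\in\mathcal{B}_A^{1,p}$ via the converse weak-derivative characterization of Section~2 are both correct. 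An alternative, slightly cheaper observation is that the proof of Theorem~\ref{t1} only ever uses the vanishing hypothesis on fields of the form $\varrho(u\ast\varphi)$ with $u\in A^\infty$ and $\Div_y\varphi=0$, which already lie in $\mathcal{D}_A(\mathbb{R}^N)^N$, so the corollary's weaker hypothesis suffices without any further approximation; but your explicit density argument is equally valid and makes the application of the theorem as a black box rigorous.
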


\begin{proof}
Define $L:(\mathcal{B}_{A}^{1,p^{\prime }})^{N}\rightarrow \mathbb{R}$ by $%
L(v)=\int_{\Delta (A)}\widehat{f}\cdot \widehat{v}d\beta $. Then $L$ lies in 
$\left[ (\mathcal{B}_{A}^{1,p^{\prime }})^{N}\right] ^{\prime }$, and it
follows from Theorem \ref{t1} the existence of $u\in \mathcal{B}_{A}^{p}$
such that $f=\overline{\nabla }_{y}u$. This shows at once that $u\in 
\mathcal{B}_{A}^{1,p}$. The uniqueness is shown as in Theorem \ref{t1}.
\end{proof}

\begin{remark}
\label{r1}\emph{Let }$u\in \mathcal{B}_{A}^{p}$\emph{\ be such that }$%
\overline{\nabla }_{y}u=0$\emph{; then }$u\in I_{A}^{p}$\emph{. Thus, for
the uniqueness argument, we may choose the function }$u$\emph{\ in Corollary %
\ref{c1} to belong to }$\mathcal{B}_{A}^{1,p}/I_{A}^{p}$\emph{, which space
we shall henceforth equip with the norm gradient norm as above; see (\ref%
{2.5'}).}
\end{remark}

\section{Sigma convergence method}

Throughout this section, $\Omega $ is an open subset of $\mathbb{R}^{N}$,
and unless otherwise specified, $A$ is an algebra with mean value on $%
\mathbb{R}^{N}$.

\begin{definition}
\label{d2.1}\emph{(1) A sequence }$\left( u_{\varepsilon }\right)
_{\varepsilon >0}\subset L^{p}\left( \Omega \right) $\emph{\ }$(1\leq
p<\infty )$\emph{\ is said to }weakly $\Sigma $-converge\emph{\ in }$%
L^{p}\left( \Omega \right) $\emph{\ to some }$u_{0}\in L^{p}(\Omega ;%
\mathcal{B}_{A}^{p})$\emph{\ if as }$\varepsilon \rightarrow 0$\emph{, }%
\begin{equation}
\int_{\Omega }u_{\varepsilon }\left( x\right) \psi ^{\varepsilon }\left(
x\right) dx\rightarrow \iint_{\Omega \times \Delta (A)}\widehat{u}_{0}\left(
x,s\right) \widehat{\psi }\left( x,s\right) dxd\beta \left( s\right)
\label{2.3'}
\end{equation}%
\emph{for all }$\psi \in L^{p^{\prime }}\left( \Omega ;A\right) $\emph{\ }$%
\left( 1/p^{\prime }=1-1/p\right) $\emph{\ where }$\psi ^{\varepsilon
}\left( x\right) =\psi \left( x,x/\varepsilon \right) $\emph{\ and }$%
\widehat{\psi }\left( x,\cdot \right) =\mathcal{G}(\psi \left( x,\cdot
\right) )$\emph{\ }$a.e.$\emph{\ in }$x\in \Omega $\emph{. We denote this by 
}$u_{\varepsilon }\rightarrow u_{0}$\emph{\ in }$L^{p}(\Omega )$\emph{-weak }%
$\Sigma $\emph{.}

\noindent \emph{(2) A sequence }$(u_{\varepsilon })_{\varepsilon >0}\subset
L^{p}(\Omega )$\emph{\ }$(1\leq p<\infty )$\emph{\ is said to }strongly $%
\Sigma $-converge\emph{\ in }$L^{p}(\Omega )$\emph{\ to some }$u_{0}\in
L^{p}(\Omega ;\mathcal{B}_{A}^{p})$\emph{\ if it is weakly }$\Sigma $\emph{%
-convergent and further satisfies the following condition: }%
\begin{equation*}
\left\Vert u_{\varepsilon }\right\Vert _{L^{p}(\Omega )}\rightarrow
\left\Vert \widehat{u}_{0}\right\Vert _{L^{p}(\Omega \times \Delta (A))}.
\end{equation*}%
\emph{We denote this by }$u_{\varepsilon }\rightarrow u_{0}$\emph{\ in }$%
L^{p}(\Omega )$\emph{-strong }$\Sigma $\emph{.}
\end{definition}

We recall here that $\widehat{u}_{0}=\mathcal{G}_{1}\circ u_{0}$ and $%
\widehat{\psi }=\mathcal{G}\circ \psi $, $\mathcal{G}_{1}$ being the
isometric isomorphism sending $\mathcal{B}_{A}^{p}$ onto $L^{p}(\Delta (A))$
and $\mathcal{G}$, the Gelfand transformation on $A$.

In the sequel the letter $E$ will throughout denote a fundamental sequence,
that is, any ordinary sequence $E=(\varepsilon _{n})_{n}$ (integers $n\geq 0$%
) with $0<\varepsilon _{n}\leq 1$ and $\varepsilon _{n}\rightarrow 0$ as $%
n\rightarrow \infty $. The following result holds.

\begin{theorem}
\label{t2.2}\emph{(i)} Any bounded sequence $(u_{\varepsilon })_{\varepsilon
\in E}$ in $L^{p}(\Omega )$ (where $1<p<\infty $) admits a subsequence which
is weakly $\Sigma $-convergent in $L^{p}(\Omega )$.

\noindent \emph{(ii)} Any uniformly integrable sequence $(u_{\varepsilon
})_{\varepsilon \in E}$ in $L^{1}(\Omega )$ admits a subsequence which is
weakly $\Sigma $-convergent in $L^{1}(\Omega )$.
\end{theorem}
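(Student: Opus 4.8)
The plan is to realize weak $\Sigma$-convergence as the weak-$\ast$ type convergence of the linear functionals
\[
L_\varepsilon(\psi)=\int_\Omega u_\varepsilon(x)\psi^\varepsilon(x)\,dx,\qquad \psi\in L^{p'}(\Omega;A),
\]
and then to identify the limit functional with integration against a suitable $\widehat{u}_0$. For part (i) the first step is the uniform bound $|L_\varepsilon(\psi)|\le \|u_\varepsilon\|_{L^p(\Omega)}\|\psi^\varepsilon\|_{L^{p'}(\Omega)}\le C\|\psi\|_{L^{p'}(\Omega;A)}$, with $C=\sup_{\varepsilon\in E}\|u_\varepsilon\|_{L^p(\Omega)}<\infty$, using $\|\psi^\varepsilon\|_{L^{p'}(\Omega)}^{p'}=\int_\Omega|\psi(x,x/\varepsilon)|^{p'}\,dx\le\|\psi\|_{L^{p'}(\Omega;A)}^{p'}$. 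The key refinement, which exploits the mean value property of $A$, is that for $\psi\in L^{p'}(\Omega;A)$,
\[
\int_\Omega|\psi^\varepsilon|^{p'}\,dx\longrightarrow\int_\Omega M\!\left(|\psi(x,\cdot)|^{p'}\right)dx=\|\widehat{\psi}\|_{L^{p'}(\Omega\times\Delta(A))}^{p'}\quad(\varepsilon\to0),
\]
via $M(|g|^{p'})=\int_{\Delta(A)}|\widehat{g}|^{p'}\,d\beta$ and a density reduction to $\psi=\sum_k\phi_k\otimes g_k$ with $\phi_k\in\mathcal{C}_0^\infty(\Omega)$, $g_k\in A$. This sharp limit is what will force the limit object to live in $L^p$ rather than in an unwieldy dual.

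The second step is to extract a subsequence $E'\subset E$ along which $L_\varepsilon(\psi)$ converges for every $\psi$ to a limit $L(\psi)$; this $L$ is linear, and taking the $\limsup$ in the two estimates above gives the sharp bound $|L(\psi)|\le C\|\widehat{\psi}\|_{L^{p'}(\Omega\times\Delta(A))}$. I expect this extraction to be the main obstacle: $(L_\varepsilon)$ is a uniformly bounded net in the dual of $L^{p'}(\Omega;A)$, so Banach--Alaoglu yields a weak-$\ast$ cluster point, but to upgrade this to a genuine \emph{subsequence} one needs sequential weak-$\ast$ compactness, hence separability of the test space. I would circumvent the nonseparability of $A$ by passing to the separable closed subalgebra $A_0\subset A$ generated by the countably many data entering the sequence, diagonally extracting in the separable space $L^{p'}(\Omega;A_0)$, and then extending $L$ by continuity to all of $L^{p'}(\Omega;A)$ using the sharp bound just obtained.

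The third step is the representation. Since $\mathcal{G}$ maps $A$ isometrically onto $\mathcal{C}(\Delta(A))$, the range $\{\widehat{\psi}:\psi\in L^{p'}(\Omega;A)\}$ is dense in $L^{p'}(\Omega\times\Delta(A))$; together with $|L(\psi)|\le C\|\widehat{\psi}\|_{L^{p'}(\Omega\times\Delta(A))}$ this shows that $\psi\mapsto L(\psi)$ descends to a bounded functional on $L^{p'}(\Omega\times\Delta(A))$. As $1<p<\infty$ this space is reflexive, so by Riesz representation there is $w\in L^p(\Omega\times\Delta(A))$ with $L(\psi)=\iint_{\Omega\times\Delta(A)}w\,\widehat{\psi}\,dx\,d\beta$. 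Transporting $w$ back through the isometry $\mathcal{G}_1:\mathcal{B}_A^p\to L^p(\Delta(A))$ produces $u_0\in L^p(\Omega;\mathcal{B}_A^p)$ with $\widehat{u}_0=w$, which is precisely $u_\varepsilon\to u_0$ in $L^p(\Omega)$-weak $\Sigma$ along $E'$.

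For part (ii) the difficulty is that $L^1$ is not reflexive, so the final representation could a priori carry a singular part; uniform integrability is exactly what excludes this. I would reduce to (i) by truncation: writing $u_\varepsilon=T_Mu_\varepsilon+(u_\varepsilon-T_Mu_\varepsilon)$ with $T_M$ the truncation at height $M$, each $(T_Mu_\varepsilon)_\varepsilon$ is bounded in $L^\infty(\Omega)\subset L^2(\Omega)$ and hence, by part (i), has a weakly $\Sigma$-convergent subsequence with limit $v_0^M\in L^2(\Omega;\mathcal{B}_A^2)$. Uniform integrability gives $\sup_\varepsilon\|u_\varepsilon-T_Mu_\varepsilon\|_{L^1(\Omega)}\to0$ as $M\to\infty$, which forces $(v_0^M)_M$ to be Cauchy in $L^1(\Omega;\mathcal{B}_A^1)$; a diagonal procedure in $M$ then yields a single subsequence and a limit $u_0\in L^1(\Omega;\mathcal{B}_A^1)$ for which the defining convergence holds against all $\psi\in L^\infty(\Omega;A)$. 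Equivalently one may invoke the Dunford--Pettis theorem directly, its role being to guarantee that the limit functional is absolutely continuous and thus represented by an element of $L^1(\Omega\times\Delta(A))$.
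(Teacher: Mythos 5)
The paper offers no proof of Theorem \ref{t2.2}; it is imported from the references (\cite{NA, CMP}), so there is nothing to compare step by step. Your overall architecture --- treating $L_{\varepsilon}(\psi)=\int_{\Omega}u_{\varepsilon}\psi^{\varepsilon}dx$ as uniformly bounded functionals, using the mean value property to obtain the sharp asymptotic bound $\limsup_{\varepsilon}\left\vert L_{\varepsilon}(\psi)\right\vert \leq C\Vert \widehat{\psi}\Vert _{L^{p^{\prime}}(\Omega\times\Delta(A))}$, representing the limit functional on the reflexive space $L^{p^{\prime}}(\Omega\times\Delta(A))$ and pulling back through $\mathcal{G}_{1}$, then handling $p=1$ by truncation plus uniform integrability --- is the standard route and is sound in outline.

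The genuine gap sits exactly where you yourself locate the main obstacle: the extraction of a \emph{subsequence} rather than a subnet. Your circumvention does not work as stated. First, for an arbitrary bounded sequence $(u_{\varepsilon})\subset L^{p}(\Omega)$ there are no ``$A$-valued data entering the sequence,'' so the subalgebra $A_{0}$ is not defined. Second, and more seriously, even after fixing some separable closed subalgebra $A_{0}\subset A$ and extracting so that $L(\psi)=\lim L_{\varepsilon}(\psi)$ exists for all $\psi\in L^{p^{\prime}}(\Omega;A_{0})$, the proposed ``extension by continuity'' with respect to the seminorm $\Vert \widehat{\psi}\Vert _{L^{p^{\prime}}(\Omega\times\Delta(A))}$ only produces a functional on the closure of $\mathcal{G}(L^{p^{\prime}}(\Omega;A_{0}))$ in $L^{p^{\prime}}(\Omega\times\Delta(A))$; it gives no information whatsoever about $\lim_{\varepsilon}L_{\varepsilon}(\psi)$ for $\psi\in L^{p^{\prime}}(\Omega;A)\setminus L^{p^{\prime}}(\Omega;A_{0})$, which is what Definition \ref{d2.1} requires. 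Moreover that closure is in general a proper subspace: for $A=$ AP$(\mathbb{R}^{N})$ the space $L^{2}(\Delta(A),\beta)$ is non-separable (the characters $y\mapsto e^{i\lambda\cdot y}$, $\lambda\in\mathbb{R}^{N}$, form an uncountable orthonormal family), and the Gelfand image of a separable subalgebra lies in the closed span of countably many characters, so your approximation argument cannot reach a general test function. The honest fixes are either to assume $A$ separable (then $L^{p^{\prime}}(\Omega;A)$ is separable and a diagonal extraction over a countable dense set closes the argument --- this is the hypothesis under which the cited compactness results operate), or to settle for weak$\ast$ convergent subnets via Banach--Alaoglu. The remaining points are minor but should be stated: one needs $\left\vert g\right\vert ^{p^{\prime}}\in A$ for $g\in A$ (Stone--Weierstrass, using that $A$ is a closed, conjugation-stable subalgebra containing the constants) for $M(\left\vert \psi(x,\cdot)\right\vert ^{p^{\prime}})$ to make sense; in part (ii) the inclusion $L^{\infty}(\Omega)\subset L^{2}(\Omega)$ requires $\left\vert \Omega\right\vert <\infty$, and you should instead use $\Vert T_{M}u_{\varepsilon}\Vert _{L^{2}}^{2}\leq M\sup_{\varepsilon}\Vert u_{\varepsilon}\Vert _{L^{1}}$; and for unbounded $\Omega$ you must justify testing the $L^{2}$-weak $\Sigma$ limits of the truncates against $\psi\in L^{\infty}(\Omega;A)$ rather than only against $L^{2}(\Omega;A)$.
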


Below is one fundamental result involving the gradient of sequences.

\begin{theorem}
\label{t2.3}Let $1<p<\infty $. Let $(u_{\varepsilon })_{\varepsilon \in E}$
be a bounded sequence in $W^{1,p}(\Omega )$. Then there exist a subsequence $%
E^{\prime }$ of $E$, and a couple $(u_{0},u_{1})\in W^{1,p}(\Omega
;I_{A}^{p})\times L^{p}(\Omega ;\mathcal{B}_{A}^{1,p})$ such that, as $%
E^{\prime }\ni \varepsilon \rightarrow 0$, 
\begin{equation*}
u_{\varepsilon }\rightarrow u_{0}\ \text{in }L^{p}(\Omega )\text{-weak }%
\Sigma \text{;}
\end{equation*}%
\begin{equation*}
\frac{\partial u_{\varepsilon }}{\partial x_{i}}\rightarrow \frac{\partial
u_{0}}{\partial x_{i}}+\frac{\overline{\partial }u_{1}}{\partial y_{i}}\text{%
\ in }L^{p}(\Omega )\text{-weak }\Sigma \text{, }1\leq i\leq N.
\end{equation*}
\end{theorem}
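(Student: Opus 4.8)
The plan is to establish the two-scale (Σ-convergence) limit for gradients by combining the compactness result of Theorem \ref{t2.2} with the De Rham characterization of Corollary \ref{c1}. First I would extract subsequences. Since $(u_\varepsilon)$ is bounded in $W^{1,p}(\Omega)$, both $(u_\varepsilon)$ and $(\partial u_\varepsilon/\partial x_i)$ are bounded in $L^p(\Omega)$, so by Theorem \ref{t2.2}(i) I can pass to a subsequence $E'$ along which $u_\varepsilon \to u_0$ and $\partial u_\varepsilon/\partial x_i \to v_i$ in $L^p(\Omega)$-weak $\Sigma$, for some $u_0 \in L^p(\Omega;\mathcal{B}_A^p)$ and $v_i \in L^p(\Omega;\mathcal{B}_A^p)$, $1\le i\le N$. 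A standard argument (testing against $\psi^\varepsilon$ with $\psi$ independent of the fast variable, i.e. $\psi\in\mathcal{D}(\Omega)$) shows that the weak $L^p(\Omega)$-limit of $u_\varepsilon$ is $M(\widehat{u}_0)=\int_{\Delta(A)}\widehat{u}_0\,d\beta$, and that this limit lies in $W^{1,p}(\Omega)$ with $\partial/\partial x_i\big(\int_{\Delta(A)}\widehat{u}_0\,d\beta\big)=\int_{\Delta(A)}\widehat{v}_i\,d\beta$; after identifying $u_0$ with its mean I may take $u_0\in W^{1,p}(\Omega)$, independent of $s$, i.e. $u_0\in W^{1,p}(\Omega;I_A^p)$.

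The crux is to show that the oscillating part of the limit gradient is itself a gradient in the fast variable. I would characterize $w_i := v_i - \partial u_0/\partial x_i$ and prove that $w=(w_1,\dots,w_N)$, as an element of $L^p(\Omega;(\mathcal{B}_A^p)^N)$, is orthogonal to the divergence-free test fields in the sense required by Corollary \ref{c1}. Concretely, for $\Phi\in\mathcal{D}_A(\mathbb{R}^N)^N$ with $\overline{\Div}_y\Phi=0$ and for $\varphi\in\mathcal{D}(\Omega)$, I test the convergence of $\partial u_\varepsilon/\partial x_i$ against $\varphi(x)\widehat{\Phi}_i^\varepsilon$ (using that $\mathcal{G}_1$ identifies $\mathcal{B}_A^p$ with $L^p(\Delta(A))$ and recalling the admissibility of such oscillating test functions). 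Integrating by parts in $x$ moves the $x$-derivative off $u_\varepsilon$, and the key point is that the $\varepsilon^{-1}$-singular term, which carries the factor $\overline{\Div}_y\Phi$, vanishes identically because $\Phi$ is divergence-free in the fast variable. Passing to the limit along $E'$ leaves only the bounded terms, and after letting the scale separate one arrives at
\begin{equation*}
\iint_{\Omega\times\Delta(A)}\widehat{w}_i(x,s)\,\widehat{\Phi}_i(s)\,\varphi(x)\,dx\,d\beta(s)=0
\end{equation*}
for every such $\varphi$ and $\Phi$, hence $\int_{\Delta(A)}\widehat{w}(x,\cdot)\cdot\widehat{\Phi}\,d\beta=0$ for a.e. $x$ and all divergence-free $\Phi\in\mathcal{D}_A(\mathbb{R}^N)^N$.

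At this point Corollary \ref{c1} applies for almost every fixed $x\in\Omega$: it yields, for a.e. $x$, a function $u_1(x,\cdot)\in\mathcal{B}_A^{1,p}$, unique modulo $I_A^p$, with $\overline{\nabla}_y u_1(x,\cdot)=w(x,\cdot)$, i.e. $v_i=\partial u_0/\partial x_i+\overline{\partial}u_1/\partial y_i$. The remaining task is measurability and integrability of $x\mapsto u_1(x,\cdot)$ in the quotient space $\mathcal{B}_A^{1,p}/I_A^p$; this follows from the uniqueness in Corollary \ref{c1} (which makes the selection canonical once we fix the gradient-norm representative of Remark \ref{r1}) together with the bound $\|u_1(x,\cdot)\|_{\#,p}\le c\,\|w(x,\cdot)\|_p$ inherited from the estimate $\|f\|_p\le c\|L\|$ in Theorem \ref{t1}, whence $u_1\in L^p(\Omega;\mathcal{B}_A^{1,p})$ after integrating the $p$-th power over $\Omega$. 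I expect the main obstacle to be the careful justification of the double limit and the admissibility of the oscillating test fields $\varphi(x)\widehat{\Phi}_i^\varepsilon$ in the weak $\Sigma$-convergence, i.e. verifying that $\widehat{\Phi}_i$ pulls back to an element of $A$ along $\varphi\in\mathcal{D}(\Omega)$ so that $\varphi\otimes\Phi_i\in L^{p'}(\Omega;A)$, together with the measurable-selection bookkeeping needed to promote the pointwise-in-$x$ conclusion of Corollary \ref{c1} to an $L^p(\Omega)$-valued statement.
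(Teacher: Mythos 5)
Your overall architecture --- compactness via Theorem \ref{t2.2}, orthogonality of the residual gradient field to divergence-free oscillating test fields, then Corollary \ref{c1} applied pointwise in $x$ with a measurable selection and the gradient-norm bound --- is exactly the paper's, and your handling of the divergence-free step and of the selection/integrability of $u_1$ is sound. The gap is in your treatment of $u_0$. You never prove that $u_0(x,\cdot)$ is an invariant function, i.e.\ that $\overline{\nabla}_{y}u_0=0$; in the paper this is the first substantive step, obtained by testing $\varepsilon\,\nabla u_{\varepsilon}$ against oscillating fields $\Phi^{\varepsilon}$ with $\Phi\in(\mathcal{C}_0^{\infty}(\Omega)\otimes A^{\infty})^{N}$ and integrating by parts, which yields $\iint_{\Omega\times\Delta(A)}\widehat{u}_{0}\,\widehat{\Div}\,\widehat{\Phi}\,dx\,d\beta=0$ in the limit. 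Your substitute --- testing only against $\psi\in\mathcal{C}_0^{\infty}(\Omega)$ independent of the fast variable and then ``identifying $u_0$ with its mean'' --- does not work: such tests determine only $M(\widehat{u}_0)$ and say nothing about the fast oscillations of $u_0$, and the weak $\Sigma$-limit is a uniquely determined element of $L^{p}(\Omega;\mathcal{B}_{A}^{p})$ that you are not free to replace by its mean.

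Moreover, the identification ``invariant $=$ independent of $s$'' is false for general algebras with mean value: $I_{A}^{p}$ strictly contains the constants unless $A$ is ergodic (Remark \ref{r2.1}), which is precisely why the theorem asserts $u_0\in W^{1,p}(\Omega;I_{A}^{p})$ rather than $W^{1,p}(\Omega)$. The error then propagates: with $u_0$ replaced by its mean, your residual $w=v-\nabla_{x}M(\widehat{u}_0)$ differs from the correct residual $v-\nabla_{x}u_0$ by the invariant field $\nabla_{x}(u_0-M(\widehat{u}_0))$, which is in general neither orthogonal to the divergence-free test fields nor a fast gradient, so Corollary \ref{c1} would not produce the asserted decomposition $v_{i}=\partial u_{0}/\partial x_{i}+\overline{\partial}u_{1}/\partial y_{i}$. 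The repair is to insert the $\varepsilon$-scaled test-function argument to obtain $u_0\in L^{p}(\Omega;I_{A}^{p})$, then use the constant-in-$y$ specialization of your divergence-free identity (the paper's (\ref{2.4})) to upgrade this to $W^{1,p}(\Omega;I_{A}^{p})$, keeping $u_0$ itself (not its mean) throughout.
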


\begin{proof}
Since the sequences $(u_{\varepsilon })_{\varepsilon \in E}$ and $(\nabla
u_{\varepsilon })_{\varepsilon \in E}$ are bounded respectively in $%
L^{p}(\Omega )$ and in $L^{p}(\Omega )^{N}$, there exist a subsequence $%
E^{\prime }$ of $E$ and $u_{0}\in L^{p}(\Omega ;\mathcal{B}_{A}^{p})$, $%
v=(v_{j})_{j}\in L^{p}(\Omega ;\mathcal{B}_{A}^{p})^{N}$ such that $%
u_{\varepsilon }\rightarrow u_{0}\ $in $L^{p}(\Omega )$-weak $\Sigma $ and $%
\frac{\partial u_{\varepsilon }}{\partial x_{j}}\rightarrow v_{j}$ in $%
L^{p}(\Omega )$-weak $\Sigma $. For $\Phi \in (\mathcal{C}_{0}^{\infty
}(\Omega )\otimes A^{\infty })^{N}$ we have 
\begin{equation*}
\int_{\Omega }\varepsilon \nabla u_{\varepsilon }\cdot \Phi ^{\varepsilon
}dx=-\int_{\Omega }\left( u_{\varepsilon }(\Div_{y}\Phi )^{\varepsilon
}+\varepsilon u_{\varepsilon }(\Div\Phi )^{\varepsilon }\right) dx.
\end{equation*}%
Letting $E^{\prime }\ni \varepsilon \rightarrow 0$ we get%
\begin{equation*}
-\iint_{\Omega \times \Delta (A)}\widehat{u}_{0}\widehat{\Div}\widehat{\Phi }%
dxd\beta =0.
\end{equation*}%
This shows that $\overline{\nabla }_{y}u_{0}=0$, which means that $%
u_{0}(x,\cdot )\in I_{A}^{p}$ (see (\ref{2.5})), that is, $u_{0}\in
L^{p}(\Omega ;I_{A}^{p})$. Next let $\Phi _{\varepsilon }(x)=\varphi (x)\Psi
(x/\varepsilon )$ ($x\in \Omega $) with $\varphi \in \mathcal{C}_{0}^{\infty
}(\Omega )$ and $\Psi =(\psi _{j})_{1\leq j\leq N}\in (A^{\infty })^{N}$
with ${\Div}_{y}\Psi =0$. Clearly%
\begin{equation*}
\sum_{j=1}^{N}\int_{\Omega }\frac{\partial u_{\varepsilon }}{\partial x_{j}}%
\varphi \psi _{j}^{\varepsilon }dx=-\sum_{j=1}^{N}\int_{\Omega
}u_{\varepsilon }\psi _{j}^{\varepsilon }\frac{\partial \varphi }{\partial
x_{j}}dx
\end{equation*}%
where $\psi _{j}^{\varepsilon }(x)=\psi _{j}(x/\varepsilon )$. Passing to
the limit in the above equation when $E^{\prime }\ni \varepsilon \rightarrow
0$ we get 
\begin{equation}
\sum_{j=1}^{N}\iint_{\Omega \times \Delta (A)}\widehat{v}_{j}\varphi 
\widehat{\psi }_{j}dxd\beta =-\sum_{j=1}^{N}\iint_{\Omega \times \Delta (A)}%
\widehat{u}_{0}\widehat{\psi }_{j}\frac{\partial \varphi }{\partial x_{j}}%
dxd\beta .  \label{2.3}
\end{equation}%
First, taking $\Phi =(\varphi \delta _{ij})_{1\leq i\leq N}$ with $\varphi
\in \mathcal{C}_{0}^{\infty }(\Omega )$ (for each fixed $1\leq j\leq N$) in (%
\ref{2.3}) we obtain 
\begin{equation}
\int_{\Omega }M(v_{j})\varphi dx=-\int_{\Omega }M(u_{0})\frac{\partial
\varphi }{\partial x_{j}}dx  \label{2.4}
\end{equation}%
and reminding that $M(v_{j})\in L^{p}(\Omega )$ we have by (\ref{2.4}) that $%
\frac{\partial u_{0}}{\partial x_{j}}\in L^{p}(\Omega ;I_{A}^{p})$, where $%
\frac{\partial u_{0}}{\partial x_{j}}$ is the distributional derivative of $%
u_{0}$ with respect to $x_{j}$. We deduce that $u_{0}\in W^{1,p}(\Omega
;I_{A}^{p})$. Coming back to (\ref{2.3}) we get 
\begin{equation*}
\iint_{\Omega \times \Delta (A)}\left( \widehat{\mathbf{v}}-\nabla \widehat{u%
}_{0}\right) \cdot \widehat{\Psi }\varphi dxd\beta =0\text{,}
\end{equation*}%
and so, as $\varphi $ is arbitrarily fixed, 
\begin{equation*}
\int_{\Delta (A)}\left( \widehat{\mathbf{v}}(x,s)-\nabla \widehat{u}%
_{0}(x,s)\right) \cdot \widehat{\Psi }(s)d\beta =0
\end{equation*}%
for all $\Psi $ as above and for a.e. $x$. Therefore we infer from Corollary %
\ref{c1} the existence of a function $u_{1}(x,\cdot )\in \mathcal{B}%
_{A}^{1,p}$ such that 
\begin{equation*}
\mathbf{v}(x,\cdot )-\nabla u_{0}(x,\cdot )=\overline{\nabla }%
_{y}u_{1}(x,\cdot )
\end{equation*}%
for a.e. $x$. From which the existence of a function $u_{1}:x\mapsto
u_{1}(x,\cdot )$ with values in $\mathcal{B}_{A}^{1,p}$ such that $\mathbf{v}%
=\nabla u_{0}+\overline{\nabla }_{y}u_{1}$.
\end{proof}

\begin{remark}
\label{r2.1}\emph{If we assume the algebra }$A$\emph{\ to be ergodic, then }$%
I_{A}^{p}$\emph{\ consists of constant functions, so that the function }$%
u_{0}$\emph{\ in Theorem \ref{t2.3} does not depend on }$y$\emph{, that is, }%
$u_{0}\in W^{1,p}(\Omega )$\emph{. We thus recover the already known result
proved in \cite{NA} in the case of ergodic algebras.}
\end{remark}

\section{On the interplay between $\Sigma $-convergence and convolution}

In order to take full advantage the results of Section 3, we assume
throughout this section that the algebra $A$ is introverted. Then its
spectrum is a compact topological semigroup.

With this in mind, let $a\in \mathbb{R}^{N}$ be fixed. Appealing to Theorem %
\ref{th5}, we may assume without lost of generality that $(\delta
_{a/\varepsilon })_{\varepsilon >0}$ is a net in the compact group $K(\Delta
(A))$, hence it possesses a weak$\ast $ cluster point $r\in K(\Delta (A))$.
In the sequel we shall consider a subnet still denoted by $(\delta
_{a/\varepsilon })_{\varepsilon >0}$ (if there is no danger of confusion)
that converges to $r$ in the relative topology of $\Delta (A)$, i.e. 
\begin{equation}
\delta _{\frac{a}{\varepsilon }}\rightarrow r\text{ in }K(\Delta (A))\text{%
-weak}\ast \text{ as }\varepsilon \rightarrow 0\text{.}  \label{2.9}
\end{equation}%
Finally let $\Omega $ be an open subset of $\mathbb{R}^{N}$, and let $%
(u_{\varepsilon })_{\varepsilon >0}$ be a sequence in $L^{p}(\Omega )$ ($%
1\leq p<\infty $) which is weakly $\Sigma $-convergent to $u_{0}\in
L^{p}(\Omega ;\mathcal{B}_{A}^{p})$. We will see that a micro-translation of
the sequence $u_{\varepsilon }$ induces a micro-translation on its limit,
while a macro-translation of $u_{\varepsilon }$ induces both micro- and
macro-translations on the limit $u_{0}$. This is the main goal of the
following result.

\begin{theorem}
\label{p2.4}Let $(u_{\varepsilon })_{\varepsilon >0}$ be a sequence in $%
L^{p}(\Omega )$ ($1\leq p<\infty $) and let the sequences $(v_{\varepsilon
})_{\varepsilon >0}$ and $(w_{\varepsilon })_{\varepsilon >0}$ be defined by 
\begin{equation*}
v_{\varepsilon }(x)=u_{\varepsilon }(x+a)\ (x\in \Omega -a)\text{\ and\ }%
w_{\varepsilon }(x)=u_{\varepsilon }(x+\varepsilon a)\text{\ \ }(x\in \Omega
).
\end{equation*}%
Finally, let $u_{0}\in L^{p}(\Omega ;\mathcal{B}_{A}^{p})$ and assume that $%
u_{\varepsilon }\rightarrow u_{0}$ in $L^{p}(\Omega )$-weak $\Sigma $ (resp.
-strong $\Sigma $).
\end{theorem}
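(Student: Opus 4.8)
The plan is to prove the two announced convergences separately: that the micro-translation $w_\varepsilon$ converges ($\Sigma$-weakly, resp.\ strongly) to $w_0=T(a)u_0$, and that the macro-translation $v_\varepsilon$ converges on $\Omega-a$ to the $v_0$ given by $\widehat{v}_0(x,s)=\widehat{u}_0(x+a,sr)$, where $r\in K(\Delta(A))$ is the cluster point fixed in (\ref{2.9}). In both cases I would first reduce, by density of $\mathcal{C}_0(\Omega)\otimes A$ in $L^{p'}(\Omega;A)$ and the uniform $L^p$-bound on $(u_\varepsilon)$, to product test functions $\psi=\chi\otimes\ell$ with $\chi\in\mathcal{C}_0$ and $\ell\in A$. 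The common mechanism is a change of variables transferring the spatial translation of $u_\varepsilon$ onto a translation of the test function; one then invokes the weak $\Sigma$-convergence of $(u_\varepsilon)$ and rewrites the limit using invariance of $\beta$ (the Haar measure of $K(\Delta(A))$, by Theorem \ref{th4}) together with the identity $\widehat{\tau_b\ell}=\tau_{\varphi(b)}\widehat{\ell}$ from Corollary \ref{co2}.

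For the micro-translation, the substitution $y=x+\varepsilon a$ turns $\int_\Omega w_\varepsilon\psi^\varepsilon\,dx$ into $\int u_\varepsilon(y)\,\chi(y-\varepsilon a)\,[\tau_{-a}\ell](y/\varepsilon)\,dy$, the decisive point being that the induced micro-shift $\tau_{-a}\ell$ is \emph{independent of} $\varepsilon$ and still lies in $A$. Since $\chi(y-\varepsilon a)-\chi(y)\to0$ uniformly, the $L^p$-bound makes this perturbation vanish in the limit; weak $\Sigma$-convergence against the fixed $\chi\otimes\tau_{-a}\ell$ gives the limit, and the change of spectral variable $s\mapsto s\delta_a$ (which preserves $\beta$ as $\mathcal{T}(a)$ is measure-preserving) identifies $\widehat{w}_0(x,s)=\widehat{u}_0(x,s\delta_a)=\widehat{T(a)u_0}(x,s)$. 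The strong case follows because $\|w_\varepsilon\|_{L^p(\Omega)}$ equals $\|u_\varepsilon\|_{L^p}$ up to a vanishing boundary layer, while $s\mapsto s\delta_a$ preserves the $L^p(\Delta(A))$-norm of $\widehat{u}_0$.

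For the macro-translation, $y=x+a$ turns $\int_{\Omega-a}v_\varepsilon\psi^\varepsilon\,dx$ into $\int_\Omega u_\varepsilon(y)\,\chi(y-a)\,[\tau_{-a/\varepsilon}\ell](y/\varepsilon)\,dy$. Now the induced micro-shift is $\tau_{-a/\varepsilon}\ell$, whose Gelfand transform $\widehat{\ell}(\,\cdot\,\delta_{-a/\varepsilon})$ converges \emph{pointwise} on $\Delta(A)$ to $\widehat{\ell}(\,\cdot\,r^{-1})$, because $\delta_{-a/\varepsilon}=(\delta_{a/\varepsilon})^{-1}\to r^{-1}$ in the compact group $K(\Delta(A))$ (by (\ref{2.9}) and Theorem \ref{th5}) and multiplication is separately continuous. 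Setting $\ell_r=\mathcal{G}^{-1}(\widehat{\ell}(\,\cdot\,r^{-1}))\in A$, I would replace $\tau_{-a/\varepsilon}\ell$ by the \emph{fixed} $\ell_r$, apply weak $\Sigma$-convergence against $\chi\otimes\ell_r$, and finally change spectral variable $s\mapsto sr$ (again $\beta$-preserving) to obtain $\widehat{v}_0(x,s)=\widehat{u}_0(x+a,sr)$, exactly the combined macro- and micro-translation.

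The main obstacle is justifying this replacement, i.e.\ showing that $\int_\Omega u_\varepsilon(y)\,\chi(y-a)\,[\tau_{-a/\varepsilon}\ell-\ell_r](y/\varepsilon)\,dy\to0$. By H\"older this is controlled by $\|(\tau_{-a/\varepsilon}\ell-\ell_r)^\varepsilon\|_{L^{p'}(\mathrm{supp}\,\chi(\cdot-a))}$, and although the Besicovitch seminorm $\|\tau_{-a/\varepsilon}\ell-\ell_r\|_{p'}=(\int_{\Delta(A)}|\widehat{\ell}(s\delta_{-a/\varepsilon})-\widehat{\ell}(sr^{-1})|^{p'}d\beta)^{1/p'}$ does tend to $0$ by dominated convergence on $\Delta(A)$, one must still pass from it to the empirical average $\mathrm{Avg}_{B_{R/\varepsilon}}(|\tau_{-a/\varepsilon}\ell-\ell_r|^{p'})$ on a ball whose radius $R/\varepsilon$ grows together with the $\varepsilon$-dependence of the integrand --- a genuine diagonal (double) limit. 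I expect this to be the crux: it amounts to the \emph{joint} equidistribution of $(\delta_{y/\varepsilon},\,\delta_{y/\varepsilon}\delta_{-a/\varepsilon})$ with respect to $\beta$, and I would resolve it through the introverted structure (Theorem \ref{th3} places $A$ inside the weakly almost periodic functions, so the orbit $\{\tau_b\ell\}$ is weakly relatively compact and its pointwise closure lies in $A$ by Lemma \ref{le1}), observing that in the jointly continuous, almost periodic case the orbit is even norm-compact and the required uniformity is automatic. The norm-convergence half underlying the strong-$\Sigma$ statement is cleaner: writing $|\ell|^{p'}\in A$ and moving the shift back to a macro-translation of the domain reduces $\int|\ell((y-a)/\varepsilon)|^{p'}\,dy$ to a weak-$\ast$ mean-value limit $|\Omega|\,M(|\ell|^{p'})$, free of any diagonal difficulty.
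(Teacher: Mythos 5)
Your treatment of the micro-translation $w_{\varepsilon}$ and of the strong-$\Sigma$ norm identities matches the paper's argument and is fine. The problem is the macro-translation $v_{\varepsilon}$, and it is exactly the point you yourself flag as "the crux": you propose to replace the $\varepsilon$-dependent micro-shift $\tau_{-a/\varepsilon}\ell$ by the fixed function $\ell_{r}=\mathcal{G}^{-1}\bigl(\tau_{r^{-1}}\widehat{\ell}\,\bigr)$ in a single limit, and to control the error $\int_{\Omega}u_{\varepsilon}(y)\chi(y-a)\bigl[\tau_{-a/\varepsilon}\ell-\ell_{r}\bigr](y/\varepsilon)\,dy$ by an $L^{p'}$ bound over a fixed compact set. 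As you correctly observe, the Besicovitch seminorm $\Vert\tau_{-a/\varepsilon}\ell-\ell_{r}\Vert_{p'}\rightarrow 0$ does not control $\int_{K}\vert g_{\varepsilon}(y/\varepsilon)\vert^{p'}dy$ when $g_{\varepsilon}$ itself depends on $\varepsilon$; and the only other available bound, $\Vert\tau_{-a/\varepsilon}\ell-\ell_{r}\Vert_{\infty}=\Vert\tau_{\delta_{-a/\varepsilon}}\widehat{\ell}-\tau_{r^{-1}}\widehat{\ell}\,\Vert_{\mathcal{C}(\Delta(A))}$, tends to $0$ only when the orbit map $s\mapsto\tau_{s}\widehat{\ell}$ is \emph{norm} continuous, which by Theorem \ref{th3}(ii) would force $A$ into the almost periodic functions. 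For a merely introverted algebra (e.g. $\mathcal{B}_{\infty}(\mathbb{R}^{N})$, where the multiplication on $\Delta(A)$ is not jointly continuous, cf. Remark \ref{re3}) the orbit is only weakly compact, so your fallback "the required uniformity is automatic in the almost periodic case" does not cover the case the theorem is actually about. This is a genuine gap, not a technicality.

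The paper closes it with a two-parameter device you do not have: it fixes an auxiliary net $(y_{n})_{n}\subset\mathbb{R}^{N}$, \emph{independent of} $\varepsilon$, with $\delta_{y_{n}}\in K(\Delta(A))$ and $\delta_{y_{n}}\rightarrow r$ weak$\ast$, and splits the integral into a term with test function $\tau_{-y_{n}}\psi$ (a legitimate, $\varepsilon$-free element of $A$, so weak $\Sigma$-convergence applies for each fixed $n$, after which one lets $n\rightarrow\infty$ using separate continuity and the invariance of $\beta$) plus an error term bounded by $c\,\Vert\tau_{\delta_{-a/\varepsilon}}\widehat{\psi}-\tau_{\delta_{-y_{n}}}\widehat{\psi}\Vert_{\infty}$, which is sent to $0$ in the iterated limit $\varepsilon\rightarrow 0$ then $n\rightarrow\infty$ (both nets of translates converging to the same translate by $r^{-1}$). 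If you want to salvage your single-limit replacement you would have to prove precisely the joint equidistribution statement you name; as written, the proposal asserts the conclusion of that step without proving it.
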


\begin{proposition}
\begin{itemize}
\item[(i)] If $p>1$ then 
\begin{equation}
w_{\varepsilon }\rightarrow w_{0}\text{ in }L^{p}(\Omega )\text{-weak }%
\Sigma \text{ (resp. -strong }\Sigma \text{)}  \label{2.10}
\end{equation}%
where $w_{0}$ is defined by $w_{0}(x,y)=u_{0}(x,y+a)$\ for $(x,y)\in \Omega
\times \mathbb{\mathbb{R}}^{N}$. Moreover if $p=1$ then \emph{(\ref{2.10})}
is still valid provided that the sequence $(u_{\varepsilon })_{\varepsilon
>0}$ is uniformly integrable.

\item[(ii)] If \emph{(\ref{2.9})} holds true then 
\begin{equation}
v_{\varepsilon }\rightarrow v_{0}\text{ in }L^{p}(\Omega -a)\text{-weak }%
\Sigma \text{ (resp. -strong }\Sigma \text{)}  \label{2.11}
\end{equation}%
where $v_{0}\in L^{p}(\Omega -a,\mathcal{B}_{A}^{p})$ is defined by $%
\widehat{v}_{0}(x,s)=\widehat{u}_{0}(x+a,sr)$ for $(x,s)\in (\Omega
-a)\times \Delta (A)$.
\end{itemize}
\end{proposition}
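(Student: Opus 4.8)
The plan is to unwind both statements from Definition \ref{d2.1} by testing against oscillating functions, reducing first to a convenient dense class of test functions and then exploiting the semigroup structure of $\Delta(A)$ through Corollary \ref{co2} and the invariance of $\beta$. First I would fix $\psi\in L^{p'}(\cdot\,;A)$ and, using that a weakly $\Sigma$-convergent sequence is bounded in $L^{p}$ when $p>1$ (and that $(u_{\varepsilon})$ is assumed uniformly integrable when $p=1$), reduce by density to $\psi(x,y)=\varphi(x)g(y)$ with $\varphi$ continuous of compact support and $g\in A$. For (i) I would substitute $z=x+\varepsilon a$ in $\int_{\Omega}u_{\varepsilon}(x+\varepsilon a)\psi^{\varepsilon}(x)\,dx$, turning the integrand into $u_{\varepsilon}(z)\,\psi\big(z-\varepsilon a,\;z/\varepsilon-a\big)$; the error made in replacing $\psi(z-\varepsilon a,\cdot)$ by $\psi(z,\cdot)$ tends to $0$ by uniform continuity of $\psi$ in its first slot and the bound (resp. uniform integrability) on $(u_{\varepsilon})$. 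For (ii) I would instead substitute $z=x+a$, so that $\int_{\Omega-a}u_{\varepsilon}(x+a)\psi^{\varepsilon}(x)\,dx=\int_{\Omega}u_{\varepsilon}(z)\,\psi\big(z-a,\;z/\varepsilon-a/\varepsilon\big)\,dz$; here the macroscopic shift is the fixed $a$, while the only surviving $\varepsilon$-dependence in the profile is the \emph{fast} shift by $-a/\varepsilon$.

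For (i) the effective test function is then the fixed one $\widetilde{\psi}(z,y)=\psi(z,y-a)$, so weak $\Sigma$-convergence of $u_{\varepsilon}$ applies directly. It remains only to identify the limit, which I would do with \eqref{111}: since $\widehat{\widetilde{\psi}}(z,s)=\widehat{\psi}(z,s\delta_{-a})$, a change of variable $s\mapsto s\delta_{a}$ in the integral over $\Delta(A)$ --- legitimate because $\beta$ is invariant under $s\mapsto s\delta_{a}$, this being exactly the translation invariance $M(\tau_{a}u)=M(u)$ rewritten via \eqref{111} --- produces $\iint \widehat{u}_{0}(z,s\delta_{a})\,\widehat{\psi}(z,s)\,dz\,d\beta$, and $\widehat{u}_{0}(z,s\delta_{a})=\widehat{w}_{0}(z,s)$ by the same formula. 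The strong case follows because the shift preserves $\|u_{\varepsilon}\|_{L^{p}}$ up to a boundary layer of measure $O(\varepsilon)$, and preserves $\|\widehat{w}_{0}\|_{L^{p}(\Omega\times\Delta(A))}$ by the very same measure invariance.

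For (ii) the new feature is the large fast shift $-a/\varepsilon$, whose spectral counterpart is right multiplication by $\delta_{-a/\varepsilon}$. Invoking \eqref{2.9} together with the fact that $K(\Delta(A))$ is a compact (abelian) topological group (Theorem \ref{th4}), I would first check that $\delta_{-a/\varepsilon}\to r^{-1}$ along the chosen subnet. I would then introduce the \emph{fixed} limiting profile $\Lambda(z,\cdot)=\mathcal{G}^{-1}\big(\tau_{r^{-1}}\widehat{\psi}(z-a,\cdot)\big)\in A$, so that $\widehat{\Lambda}(z,s)=\widehat{\psi}(z-a,sr^{-1})$; testing $u_{\varepsilon}$ against the fixed $\Lambda$ yields $\int_{\Omega}u_{\varepsilon}(z)\Lambda(z,z/\varepsilon)\,dz\to\iint \widehat{u}_{0}(z,s)\,\widehat{\psi}(z-a,sr^{-1})\,dz\,d\beta$. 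A change of variable $s\mapsto sr$, which preserves $\beta$ because $\beta$ restricted to $K(\Delta(A))$ is its (bi-invariant) Haar measure and $r\in K(\Delta(A))$ (Theorem \ref{th4}(ii)), rewrites this as $\iint\widehat{u}_{0}(z,sr)\,\widehat{\psi}(z-a,s)\,dz\,d\beta$; undoing $z=x+a$ this is precisely the target $\iint_{(\Omega-a)\times\Delta(A)}\widehat{v}_{0}(x,s)\widehat{\psi}(x,s)\,dx\,d\beta$ with $\widehat{v}_{0}(x,s)=\widehat{u}_{0}(x+a,sr)$. The strong case is again handled by norm preservation under the shift and the invariance of $\beta$.

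The hard part will be the remaining step for (ii), namely showing that the genuine, $\varepsilon$-dependent profile may be replaced by $\Lambda$, i.e. $\int_{\Omega}u_{\varepsilon}(z)\big[\psi\big(z-a,z/\varepsilon-a/\varepsilon\big)-\Lambda(z,z/\varepsilon)\big]\,dz\to0$. This cannot come from a sup-norm estimate, since $\tau_{-a/\varepsilon}\psi(z-a,\cdot)$ and $\Lambda(z,\cdot)$ have equal size and differ by a genuinely oscillating function; the mechanism is rather that the two profiles share the same $\Sigma$-limit because $\tau_{\delta_{-a/\varepsilon}}\widehat{\psi}(z-a,\cdot)\to\tau_{r^{-1}}\widehat{\psi}(z-a,\cdot)$. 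When the multiplication on $\Delta(A)$ is jointly continuous --- the almost periodic case, where the orbit map $s\mapsto\tau_{s}f$ is strongly continuous by Theorem \ref{th3}(ii) --- this convergence holds in sup norm and the difference is controlled by H\"older's inequality against the bounded (resp. uniformly integrable) sequence $(u_{\varepsilon})$. In the general introverted case only weak continuity of the orbit map is available, since introversion merely forces $A\subset\mathrm{WAP}$ (Theorem \ref{th3}(i)) and Grothendieck's theorem then yields only weak, not strong, compactness of the orbit; the weak convergence of the profiles must then be pushed through the oscillatory integral, which I would attempt by transferring everything to the probability space $(\Delta(A),\mathcal{B}(\Delta(A)),\beta)$ via the measure-preserving flow $\mathcal{T}$ and testing against the fixed measure $\widehat{u}_{0}(z,\cdot)\,d\beta$. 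This interchange, not the algebraic bookkeeping, is where the real work lies.
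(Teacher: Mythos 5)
Your treatment of part (i) is essentially the paper's own argument (change of variables, boundary-layer error controlled by the $L^{p}$ bound or uniform integrability, identification of the limit via (\ref{111}) and the translation invariance of $\beta $), and your identification of the limit in part (ii) --- the change of variables $s\mapsto sr$ justified by the Haar property of $\beta $ on $K(\Delta (A))$ from Theorem \ref{th4} --- also matches the paper. The problem is that for part (ii) you stop exactly at the step that carries all the weight: you never prove that $\int_{\Omega }u_{\varepsilon }(z)\bigl[ \psi (z-a,z/\varepsilon -a/\varepsilon )-\Lambda (z,z/\varepsilon )\bigr] dz\rightarrow 0$, you only announce a plan (``transfer to the probability space via the flow $\mathcal{T}$'') that is not developed and whose feasibility is unclear. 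As it stands the proposal does not establish (\ref{2.11}).

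Moreover, your diagnosis of why this step is hard is off. You assert that a sup-norm estimate cannot work in the general introverted case because Theorem \ref{th3}(i) only gives weak continuity of the orbit map on $\Delta (A)$. But the translations that actually occur here are by $\delta _{-a/\varepsilon }$ and by the limit $r^{-1}$, all of which lie in the kernel $K(\Delta (A))$ (Theorem \ref{th5} and (\ref{2.9})), and $K(\Delta (A))$ is a compact topological \emph{group} (Theorem \ref{th4}), on which multiplication is jointly continuous; consequently its (separately continuous, hence jointly continuous) translation action on the compact space $\Delta (A)$ makes $s\mapsto \tau _{s}\widehat{\psi }$ \emph{norm} continuous for $s$ ranging in $K(\Delta (A))$. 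This is precisely what the paper exploits: it interposes an auxiliary net $(y_{n})_{n}$, independent of $\varepsilon $, with $\delta _{y_{n}}\in K(\Delta (A))$ and $\delta _{y_{n}}\rightarrow r$; the piece of the integral with the \emph{fixed} translate $\psi (\cdot -y_{n})$ converges by ordinary weak $\Sigma $-convergence and then by letting $n\rightarrow \infty $, while the remainder is bounded by $c\left\Vert \tau _{\delta _{-a/\varepsilon }}\widehat{\psi }-\tau _{\delta _{-y_{n}}}\widehat{\psi }\right\Vert _{\infty }$, which vanishes in the iterated limit $\varepsilon \rightarrow 0$, $n\rightarrow \infty $ because both nets of translates converge to $\tau _{r^{-1}}\widehat{\psi }$ in sup norm by the group-continuity just described. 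So the missing step is closed by a sup-norm estimate after all --- the key structural input you did not use is that the oscillating translations live in the kernel, where the semigroup degenerates to a group, combined with the two-parameter ($\varepsilon $ then $n$) approximation that lets the $\Sigma $-convergence hypothesis be applied to an $\varepsilon $-independent test function.
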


\begin{proof}
We split the proof into two parts.

\textit{Part} 1. We consider first the case of weak $\Sigma $-convergence.
Let us first check (\ref{2.10}). Prior to this, let us note that the proof
of (\ref{2.10}) in the special case $p=1$ has been done in \cite{NA}. In the
general situation when $p>1$, the proof is very similar to the one of the
previous case, and for that reason, we just sketch it here. Let $\varphi \in 
\mathcal{C}_{0}^{\infty }(\Omega )$ and let $\psi \in A$; we have 
\begin{eqnarray*}
\int_{\Omega }w_{\varepsilon }\varphi \psi ^{\varepsilon }dx &=&\int_{\Omega
}u_{\varepsilon }(x+\varepsilon a)\varphi (x)\psi \left( \frac{x}{%
\varepsilon }\right) dx \\
&=&\int_{\Omega +\varepsilon a}u_{\varepsilon }(x)\varphi (x-\varepsilon
a)\psi \left( \frac{x}{\varepsilon }-a\right) dx \\
&=&\int_{\Omega }u_{\varepsilon }(x)\varphi (x-\varepsilon a)\psi \left( 
\frac{x}{\varepsilon }-a\right) dx \\
&&\ \ \ -\int_{\Omega \backslash (\Omega +\varepsilon a)}u_{\varepsilon
}(x)\varphi (x-\varepsilon a)\psi \left( \frac{x}{\varepsilon }-a\right) dx
\\
&&\;\;\;\ \ \ +\int_{(\Omega +\varepsilon a)\backslash \Omega
}u_{\varepsilon }(x)\varphi (x-\varepsilon a)\psi \left( \frac{x}{%
\varepsilon }-a\right) dx \\
&=&(I)-(II)+(III).
\end{eqnarray*}%
As in \cite{NA}, we have that 
\begin{eqnarray*}
(I) &\rightarrow &\iint_{\Omega \times K(\Delta (A))}\widehat{u}%
_{0}(x,s)\varphi (x)\widehat{\tau _{-a}\psi }(s)dxd\beta \\
&=&\iint_{\Omega \times K(\Delta (A))}\widehat{u}_{0}(x,s)\varphi (x)\tau
_{\delta _{-a}}\widehat{\psi }(s)dxd\beta \\
&=&\iint_{\Omega \times K(\Delta (A))}\tau _{\delta _{a}}\widehat{u}%
_{0}(x,s)\varphi (x)\widehat{\psi }(s)dxd\beta .
\end{eqnarray*}%
We infer from the inequality 
\begin{equation*}
\int_{(\Omega +\varepsilon a)\Delta \Omega }\left\vert u_{\varepsilon
}(x)\right\vert \left\vert \varphi (x-\varepsilon a)\right\vert \left\vert
\psi \left( \frac{x}{\varepsilon }-a\right) \right\vert dx\leq \left\Vert
\varphi \right\Vert _{\infty }\left\Vert \psi \right\Vert _{\infty }\left( 
\text{meas}[(\Omega +\varepsilon a)\Delta \Omega ]\right) ^{\frac{1}{%
p^{\prime }}}\left\Vert u_{\varepsilon }\right\Vert _{L^{p}(\Omega )}
\end{equation*}%
($(\Omega +\varepsilon a)\Delta \Omega $ being denoting the symmetric
difference between $\Omega +\varepsilon a$ and $\Omega $) that $(II)$ and $%
(III)$ tend to $0$ as $\varepsilon \rightarrow 0$. This proves (\ref{2.10}).

The proof of (\ref{2.11}) is more involved. It has just been done in \cite%
{SW} (in the case of weak $\Sigma $-convergence of course!) under the
restricted assumption that $\Omega $ is bounded and $\Delta (A)$ is a group.
We present here a general proof in which all these assumptions are relaxed.

Let $\varphi \in \mathcal{C}_{0}^{\infty }(\Omega -a)$ and $\psi \in A$. Let 
$(y_{n})_{n}$ be a net in $\mathbb{R}^{N}$ (independent of $\varepsilon $)
such that $\delta _{y_{n}}\in K(\Delta (A))$ and 
\begin{equation*}
\delta _{y_{n}}\rightarrow r\text{ in }K(\Delta (A))\text{-}weak\ast \text{
with }n\text{.}
\end{equation*}%
We have 
\begin{eqnarray*}
&&\int_{\Omega -a}u_{\varepsilon }(x+a)\varphi (x)\psi \left( \frac{x}{%
\varepsilon }\right) dx \\
&=&\int_{\Omega }u_{\varepsilon }(x)\varphi (x-a)\psi \left( \frac{x}{%
\varepsilon }-\frac{a}{\varepsilon }\right) dx \\
&=&\int_{\Omega }u_{\varepsilon }(x)\varphi (x-a)\left[ \psi \left( \frac{x}{%
\varepsilon }-\frac{a}{\varepsilon }\right) -\psi \left( \frac{x}{%
\varepsilon }-y_{n}\right) \right] dx \\
&&+\int_{\Omega }u_{\varepsilon }(x)\varphi (x-a)\psi \left( \frac{x}{%
\varepsilon }-y_{n}\right) dx \\
&=&(I)+(II)
\end{eqnarray*}%
where $(I)=\int_{\Omega }u_{\varepsilon }(x)\varphi (x-a)\left[ \psi \left( 
\frac{x}{\varepsilon }-\frac{a}{\varepsilon }\right) -\psi \left( \frac{x}{%
\varepsilon }-y_{n}\right) \right] dx$ and $(II)=\int_{\Omega
}u_{\varepsilon }(x)\varphi (x-a)\psi \left( \frac{x}{\varepsilon }%
-y_{n}\right) dx$. We first consider $(II)$. For $\varepsilon \rightarrow 0$%
, it holds that 
\begin{eqnarray*}
(II) &\rightarrow &\iint_{\Omega \times K(\Delta (A))}\widehat{u}%
_{0}(x,s)\varphi (x-a)\tau _{\delta _{-y_{n}}}\widehat{\psi }(s)dxd\beta (s)
\\
&=&\iint_{(\Omega -a)\times K(\Delta (A))}\tau _{\delta _{y_{n}}}\widehat{u}%
_{0}(x+a,s)\varphi (x)\widehat{\psi }(s)dxd\beta (s),
\end{eqnarray*}%
and 
\begin{eqnarray*}
&&\lim_{n}\iint_{(\Omega -a)\times K(\Delta (A))}\tau _{\delta _{y_{n}}}%
\widehat{u}_{0}(x+a,s)\varphi (x)\widehat{\psi }(s)dxd\beta (s) \\
&=&\iint_{(\Omega -a)\times K(\Delta (A))}\tau _{r}\widehat{u}%
_{0}(x+a,s)\varphi (x)\widehat{\psi }(s)dxd\beta (s) \\
&=&\iint_{(\Omega -a)\times K(\Delta (A))}\widehat{u}_{0}(x+a,sr)\varphi (x)%
\widehat{\psi }(s)dxd\beta (s).
\end{eqnarray*}%
As for $(I)$, we easily verify (using the fact that $\mathcal{G}$ is an
isometry) that 
\begin{equation*}
\left\vert (I)\right\vert \leq c\left\Vert \tau _{\delta _{-\frac{a}{%
\varepsilon }}}\widehat{\psi }-\tau _{\delta _{-y_{n}}}\widehat{\psi }%
\right\Vert _{\infty }.
\end{equation*}%
Since the mapping $s\mapsto s^{-1}$ is continuous in $K(\Delta (A))$, we get
that $\delta _{-\frac{a}{\varepsilon }}=\delta _{\frac{a}{\varepsilon }%
}^{-1}\rightarrow r^{-1}$ in $K(\Delta (A))$ weak$\ast $ and $\delta
_{-y_{n}}=\delta _{y_{n}}^{-1}\rightarrow r^{-1}$ in $K(\Delta (A))$ weak$%
\ast $. Invoking the uniform continuity of $\widehat{\psi }$, we are led to 
\begin{equation*}
\left\Vert \tau _{\delta _{-\frac{a}{\varepsilon }}}\widehat{\psi }-\tau
_{\delta _{-y_{n}}}\widehat{\psi }\right\Vert _{\infty }\rightarrow 0\text{
as }\varepsilon \rightarrow 0\text{ and next }n\rightarrow \infty \text{.}
\end{equation*}%
(\ref{2.11}) follows thereby.\medskip

\textit{Part} 2. We now assume strong $\Sigma $-convergence of $%
(u_{\varepsilon })_{\varepsilon }$. Then from Part 1, we have (\ref{2.10})
and (\ref{2.11}) in the weak sense. Moreover we deduce from both the
equality $\left\Vert u_{\varepsilon }\right\Vert _{L^{p}(\Omega
)}=\left\Vert u_{\varepsilon }(\cdot +a)\right\Vert _{L^{p}(\Omega -a)}$ and
the translation invariance of the measure $\beta $ that $\left\Vert
u_{\varepsilon }(\cdot +a)\right\Vert _{L^{p}(\Omega -a)}\rightarrow
\left\Vert \widehat{u}_{0}(\cdot +a,\cdot r)\right\Vert _{L^{p}((\Omega
-a)\times K(\Delta (A)))}$. This concludes the proof of (\ref{2.11}) in both
cases (weak and strong $\Sigma $-convergence). The same also holds for (\ref%
{2.10}) in the case of strong $\Sigma $-convergence. The proof is complete.
\end{proof}

The next important result deals with the convergence of convolution
sequences. Let $p,q,m\geq 1$ be real numbers such that $\frac{1}{p}+\frac{1}{%
q}=1+\frac{1}{m}$. Let $(u_{\varepsilon })_{\varepsilon >0}\subset
L^{p}(\Omega )$ and $(v_{\varepsilon })_{\varepsilon >0}\subset L^{q}(%
\mathbb{R}^{N})$ be two sequences. One may view $u_{\varepsilon }$ as
defined in the whole $\mathbb{R}^{N}$ by taking its extension by zero
outside $\Omega $. Define 
\begin{equation*}
(u_{\varepsilon }\ast v_{\varepsilon })(x)=\int_{\mathbb{R}%
^{N}}u_{\varepsilon }(t)v_{\varepsilon }(x-t)dt\ \ (x\in \mathbb{R}^{N}),
\end{equation*}%
which lies in $L^{m}(\mathbb{R}^{N})$ and satisfies the Young's inequality 
\begin{equation}
\left\Vert u_{\varepsilon }\ast v_{\varepsilon }\right\Vert _{L^{m}(\Omega
)}\leq \left\Vert u_{\varepsilon }\right\Vert _{L^{p}(\Omega )}\left\Vert
v_{\varepsilon }\right\Vert _{L^{q}(\Omega )}.  \label{11}
\end{equation}%
We have the following result.

\begin{theorem}
\label{t2.4}Let $(u_{\varepsilon })_{\varepsilon >0}$ and $(v_{\varepsilon
})_{\varepsilon >0}$ be as above. Assume that, as $\varepsilon \rightarrow 0$%
, $u_{\varepsilon }\rightarrow u_{0}$ in $L^{p}(\Omega )$-weak $\Sigma $ and 
$v_{\varepsilon }\rightarrow v_{0}$ in $L^{q}(\mathbb{R}^{N})$-strong $%
\Sigma $, where $u_{0}$ and $v_{0}$ are in $L^{p}(\Omega ;\mathcal{B}%
_{A}^{p})$ and $L^{q}(\mathbb{R}^{N};\mathcal{B}_{A}^{q})$ respectively.
Then, as $\varepsilon \rightarrow 0$, 
\begin{equation*}
u_{\varepsilon }\ast v_{\varepsilon }\rightarrow u_{0}\ast \ast v_{0}\text{
in }L^{m}(\Omega )\text{-weak }\Sigma \text{.}
\end{equation*}
\end{theorem}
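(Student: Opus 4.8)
The plan is to verify the defining relation of weak $\Sigma$-convergence directly. Fix $\psi\in L^{m'}(\Omega;A)$ (with $1/m'=1-1/m$) and set $J_\varepsilon=\int_\Omega (u_\varepsilon\ast v_\varepsilon)(x)\psi^\varepsilon(x)\,dx$, the goal being that $J_\varepsilon\to\iint_{\Omega\times\Delta(A)}\widehat{(u_0\ast\ast v_0)}\,\widehat\psi\,dx\,d\beta$ in the manner of \eqref{2.3'}. Since weakly $\Sigma$-convergent sequences are bounded (test against functions independent of the fast variable to recover ordinary weak $L^p$-convergence), $(u_\varepsilon)$ is bounded in $L^p(\Omega)$ and $(v_\varepsilon)$ in $L^q(\mathbb{R}^N)$; by Young's inequality $(u_\varepsilon\ast v_\varepsilon)$ is bounded in $L^m$ and $|J_\varepsilon|\le\|u_\varepsilon\|_{L^p}\|v_\varepsilon\|_{L^q}\|\psi^\varepsilon\|_{L^{m'}}$. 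This uniform bilinear continuity legitimises the approximation in the next step.

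The essential use of the strong $\Sigma$-convergence of $(v_\varepsilon)$ is a reduction to admissible profiles. I would approximate $v_0$ in $L^q(\mathbb{R}^N;\mathcal{B}_A^q)$ by finite sums $v_{0,\eta}(x,y)=\sum_k\phi_k(x)g_k(y)$ with $\phi_k\in\mathcal{D}(\mathbb{R}^N)$ and $g_k\in A$. For such genuine functions $v_{0,\eta}^\varepsilon:=v_{0,\eta}(\cdot,\cdot/\varepsilon)$ strongly $\Sigma$-converges to $v_{0,\eta}$, and combining this with the strong $\Sigma$-convergence of $v_\varepsilon$ gives $\limsup_\varepsilon\|v_\varepsilon-v_{0,\eta}^\varepsilon\|_{L^q(\mathbb{R}^N)}=\omega(\eta)$ with $\omega(\eta)\to0$. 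By the bilinear bound, $\limsup_\varepsilon|J_\varepsilon(v_\varepsilon)-J_\varepsilon(v_{0,\eta}^\varepsilon)|\le C\omega(\eta)$, while the Young inequality for $\ast\ast$ gives $u_0\ast\ast v_{0,\eta}\to u_0\ast\ast v_0$ in $L^m$. Thus it suffices to treat $v_\varepsilon=v_{0,\eta}^\varepsilon$, and by linearity a single term $v_\varepsilon(x)=\phi(x)g(x/\varepsilon)$.

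For this admissible case I would compute by Fubini. From $(u_\varepsilon\ast v_\varepsilon)(x)=\int_{\mathbb{R}^N}u_\varepsilon(x-s)\phi(s)g(s/\varepsilon)\,ds$ and the substitution $\zeta=x-s$ one gets $J_\varepsilon=\int_{\mathbb{R}^N}u_\varepsilon(\zeta)Q_\varepsilon(\zeta)\,d\zeta$ with $Q_\varepsilon(\zeta)=\int_\Omega\phi(x-\zeta)\,g\!\left(\tfrac{x}{\varepsilon}-\tfrac{\zeta}{\varepsilon}\right)\psi\!\left(x,\tfrac{x}{\varepsilon}\right)dx$. The key observation is that, in the fast variable $x/\varepsilon$, the product $g(x/\varepsilon-\zeta/\varepsilon)\psi(x,x/\varepsilon)$ should average to the correlation $c_x(\zeta/\varepsilon)$, where $c_x(w)=M\big[\tau_w\psi(x,\cdot)\,g\big]=\int_{\Delta(A)}\widehat\psi(x,\sigma\delta_w)\widehat g(\sigma)\,d\beta(\sigma)$; by the translation-invariance of the mean value $M$ the map $w\mapsto c_x(w)$ again lies in $A$, so that $\Theta_\varepsilon(\zeta):=\int_\Omega\phi(x-\zeta)c_x(\zeta/\varepsilon)\,dx$ is admissible and an allowable test function for $u_\varepsilon$. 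Splitting $Q_\varepsilon=\Theta_\varepsilon+(Q_\varepsilon-\Theta_\varepsilon)$ and using the weak $\Sigma$-convergence of $u_\varepsilon$ on the first piece, I obtain the limit $\iint\widehat u_0(\zeta,\omega)\big[\int_\Omega\phi(x-\zeta)\int_{\Delta(A)}\widehat\psi(x,\sigma\omega)\widehat g(\sigma)\,d\beta(\sigma)\,dx\big]d\zeta\,d\beta(\omega)$. A change of variable $\sigma\mapsto\tau=\sigma\omega$ on the compact group $K(\Delta(A))$, legitimate because $\beta$ is its Haar measure (Theorem \ref{th4}) and because $\delta$ is a homomorphism with $\widehat{\tau_y u}=\tau_{\varphi(y)}\widehat u$ (Corollary \ref{co2}), turns this into $\iint_{\Omega\times\Delta(A)}\widehat\psi(x,\tau)\big[\int_{\mathbb{R}^N}\int_{K(\Delta(A))}\widehat u_0(\zeta,\omega)\phi(x-\zeta)\widehat g(\tau\omega^{-1})\,d\beta(\omega)\,d\zeta\big]dx\,d\beta(\tau)$, which is exactly $\iint\widehat{(u_0\ast\ast v_0)}\,\widehat\psi$ for $v_0=\phi\otimes g$.

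The step I expect to be the main obstacle is the vanishing of the remainder $\int_{\mathbb{R}^N}u_\varepsilon(\zeta)\,(Q_\varepsilon-\Theta_\varepsilon)(\zeta)\,d\zeta$. Here $(Q_\varepsilon-\Theta_\varepsilon)(\zeta)=\int_\Omega\phi(x-\zeta)\big[\psi(x,x/\varepsilon)g(x/\varepsilon-\zeta/\varepsilon)-c_x(\zeta/\varepsilon)\big]dx$ is the difference between an oscillation and its fast-variable mean, but the shift parameter $\zeta/\varepsilon$ co-oscillates with the integration variable, so the two fast scales are correlated and one cannot merely invoke pointwise mean-value convergence. The remainder must be controlled uniformly in $\zeta/\varepsilon$ by exploiting the ergodic averaging of the dynamical system $\mathcal{T}=\{\mathcal{T}(x)\}$ on $(\Delta(A),\beta)$, namely that $M$ is realised as $\int_{K(\Delta(A))}d\beta$ (Theorem \ref{th4}(ii)) together with the almost-everywhere sampling statements $\delta_y\in K(\Delta(A))$ a.e. (Theorem \ref{th5}) and Lemma \ref{le3}; these are what should yield $\|Q_\varepsilon-\Theta_\varepsilon\|_{L^{p'}(\mathbb{R}^N)}\to0$ and hence, paired with the bounded $(u_\varepsilon)$, allow the remainder to be discarded. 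Once this is established, letting $\eta\to0$ completes the proof.
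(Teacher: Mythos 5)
Your overall architecture coincides with the paper's: reduce $v_\varepsilon$ to an admissible trace $\psi_0^\varepsilon$ ($\psi_0\in\mathcal{K}(\mathbb{R}^N;A)$) using the strong $\Sigma$-convergence and Young's inequality; Fubini the convolution onto an averaged test function (your $\Theta_\varepsilon$, the paper's $\Psi^\varepsilon(t)=\Phi(t,\delta_{t/\varepsilon})$ with $\Phi=\mathcal{G}\circ\Psi$); apply the weak $\Sigma$-convergence of $u_\varepsilon$ to that admissible function; and identify the limit as the double convolution by the change of variables $\sigma\mapsto\sigma\omega$ on the compact group $K(\Delta(A))$ with Haar measure $\beta$. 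All of that matches.

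The genuine gap is exactly where you flag it: you never prove that the remainder $\int u_\varepsilon\,(Q_\varepsilon-\Theta_\varepsilon)$ vanishes; you only name Theorem \ref{th5} and Lemma \ref{le3} as tools that ``should yield'' $\|Q_\varepsilon-\Theta_\varepsilon\|_{L^{p'}(\mathbb{R}^N)}\to0$. This is the heart of the proof, and it is not a routine ergodic-averaging estimate, because (as you observe) the shift $\zeta/\varepsilon$ is correlated with the oscillation variable. The paper closes it in two steps. First, for each \emph{fixed} $\zeta$ it invokes its translation theorem (Theorem \ref{p2.4}(ii)), whose proof rests on $\delta_{\zeta/\varepsilon}\in K(\Delta(A))$ (Theorem \ref{th5}), the continuity of inversion in the compact group $K(\Delta(A))$, and an auxiliary net $\delta_{y_n}\to r$ independent of $\varepsilon$; this gives $Q_\varepsilon(\zeta)-\Theta_\varepsilon(\zeta)\to0$ pointwise a.e.\ in $\zeta$ (every subsequence of $\varepsilon$ having a further subsequence along which $\delta_{\zeta/\varepsilon}$ converges). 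Second, since the remainder is uniformly bounded with support in a fixed compact set, Egorov's theorem upgrades the pointwise convergence enough to kill $\int u_\varepsilon(Q_\varepsilon-\Theta_\varepsilon)$. Note also that your stated target is stronger than needed and problematic at the endpoint: for $p>1$ the pointwise convergence plus the uniform bound on a fixed compact set already give $\|Q_\varepsilon-\Theta_\varepsilon\|_{L^{p'}}\to0$ by dominated convergence, but for $p=1$ one has $p'=\infty$ and no norm convergence is available -- this is precisely why the paper argues via Egorov rather than via an $L^{p'}$ bound. A secondary loose end: your assertion that $w\mapsto c_x(w)$ lies in $A$ ``by translation-invariance of $M$'' is not a proof; the functional $u\mapsto M(ug)$ is not an element of $\Delta(A)$, so the paper's $M$-introversion does not apply to it directly, and the paper instead obtains membership in $A$ by defining $\Phi$ on all of $\mathbb{R}^N\times\Delta(A)$ and pulling back through the Gelfand isomorphism.
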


\begin{proof}
In view of (\ref{11}), the sequence $(u_{\varepsilon }\ast v_{\varepsilon
})_{\varepsilon >0}$ is bounded in $L^{m}(\Omega )$. Now, let $\eta >0$ and
let $\psi _{0}\in \mathcal{K}(\mathbb{R}^{N};A)$ (the space of continuous
functions from $\mathbb{R}^{N}$ into $A$ with compact support in $\mathbb{R}%
^{N}$) be such that $\left\Vert \widehat{v}_{0}-\widehat{\psi }%
_{0}\right\Vert _{L^{q}(\mathbb{R}^{N}\times \Delta (A))}\leq \frac{\eta }{2}
$. Since $v_{\varepsilon }\rightarrow v_{0}$ in $L^{q}(\mathbb{R}^{N})$%
-strong $\Sigma $ we have that $v_{\varepsilon }-\psi _{0}^{\varepsilon
}\rightarrow v_{0}-\psi _{0}$ in $L^{q}(\mathbb{R}^{N})$-strong $\Sigma $,
hence $\left\Vert v_{\varepsilon }-\psi _{0}^{\varepsilon }\right\Vert
_{L^{q}(\mathbb{R}^{N})}\rightarrow \left\Vert \widehat{v}_{0}-\widehat{\psi 
}_{0}\right\Vert _{L^{q}(\mathbb{R}^{N}\times \Delta (A))}=\left\Vert 
\widehat{v}_{0}-\widehat{\psi }_{0}\right\Vert _{L^{q}(\mathbb{R}^{N}\times
K(\Delta (A)))}$ as $\varepsilon \rightarrow 0$. So, there is $\alpha >0$
such that 
\begin{equation}
\left\Vert v_{\varepsilon }-\psi _{0}^{\varepsilon }\right\Vert _{L^{q}(%
\mathbb{R}^{N})}\leq \eta \text{ for }0<\varepsilon \leq \alpha \text{.}
\label{2.6'}
\end{equation}%
For $f\in \mathcal{K}(\Omega ;A)$, we have (by still denoting by $%
u_{\varepsilon }$ the zero extension of $u_{\varepsilon }$ off $\Omega $)%
\begin{eqnarray*}
\int_{\Omega }(u_{\varepsilon }\ast v_{\varepsilon })(x)f\left( x,\frac{x}{%
\varepsilon }\right) dx &=&\int_{\Omega }\left( \int_{\mathbb{R}%
^{N}}u_{\varepsilon }(t)v_{\varepsilon }(x-t)dt\right) f\left( x,\frac{x}{%
\varepsilon }\right) dx \\
&=&\int_{\mathbb{R}^{N}}u_{\varepsilon }(t)\left[ \int_{\mathbb{R}%
^{N}}v_{\varepsilon }(x-t)f\left( x,\frac{x}{\varepsilon }\right) dx\right]
dt \\
&=&\int_{\mathbb{R}^{N}}u_{\varepsilon }(t)\left[ \int_{\mathbb{R}%
^{N}}v_{\varepsilon }(x)f\left( x+t,\frac{x}{\varepsilon }+\frac{t}{%
\varepsilon }\right) dx\right] dt \\
&=&\int_{\mathbb{R}^{N}}u_{\varepsilon }(t)\left[ \int_{\mathbb{R}%
^{N}}(v_{\varepsilon }(x)-\psi _{0}^{\varepsilon }(x))f^{\varepsilon }(x+t)dx%
\right] dt \\
&&+\int_{\mathbb{R}^{N}}u_{\varepsilon }(t)\left( \int_{\mathbb{R}^{N}}\psi
_{0}^{\varepsilon }(x)f^{\varepsilon }(x+t)dx\right) dt \\
&=&(I)+(II).
\end{eqnarray*}%
On the one hand one has $(I)=\int_{\Omega }[u_{\varepsilon }\ast
(v_{\varepsilon }-\psi _{0}^{\varepsilon })](x)f^{\varepsilon }(x)dx$ and 
\begin{eqnarray*}
\left\vert (I)\right\vert  &\leq &\left\Vert u_{\varepsilon }\right\Vert
_{L^{p}(\Omega )}\left\Vert v_{\varepsilon }-\psi _{0}^{\varepsilon
}\right\Vert _{L^{q}(\mathbb{R}^{N})}\left\Vert f^{\varepsilon }\right\Vert
_{L^{m^{\prime }}(\Omega )} \\
&\leq &c\left\Vert v_{\varepsilon }-\psi _{0}^{\varepsilon }\right\Vert
_{L^{q}(\mathbb{R}^{N})}
\end{eqnarray*}%
where $c$ is a positive constant independent of $\varepsilon $. It follows
from (\ref{2.6'}) that 
\begin{equation}
\left\vert (I)\right\vert \leq c\eta \text{ for }0<\varepsilon \leq \alpha 
\text{.}  \label{2.7'}
\end{equation}%
On the other hand, in view of Theorem \ref{p2.4}, we have, as $\varepsilon
\rightarrow 0$, 
\begin{equation*}
\int_{\mathbb{R}^{N}}\psi _{0}^{\varepsilon }(x)f^{\varepsilon
}(x+t)dx\rightarrow \iint_{\mathbb{R}^{N}\times K(\Delta (A))}\widehat{\psi }%
_{0}(x,s)\widehat{f}(x+t,sr)dxd\beta (s)
\end{equation*}%
where $r=\lim \delta _{t/\varepsilon }$ (for a suitable subsequence of $%
\varepsilon \rightarrow 0$) in $K(\Delta (A))$-weak$\ast $. So let $\Phi :%
\mathbb{R}^{N}\times \Delta (A)\rightarrow \mathbb{R}$ be defined by 
\begin{equation*}
\Phi (t,r)=\iint_{\mathbb{R}^{N}\times K(\Delta (A))}\widehat{\psi }_{0}(x,s)%
\widehat{f}(x+t,sr)dxd\beta (s),\ (t,r)\in \mathbb{R}^{N}\times \Delta (A).
\end{equation*}%
Then we easily check that $\Phi \in \mathcal{K}(\mathbb{R}^{N};\mathcal{C}%
(\Delta (A)))$, so that there is a function $\Psi \in \mathcal{K}(\mathbb{R}%
^{N};A)$ with $\Phi =\mathcal{G}\circ \Psi $. We can therefore define the
trace $\Psi ^{\varepsilon }(t)=\Psi (t,t/\varepsilon )$ ($t\in \mathbb{R}^{N}
$) and we have 
\begin{eqnarray*}
\Psi ^{\varepsilon }(t) &=&\left\langle \delta _{\frac{t}{\varepsilon }%
},\Psi (t,\cdot )\right\rangle =\widehat{\Psi }\left( t,\delta _{\frac{t}{%
\varepsilon }}\right) =\Phi \left( t,\delta _{\frac{t}{\varepsilon }}\right) 
\\
&=&\iint_{\mathbb{R}^{N}\times K(\Delta (A))}\widehat{\psi }_{0}(x,s)%
\widehat{f}(x,s\delta _{\frac{t}{\varepsilon }})dxd\beta (s).
\end{eqnarray*}

Next, we have 
\begin{eqnarray*}
(II) &=&\int_{\mathbb{R}^{N}}u_{\varepsilon }(t)\left( \int_{\mathbb{R}%
^{N}}\psi _{0}^{\varepsilon }(x)f^{\varepsilon }(x+t)dx-\Psi ^{\varepsilon
}(t)\right) dt+\int_{\mathbb{R}^{N}}u_{\varepsilon }(t)\Psi ^{\varepsilon
}(t)dt \\
&=&(II_{1})+(II_{2}).
\end{eqnarray*}%
As far as $(II_{1})$ is concerned, set 
\begin{equation*}
V_{\varepsilon }(t)=\int_{\mathbb{R}^{N}}\psi _{0}^{\varepsilon
}(x)f^{\varepsilon }(x+t)dx-\Psi ^{\varepsilon }(t)\text{ for a.e. }t\in 
\mathbb{R}^{N}.
\end{equation*}%
Then the following claims hold.

\begin{claim}
\label{cl1}For a.e. $t$, $V_{\varepsilon }(t)\rightarrow 0$ as $\varepsilon
\rightarrow 0$ (possibly up to a subsequence)
\end{claim}

\begin{claim}
\label{cl2}$\int_{\mathbb{R}^{N}}u_{\varepsilon }(t)V_{\varepsilon
}(t)dt\rightarrow 0$ as $\varepsilon \rightarrow 0$.
\end{claim}

Indeed, for Claim \ref{cl1}, applying Theorem \ref{p2.4} leads one to 
\begin{equation*}
\int_{\mathbb{R}^{N}}\psi _{0}^{\varepsilon }(x)f^{\varepsilon
}(x+t)dx\rightarrow \iint_{\mathbb{R}^{N}\times K(\Delta (A))}\widehat{\psi }%
_{0}(x,s)\widehat{f}(x+t,sr)dxd\beta (s)\text{ as }\varepsilon \rightarrow 0
\end{equation*}%
where $r$ is such that $\delta _{t/\varepsilon }\rightarrow r$ in $K(\Delta
(A))$ weak$\ast $ for some subsequence of $\varepsilon $. Moreover, since $%
\Psi ^{\varepsilon }(t)=\Phi (t,\delta _{t/\varepsilon })$, we have by the
continuity of $\Phi (t,\cdot )$ that, for the same subsequence, 
\begin{equation*}
\Psi ^{\varepsilon }(t)\rightarrow \iint_{\mathbb{R}^{N}\times K(\Delta (A))}%
\widehat{\psi }_{0}(x,s)\widehat{f}(x+t,sr)dxd\beta (s).
\end{equation*}%
Whence Claim \ref{cl1} is justified. As for Claim \ref{cl2}, first and
foremost we have 
\begin{equation*}
\left\vert V_{\varepsilon }(t)\right\vert \leq c\text{ for a.e. }t\in \Omega 
\end{equation*}%
where $c$ is a positive constant independent of $t$ and $\varepsilon $.
Since $f$ and $\psi _{0}$ belong to $\mathcal{K}(\mathbb{R}^{N};A)$ we have
that $f^{\varepsilon }$ and $\psi _{0}^{\varepsilon }$ lie in $\mathcal{K}(%
\mathbb{R}^{N})$ and their support are contained in a fixed compact set of $%
\mathbb{R}^{N}$. Therefore $\psi _{0}^{\varepsilon }\ast f^{\varepsilon }\in 
\mathcal{K}(\mathbb{R}^{N})$. As a result, $V_{\varepsilon }\in \mathcal{K}(%
\mathbb{R}^{N})$ and further its support is contained in a fixed compact set 
$K\subset \mathbb{R}^{N}$ independent of $\varepsilon $.

This being so, let $\gamma >0$. From Egorov's theorem there exists $D\subset 
\mathbb{R}^{N}$ such that meas$(\mathbb{R}^{N}\backslash D)<\gamma $ and $%
V_{\varepsilon }$ converges uniformly to $0$ on $D$. We have the following
series of inequalities 
\begin{eqnarray*}
\left\vert \int_{\mathbb{R}^{N}}u_{\varepsilon }(t)V_{\varepsilon
}(t)dt\right\vert &\leq &\left\Vert u_{\varepsilon }\right\Vert
_{L^{p}(D)}\left\Vert V_{\varepsilon }\right\Vert _{L^{p^{\prime
}}(D)}+\left\Vert u_{\varepsilon }\right\Vert _{L^{p}(\mathbb{R}%
^{N}\backslash D)}\left\Vert V_{\varepsilon }\right\Vert _{L^{p^{\prime }}(%
\mathbb{R}^{N}\backslash D)} \\
&\leq &C\left\Vert V_{\varepsilon }\right\Vert _{L^{p^{\prime }}(D\cap K)}+C%
\text{meas}(\mathbb{R}^{N}\backslash D) \\
&\leq &C_{1}\text{meas}(K)\sup_{t\in D}\left\vert V_{\varepsilon
}(t)\right\vert +C_{1}\gamma
\end{eqnarray*}%
where $C_{1}$ is a positive constant independent of both $\varepsilon $ and $%
D$. It emerges from the uniform continuity of $V_{\varepsilon }$ in $D$ that
there exists $\varepsilon _{0}>0$ with $\alpha _{1}\leq \alpha $ such that 
\begin{equation*}
\left\vert \int_{\mathbb{R}^{N}}u_{\varepsilon }(t)V_{\varepsilon
}(t)dt\right\vert \leq C_{2}\gamma \text{ provided }0<\varepsilon \leq
\alpha _{1},
\end{equation*}%
where $C_{2}>0$ is independent of $\varepsilon $. This shows Claim \ref{cl2}%
. Thus $(II_{1})\rightarrow 0$ as $\varepsilon \rightarrow 0$.

As for $(II_{2})$, using once again the weak $\Sigma $-convergence of $%
(u_{\varepsilon })_{\varepsilon }$, we get 
\begin{equation*}
\int_{\Omega }u_{\varepsilon }(t)\Psi ^{\varepsilon }(t)dt\rightarrow
\iint_{\Omega \times K\Delta (A))}\widehat{u}_{0}(t,r)\widehat{\Psi }%
(t,r)dtd\beta ,
\end{equation*}%
and 
\begin{eqnarray*}
&&\iint_{\Omega \times K(\Delta (A))}\widehat{u}_{0}(t,r)\widehat{\Psi }%
(t,r)dtd\beta  \\
&=&\iint_{\Omega \times K(\Delta (A))}\widehat{u}_{0}(t,r)\Phi (t,r)dtd\beta 
\\
&=&\iint_{\Omega \times K(\Delta (A))}\widehat{u}_{0}(t,r)\left[ \iint_{%
\mathbb{R}^{N}\times K(\Delta (A))}\widehat{\psi }_{0}(x,s)\widehat{f}%
(x+t,sr)dxd\beta (s)\right] dtd\beta (r) \\
&=&\iint_{\Omega \times K(\Delta (A))}\left[ \iint_{\mathbb{R}^{N}\times
K(\Delta (A))}\widehat{u}_{0}(t,r)\widehat{\psi }_{0}(x-t,sr^{-1})dtd\beta
(r)\right] \widehat{f}(x,s)dxd\beta (s) \\
&=&\iint_{\Omega \times K(\Delta (A))}(\widehat{u}_{0}\widehat{\ast \ast }%
\widehat{\psi }_{0})(x,s)\widehat{f}(x,s)dxd\beta (s).
\end{eqnarray*}%
Thus, there is $0<\alpha _{2}\leq \alpha _{1}$ such that 
\begin{equation}
\left\vert \int_{\Omega }(u_{\varepsilon }\ast \psi _{0}^{\varepsilon
})f^{\varepsilon }dx-\iint_{\Omega \times K(\Delta (A))}(\widehat{u}_{0}%
\widehat{\ast \ast }\widehat{\psi }_{0})\widehat{f}dxd\beta \right\vert \leq 
\frac{\eta }{2}\text{ for }0<\varepsilon \leq \alpha _{2}\text{.}
\label{2.8}
\end{equation}%
Now, let $0<\varepsilon \leq \alpha _{2}$ be fixed. Finally the
decomposition 
\begin{eqnarray*}
&&\int_{\Omega }(u_{\varepsilon }\ast v_{\varepsilon })f^{\varepsilon
}dx-\iint_{\Omega \times K(\Delta (A))}(\widehat{u}_{0}\widehat{\ast \ast }%
\widehat{v}_{0})\widehat{f}dxd\beta  \\
&=&\int_{\Omega }\left[ u_{\varepsilon }\ast (v_{\varepsilon }-\psi
_{0}^{\varepsilon })\right] f^{\varepsilon }dx+\iint_{\Omega \times K(\Delta
(A))}\left[ \widehat{u}_{0}\widehat{\ast \ast }(\widehat{\psi }_{0}-\widehat{%
v}_{0})\right] \widehat{f}dxd\beta  \\
&&+\int_{\Omega }(u_{\varepsilon }\ast \psi _{0}^{\varepsilon
})f^{\varepsilon }dx-\iint_{\Omega \times K(\Delta (A))}(\widehat{u}_{0}%
\widehat{\ast \ast }\widehat{\psi }_{0})\widehat{f}dxd\beta ,
\end{eqnarray*}%
associated to (\ref{2.6'})-(\ref{2.8}) allow one to see that 
\begin{equation*}
\left\vert \int_{\Omega }(u_{\varepsilon }\ast v_{\varepsilon
})f^{\varepsilon }dx-\iint_{\Omega \times K(\Delta (A))}(\widehat{u}_{0}%
\widehat{\ast \ast }\widehat{v}_{0})\widehat{f}dxd\beta \right\vert \leq
C\eta \text{ for }0<\varepsilon \leq \alpha _{2}.
\end{equation*}%
Here $C$ is a positive constant independent of $\varepsilon $. This
concludes the proof.
\end{proof}

\begin{remark}
\label{r2.2}\emph{Theorem \ref{t2.4} generalizes its homologue Theorem 2 in 
\cite{SW} which is concerned with the special case when }$q=1$\emph{\ and }$%
\Omega $\emph{\ bounded. Though the above proof is very similar to the one
in \cite{SW}, it is different from the latter in the sense that it takes
full advantage of Egorov's theorem.}
\end{remark}

In practise, we deal in this work with the evolutionary version of the
concept of $\Sigma $-convergence. This requires some further notion such as
the one related to the product of algebras with mean value. Let $A_{1}$
(resp. $A_{2}$) be an algebra with mean value on $\mathbb{R}^{m_{1}}$ (resp. 
$\mathbb{R}^{m_{2}}$). We define their product denoted by $A_{1}\odot A_{2}$
as the closure in BUC$(\mathbb{R}^{m_{1}}\times \mathbb{R}^{m_{2}})$ of the
tensor product $A_{1}\otimes A_{2}=\{\sum_{\text{finite}}u_{i_{1}}\otimes
u_{i_{2}}:u_{i_{j}}\in A_{j},\;j=1,2\}$. It is a well known fact that $%
A_{1}\odot A_{2}$ is an algebra with mean value on $\mathbb{R}^{m_{1}}\times 
\mathbb{R}^{m_{2}}$; see e.g. \cite{Hom1, CMP}.

With this in mind, let $A=A_{y}\odot A_{\tau }$ where $A_{y}$ (resp. $%
A_{\tau }$) is an algebra with mean value on $\mathbb{R}_{y}^{N}$ (resp. $%
\mathbb{R}_{\tau }$). The same letter $\mathcal{G}$ will denote the Gelfand
transformation on $A_{y}$, $A_{\tau }$ and $A$, as well. Points in $\Delta
(A_{y})$ (resp. $\Delta (A_{\tau })$) are denoted by $s$ (resp. $s_{0}$).
The compact space $\Delta (A_{y})$ (resp. $\Delta (A_{\tau })$) is equipped
with the $M$-measure $\beta _{y}$ (resp. $\beta _{\tau }$), for $A_{y}$
(resp. $A_{\tau }$). We have $\Delta (A)=\Delta (A_{y})\times \Delta
(A_{\tau })$ (Cartesian product) and the $M$-measure for $A$, with which $%
\Delta (A)$ is equipped, is precisely the product measure $\beta =\beta
_{y}\otimes \beta _{\tau }$ (see \cite{Hom1}). Finally, let $0<T<\infty $.
We set $Q_{T}=\Omega \times \left( 0,T\right) $, an open cylinder in $%
\mathbb{R}^{N+1}$.

This being so, a sequence $\left( u_{\varepsilon }\right) _{\varepsilon
>0}\subset L^{p}\left( Q_{T}\right) $\ $(1\leq p<\infty )$\ is said to
weakly $\Sigma $-converge\ in $L^{p}\left( Q_{T}\right) $\ to some $u_{0}\in
L^{p}(Q_{T};\mathcal{B}_{A}^{p})$\ if as $\varepsilon \rightarrow 0$, 
\begin{equation*}
\int_{Q_{T}}u_{\varepsilon }\left( x,t\right) f\left( x,t,\frac{x}{%
\varepsilon },\frac{t}{\varepsilon }\right) dxdt\rightarrow
\iint_{Q_{T}\times K(\Delta (A))}\widehat{u}_{0}\left( x,t,s,s_{0}\right) 
\widehat{f}\left( x,t,s,s_{0}\right) dxdtd\beta
\end{equation*}%
for all $f\in L^{p^{\prime }}\left( Q_{T};A\right) $.

\begin{remark}
\label{r2.4}\emph{The conclusions of Theorems \ref{t2.2} and \ref{t2.3} are
still valid mutatis mutandis in the present context (change there }$\Omega $%
\emph{\ into }$Q_{T}$\emph{, }$W^{1,p}(\Omega )$\emph{\ into }$%
L^{p}(0,T;W^{1,p}(\Omega ))$\emph{, }$W^{1,p}(\Omega ;I_{A}^{p})\times
L^{p}(\Omega ;\mathcal{B}_{A}^{1,p})$\emph{\ into }$L^{p}(0,T;W^{1,p}(\Omega
;I_{A}^{p}))\times L^{p}(Q_{T};\mathcal{B}_{A_{\tau }}^{p}(\mathbb{R}_{\tau
};\mathcal{B}_{A_{y}}^{1,p}))$\emph{).}
\end{remark}

Now, assume that $A_{y}$ and $A_{\tau }$ are introverted. Let $(a,\tau )\in 
\mathbb{R}^{N}\times \mathbb{R}$, and let $\left( u_{\varepsilon }\right)
_{\varepsilon >0}\subset L^{p}\left( Q_{T}\right) $\ $(1\leq p<\infty )$\ be
a weakly $\Sigma $-convergent subsequence in $L^{p}\left( Q_{T}\right) $ to $%
u_{0}\in L^{p}(Q_{T};\mathcal{B}_{A}^{p})$. Set 
\begin{equation*}
v_{\varepsilon }(x,t)=u_{\varepsilon }(x+a,t+\tau )\text{ for }(x,t)\in
Q_{T}-(a,\tau )\equiv (\Omega -a)\times (-\tau ,T-\tau ).
\end{equation*}%
Then $v_{\varepsilon }\rightarrow v_{0}$ in $L^{p}(Q_{T}-(a,\tau ))$-weak $%
\Sigma $ where $v_{0}\in L^{p}(Q_{T}-(a,\tau );\mathcal{B}_{A}^{p})$ is
defined by 
\begin{equation*}
\widehat{v}_{0}(x,t,s,s_{0})=\widehat{u}_{0}(x+a,t+\tau ,sr,s_{0}r_{0})\text{%
, \ }(x,t,s,s_{0})\in \lbrack Q_{T}-(a,\tau )]\times \Delta (A),
\end{equation*}%
the micro-translations $r$ and $r_{0}$ being determined as follows: $\delta
_{a/\varepsilon }\rightarrow r$ in $K(\Delta (A_{y}))$ and $\delta _{\tau
/\varepsilon }\rightarrow r_{0}$ in $K(\Delta (A_{\tau }))$ up to a
subsequence of $\varepsilon $. A similar conclusion holds in Theorem \ref%
{t2.4} mutatis mutandis.

\section{Homogenization of a parameterized Wilson-Cowan type equation with
finite and infinite delays}

We consider the parameterized Wilson-Cowan model with delay \cite{WC72, WC73}
\begin{equation}
\left\{ 
\begin{array}{l}
\frac{\partial u_{\varepsilon }}{\partial t}(x,t)=-u_{\varepsilon
}(x+a,t)+\int_{\mathbb{R}^{N}}K^{\varepsilon }(x-\xi )f\left( \frac{\xi }{%
\varepsilon },u_{\varepsilon }(\xi ,t)\right) d\xi \ \text{in }\mathbb{R}%
_{T}^{N}=\mathbb{R}^{N}\times \left( 0,T\right) \\ 
u_{\varepsilon }(x,0)=u^{0}(x),\ x\in \mathbb{R}^{N}%
\end{array}%
\right.  \label{3.1'}
\end{equation}%
where $a\in \mathbb{R}^{N}$ is fixed, $u_{\varepsilon }$ denotes the
electrical activity level field, $f$ the firing rate function and $%
K^{\varepsilon }=K^{\varepsilon }(x)=K(x,x/\varepsilon )$ the connectivity
kernel. We assume that $K\in \mathcal{K}(\mathbb{R}^{N};A)$ ($A$ an
introverted algebra with mean value on $\mathbb{R}^{N}$) is nonnegative and
is such that $\int_{\mathbb{R}^{N}}K^{\varepsilon }(x)dx\leq 1$, $f:\mathbb{R%
}^{N}\times \mathbb{R}\rightarrow \mathbb{R}$ is a nonnegative Carath\'{e}%
odory function satisfying the following conditions:

\begin{itemize}
\item[(H1)] For almost all $y\in \mathbb{R}^{N}$, the function $f(y,\cdot
):\lambda \mapsto f(y,\lambda )$ is continuous; for all $\lambda \in \mathbb{%
R}$, the function $f(\cdot ,\lambda ):y\mapsto f(y,\lambda )$ is measurable
and $f(\cdot ,0)$ lies in $L^{1}(\mathbb{R}^{N})\cap L^{2}(\mathbb{R}^{N})$;
there exists a positive constant $k_{1}$ such that 
\begin{equation*}
\left\vert f(y,\mu _{1})-f(y,\mu _{2})\right\vert \leq k_{1}\left\vert \mu
_{1}-\mu _{2}\right\vert \text{ for all }y\in \mathbb{R}^{N}\text{ and all }%
\mu _{1},\mu _{2}\in \mathbb{R}.
\end{equation*}

\item[(H2)] $f(\cdot ,\mu )\in A$ for all $\mu \in \mathbb{R}$.

\item[(H3)] For any sequence $(v_{\varepsilon })_{\varepsilon >0}\subset
L^{1}(\mathbb{R}_{T}^{N})$ such that $v_{\varepsilon }\rightarrow v_{0}$ in $%
L^{1}(\mathbb{R}_{T}^{N})$-weak $\Sigma $, we have $f^{\varepsilon }(\cdot
,v_{\varepsilon })\rightarrow f(\cdot ,v_{0})$ in $L^{1}(\mathbb{R}_{T}^{N})$%
-weak $\Sigma $.
\end{itemize}

Assumption (H1) is used to derive the existence and uniqueness result while
assumptions (H2) and (H3) are the cornerstone in the homogenization process.
It is worth noticing that assumption (H3) is meaningful. It concerns the
convergence of fluxes to flux. Indeed the convergence result $v_{\varepsilon
}\rightarrow v_{0}$ in $L^{1}(\mathbb{R}_{T}^{N})$-weak $\Sigma $ does not
in general ensure the convergence result $f^{\varepsilon }(\cdot
,v_{\varepsilon })\rightarrow f(\cdot ,v_{0})$ in $L^{1}(\mathbb{R}_{T}^{N})$%
-weak $\Sigma $. However, under some circumstances, this becomes true; see
e.g. \cite[Section 4]{SW} for some concrete examples.

In \cite{SW}, Eq. (\ref{3.1'}) with $a=0$ has been considered. Here we aim
at showing how the translation induces a memory effect at the microscopic
level. Before we can do that, however, we need to provide an existence
result. But by repeating the proof of Theorem 3 in \cite{SW}, we get the
following result.

\begin{theorem}
\label{t4.1'}Assume that $u^{0}\in L^{1}(\mathbb{R}^{N})\cap L^{2}(\mathbb{R}%
^{N})$. For each $\varepsilon >0$, there exists a unique solution $%
u_{\varepsilon }\in \mathcal{C}([0,\infty );L^{1}(\mathbb{R}^{N})\cap L^{2}(%
\mathbb{R}^{N}))$ to \emph{(\ref{3.1'})} satisfying 
\begin{equation}
\sup_{\varepsilon >0}\sup_{0\leq t\leq T}\left[ \left\Vert u_{\varepsilon
}(\cdot ,t)\right\Vert _{L^{1}(\mathbb{R}^{N})}+\left\Vert u_{\varepsilon
}(\cdot ,t)\right\Vert _{L^{2}(\mathbb{R}^{N})}\right] \leq C  \label{3.2'}
\end{equation}%
where $C$ is a positive constant depending only on $u^{0}$ and on $T$.
Moreover the sequence $(u_{\varepsilon })_{0<\varepsilon \leq 1}$\ is
uniformly integrable in $L^{1}(\mathbb{R}_{T}^{N})$.
\end{theorem}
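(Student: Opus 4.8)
The plan is to treat \eqref{3.1'}, for each fixed $\varepsilon>0$, as a semilinear ordinary differential equation in the Banach space $X=L^{1}(\mathbb{R}^{N})\cap L^{2}(\mathbb{R}^{N})$ with norm $\left\Vert v\right\Vert _{X}=\left\Vert v\right\Vert _{L^{1}(\mathbb{R}^{N})}+\left\Vert v\right\Vert _{L^{2}(\mathbb{R}^{N})}$. Writing $\tau _{a}v=v(\cdot +a)$ and $N_{\varepsilon }(v)(x)=\int_{\mathbb{R}^{N}}K^{\varepsilon }(x-\xi )f(\xi /\varepsilon ,v(\xi ))\,d\xi =\bigl(K^{\varepsilon }\ast f(\cdot /\varepsilon ,v)\bigr)(x)$, the problem becomes $u_{\varepsilon }^{\prime }(t)=-\tau _{a}u_{\varepsilon }(t)+N_{\varepsilon }(u_{\varepsilon }(t))$ with $u_{\varepsilon }(0)=u^{0}\in X$. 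The first step is to observe that $\tau _{a}$ is a linear isometry of $L^{p}(\mathbb{R}^{N})$ for $p=1,2$, hence a bounded operator on $X$. The second step is to check that $N_{\varepsilon }$ maps $X$ into $X$ and is globally Lipschitz: by (H1) the Nemytskii map $v\mapsto f(\cdot /\varepsilon ,v)$ satisfies $\left\Vert f(\cdot /\varepsilon ,v)-f(\cdot /\varepsilon ,w)\right\Vert _{L^{p}}\leq k_{1}\left\Vert v-w\right\Vert _{L^{p}}$, and since $K^{\varepsilon }\geq 0$ with $\left\Vert K^{\varepsilon }\right\Vert _{L^{1}}\leq 1$, Young's inequality yields $\left\Vert N_{\varepsilon }(v)-N_{\varepsilon }(w)\right\Vert _{L^{p}}\leq k_{1}\left\Vert v-w\right\Vert _{L^{p}}$ for $p=1,2$. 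Moreover $N_{\varepsilon }(0)=K^{\varepsilon }\ast f(\cdot /\varepsilon ,0)\in X$, because the scaling identity $\left\Vert f(\cdot /\varepsilon ,0)\right\Vert _{L^{p}}^{p}=\varepsilon ^{N}\left\Vert f(\cdot ,0)\right\Vert _{L^{p}}^{p}$ is finite by (H1). Thus $v\mapsto -\tau _{a}v+N_{\varepsilon }(v)$ is globally Lipschitz from $X$ into $X$, with Lipschitz constant bounded by $1+k_{1}$ \emph{independently of} $\varepsilon $.

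Existence and uniqueness then follow from the Banach fixed point theorem applied to the Duhamel formulation $u_{\varepsilon }(t)=u^{0}+\int_{0}^{t}\bigl(-\tau _{a}u_{\varepsilon }(s)+N_{\varepsilon }(u_{\varepsilon }(s))\bigr)\,ds$ on $\mathcal{C}([0,T_{0}];X)$ for $T_{0}$ sufficiently small. Because the right-hand side is globally Lipschitz on all of $X$, the resulting local solution extends by the usual continuation argument to a unique global solution $u_{\varepsilon }\in \mathcal{C}([0,\infty );L^{1}(\mathbb{R}^{N})\cap L^{2}(\mathbb{R}^{N}))$, with no possibility of blow-up. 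Only (H1) is used here; (H2) and (H3) play no role in this step.

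For the uniform bound \eqref{3.2'}, I would estimate in $L^{p}$ ($p=1,2$) directly from the integral equation. Using $\left\Vert \tau _{a}v\right\Vert _{L^{p}}=\left\Vert v\right\Vert _{L^{p}}$, the Lipschitz bound on $N_{\varepsilon }$, and $\left\Vert N_{\varepsilon }(0)\right\Vert _{L^{p}}\leq \left\Vert K^{\varepsilon }\right\Vert _{L^{1}}\varepsilon ^{N/p}\left\Vert f(\cdot ,0)\right\Vert _{L^{p}}\leq \left\Vert f(\cdot ,0)\right\Vert _{L^{p}}$ for $0<\varepsilon \leq 1$, one obtains $\left\Vert u_{\varepsilon }(t)\right\Vert _{L^{p}}\leq \left\Vert u^{0}\right\Vert _{L^{p}}+c_{p}T+(1+k_{1})\int_{0}^{t}\left\Vert u_{\varepsilon }(s)\right\Vert _{L^{p}}\,ds$, where $c_{p}=\left\Vert f(\cdot ,0)\right\Vert _{L^{p}}$. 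Gronwall's inequality then gives $\sup_{0\leq t\leq T}\left\Vert u_{\varepsilon }(t)\right\Vert _{L^{p}}\leq (\left\Vert u^{0}\right\Vert _{L^{p}}+c_{p}T)e^{(1+k_{1})T}$, a bound depending only on $u^{0}$, $T$ and $k_{1}$; summing the cases $p=1$ and $p=2$ yields \eqref{3.2'}.

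Finally, uniform integrability in $L^{1}(\mathbb{R}_{T}^{N})$ follows from the uniform $L^{2}$ control: by \eqref{3.2'} one has $\left\Vert u_{\varepsilon }\right\Vert _{L^{2}(\mathbb{R}_{T}^{N})}^{2}=\int_{0}^{T}\left\Vert u_{\varepsilon }(\cdot ,t)\right\Vert _{L^{2}(\mathbb{R}^{N})}^{2}\,dt\leq TC^{2}$ uniformly in $\varepsilon $, so that for every $M>0$, $\int_{\{|u_{\varepsilon }|>M\}}|u_{\varepsilon }|\,dxdt\leq M^{-1}\left\Vert u_{\varepsilon }\right\Vert _{L^{2}(\mathbb{R}_{T}^{N})}^{2}\leq M^{-1}TC^{2}\rightarrow 0$ as $M\rightarrow \infty $, uniformly in $\varepsilon $; this is precisely the de la Vall\'{e}e Poussin criterion for uniform integrability. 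The routine parts are the Lipschitz estimates and the fixed point argument; the step that deserves the most care is keeping every constant independent of $\varepsilon $, which rests on the scaling identity $\left\Vert f(\cdot /\varepsilon ,0)\right\Vert _{L^{p}}=\varepsilon ^{N/p}\left\Vert f(\cdot ,0)\right\Vert _{L^{p}}$ together with the normalization $\left\Vert K^{\varepsilon }\right\Vert _{L^{1}}\leq 1$, and on passing correctly from the $L^{2}$ bound to uniform integrability over the unbounded domain $\mathbb{R}_{T}^{N}$.
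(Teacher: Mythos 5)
Your argument follows what is essentially the intended route: the paper gives no proof of this theorem but states that it is obtained by repeating the proof of Theorem 3 in \cite{SW}, i.e., a Picard--Banach fixed point argument for the globally Lipschitz vector field $v\mapsto -\tau _{a}v+K^{\varepsilon }\ast f(\cdot /\varepsilon ,v)$ in $L^{1}(\mathbb{R}^{N})\cap L^{2}(\mathbb{R}^{N})$ followed by a Gronwall estimate. Your treatment of existence, uniqueness and the bound (\ref{3.2'}) is correct, and you rightly isolate the two facts that keep all constants independent of $\varepsilon $: the normalization $\left\Vert K^{\varepsilon }\right\Vert _{L^{1}}\leq 1$ and the scaling $\left\Vert f(\cdot /\varepsilon ,0)\right\Vert _{L^{p}}=\varepsilon ^{N/p}\left\Vert f(\cdot ,0)\right\Vert _{L^{p}}$, which also explains why the uniformity is claimed only for $0<\varepsilon \leq 1$.

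The one step that needs more care is the last one. On the infinite-measure domain $\mathbb{R}_{T}^{N}$ the Chebyshev estimate $\int_{\{|u_{\varepsilon }|>M\}}|u_{\varepsilon }|\,dxdt\leq M^{-1}\left\Vert u_{\varepsilon }\right\Vert _{L^{2}}^{2}$ gives only the de la Vall\'{e}e Poussin half of uniform integrability (no concentration); it does not prevent mass from escaping to spatial infinity, and tightness is exactly what is needed for the way this theorem is used afterwards, namely to invoke part (ii) of Theorem \ref{t2.2} and extract a weakly $\Sigma $-convergent subsequence in $L^{1}(\mathbb{R}_{T}^{N})$ (the test functions there include $\psi \equiv 1$). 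A sequence such as $u_{n}=\mathbf{1}_{[n,n+1]}$ on $\mathbb{R}$ is bounded in $L^{1}\cap L^{2}$, hence satisfies your criterion, yet has no weak limit point in $L^{1}(\mathbb{R})$. The missing tightness does hold here and is provable: setting $m_{R}(t)=\int_{|x|>R}|u_{\varepsilon }(x,t)|\,dx$ and using the Duhamel formula together with the facts that $\tau _{a}$ shifts supports by the fixed vector $a$, that $K^{\varepsilon }$ is supported in a fixed ball (since $K\in \mathcal{K}(\mathbb{R}^{N};A)$), that $|f(y,\mu )|\leq |f(y,0)|+k_{1}|\mu |$, and that the tails of $u^{0}$ and of $f(\cdot /\varepsilon ,0)$ (for $\varepsilon \leq 1$) are uniformly small, a Gronwall argument shows $\sup_{0<\varepsilon \leq 1}\sup_{0\leq t\leq T}m_{R}(t)\rightarrow 0$ as $R\rightarrow \infty $. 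You should add this estimate to make the final assertion strong enough for its intended use.
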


In this section, we use the following version of $\Sigma $-convergence. A
sequence $(u_{\varepsilon })_{\varepsilon >0}\subset L^{p}(\mathbb{R}%
_{T}^{N})$ is weakly $\Sigma $-convergent in $L^{p}(\mathbb{R}_{T}^{N})$\ if 
\begin{equation*}
\int_{\mathbb{R}_{T}^{N}}u_{\varepsilon }\left( x,t\right) f\left( x,t,\frac{%
x}{\varepsilon }\right) dxdt\rightarrow \iint_{\mathbb{R}_{T}^{N}\times
K(\Delta (A))}\widehat{u}_{0}\left( x,t,s\right) \widehat{f}\left(
x,t,s\right) dxdtd\beta \left( s\right)
\end{equation*}%
for all $f\in L^{p^{\prime }}\left( \mathbb{R}_{T}^{N};A\right) $\ where $%
\widehat{f}\left( x,t,\cdot \right) =\mathcal{G}(f\left( x,t,\cdot \right) )$%
\ a.e. in $(x,t)\in \mathbb{R}_{T}^{N}$. With this in mind, we can now state
and prove the homogenization result.

\begin{theorem}
\label{t4.2'}Let $(u_{\varepsilon })_{\varepsilon >0}$ be the sequence of
solutions to \emph{(\ref{3.1'})}. Then there exists a subsequence of $%
\varepsilon $ still denoted by $\varepsilon $ such that, when $\varepsilon
\rightarrow 0$, it holds that 
\begin{equation}
\delta _{\frac{a}{\varepsilon }}\rightarrow r\text{ in }K(\Delta (A))\text{ }%
weak\ast \ \ \ \ \ \ \ \ \   \label{4.0}
\end{equation}%
and 
\begin{equation}
u_{\varepsilon }\rightarrow \mathcal{G}_{1}^{-1}\circ u_{0}\text{ in }L^{1}(%
\mathbb{R}_{T}^{N})\text{-weak }\Sigma  \label{4.1'}
\end{equation}%
where $u_{0}\in \mathcal{C}([0,T];L^{1}(\mathbb{R}^{N}\times \Delta (A)))$
is the unique solution to the following equation 
\begin{equation}
\left\{ 
\begin{array}{l}
\frac{\partial u_{0}}{\partial t}(x,t,s)=-u_{0}(x+a,t,sr)+(\widehat{K}\ast
\ast \widehat{f}(\cdot ,u_{0}))(x,t,s),\ (x,t,s)\in \mathbb{R}_{T}^{N}\times
\Delta (A) \\ 
u_{0}(x,0,s)=u^{0}(x),\ (x,s)\in \mathbb{R}^{N}\times \Delta (A).%
\end{array}%
\right.  \label{4.2'}
\end{equation}
\end{theorem}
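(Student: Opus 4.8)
The plan is to homogenize equation \eqref{3.1'} by applying the abstract $\Sigma$-convergence machinery of Sections 5 and 6 to each term. First I would use Theorem \ref{t4.1'} to extract a subsequence along which $u_{\varepsilon}$ is weakly $\Sigma$-convergent in $L^{1}(\mathbb{R}_{T}^{N})$ (the uniform integrability furnished by Theorem \ref{t4.1'} legitimizes this via part (ii) of Theorem \ref{t2.2}), say $u_{\varepsilon}\rightarrow v$ with $v\in L^{1}(\mathbb{R}_{T}^{N};\mathcal{B}_{A}^{1})$, and simultaneously refine the subsequence using Theorem \ref{th5} so that $\delta_{a/\varepsilon}\rightarrow r$ in $K(\Delta(A))$-weak$\ast$, giving \eqref{4.0}. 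I then set $u_{0}=\mathcal{G}_{1}\circ v$ and aim to identify the limit equation term by term. The natural device is to test \eqref{3.1'} against a function $\varphi(x,t)\psi(x/\varepsilon)$ with $\varphi\in\mathcal{C}_{0}^{\infty}(\mathbb{R}_{T}^{N})$ and $\psi\in A$, integrate, and pass to the limit.

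The three terms require three different tools from the preceding sections. For the time-derivative term $\partial u_{\varepsilon}/\partial t$, integration by parts moves the derivative onto $\varphi$, and the weak $\Sigma$-convergence of $u_{\varepsilon}$ gives the limit $\partial u_{0}/\partial t$ directly. For the translated reaction term $-u_{\varepsilon}(x+a,t)$, this is exactly the macro-translation situation analyzed in Theorem \ref{p2.4} (and its evolutionary version stated at the end of Section 6): the macro-shift by $a$ forces both a macro-translation $x\mapsto x+a$ and a micro-translation $s\mapsto sr$ on the limit, producing the term $-u_{0}(x+a,t,sr)$ in \eqref{4.2'}. The convolution term $\int_{\mathbb{R}^{N}}K^{\varepsilon}(x-\xi)f(\xi/\varepsilon,u_{\varepsilon}(\xi,t))\,d\xi$ is the delicate one. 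Here I would first invoke hypothesis (H3) to replace $f(\cdot/\varepsilon,u_{\varepsilon})$ by a sequence that weakly $\Sigma$-converges to $f(\cdot,u_{0})$, where (H2) ensures $f(\cdot,\mu)\in A$ so the Gelfand transform is available. Since $K\in\mathcal{K}(\mathbb{R}^{N};A)$ is a fixed profile, $K^{\varepsilon}$ strongly $\Sigma$-converges to $\widehat{K}$; thus Theorem \ref{t2.4} applies with $v_{\varepsilon}=K^{\varepsilon}$ strongly $\Sigma$-convergent and $u_{\varepsilon}$-type factor weakly $\Sigma$-convergent, yielding the double convolution $(\widehat{K}\,\widehat{\ast\ast}\,\widehat{f}(\cdot,u_{0}))$ in the limit.

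The main obstacle I anticipate is the interplay between the nonlinearity and the convolution: Theorem \ref{t2.4} is stated for a product of two given sequences, but here the second factor $f(\xi/\varepsilon,u_{\varepsilon}(\xi,t))$ itself depends on the unknown $u_{\varepsilon}$, so one must first secure the weak $\Sigma$-limit of this composite via (H3) before the convolution theorem can be applied. Combining the weak convergence of $u_{\varepsilon}$ with the strong convergence of $K^{\varepsilon}$ is exactly the hypothesis configuration of Theorem \ref{t2.4}, which is why strong $\Sigma$-convergence of the kernel is essential and cannot be weakened. Assembling the three limiting terms, the limit $u_{0}$ satisfies \eqref{4.2'} in the weak sense, and after checking that all terms lie in the appropriate spaces, a density argument over test functions $\varphi\psi$ upgrades this to the stated equation.

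Finally, to obtain \eqref{4.1'} with $u_{\varepsilon}\rightarrow\mathcal{G}_{1}^{-1}\circ u_{0}$ rather than merely along a subsequence, I would establish uniqueness for the limit problem \eqref{4.2'}. Writing \eqref{4.2'} as an abstract ODE in $L^{1}(\mathbb{R}^{N}\times\Delta(A))$ with a Lipschitz right-hand side — the Lipschitz constant coming from hypothesis (H1) on $f$ together with the boundedness of the convolution operator by $\widehat{K}$ (Young's inequality on $K(\Delta(A))$) and the boundedness of the shift $s\mapsto sr$ — a standard Gronwall/contraction argument gives a unique solution $u_{0}\in\mathcal{C}([0,T];L^{1}(\mathbb{R}^{N}\times\Delta(A)))$. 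Since every weakly $\Sigma$-convergent subsequence of $(u_{\varepsilon})$ produces a limit solving the same uniquely-solvable problem, the whole sequence converges, which completes the proof.
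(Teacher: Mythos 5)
Your proposal follows essentially the same route as the paper: extraction of a weakly $\Sigma$-convergent subsequence via the uniform integrability from Theorem \ref{t4.1'} and part (ii) of Theorem \ref{t2.2}, extraction of $r$ via Theorem \ref{th5}, identification of the translated term via part (ii) of Theorem \ref{p2.4}, and identification of the nonlocal term by combining hypothesis (H3) with the strong $\Sigma$-convergence of $K^{\varepsilon}$ and Theorem \ref{t2.4}. The only cosmetic difference is that the paper passes to the limit in the equivalent integral (Duhamel) form of \eqref{3.1'} rather than in the weak formulation against $\varphi\psi^{\varepsilon}$; both yield the same limit identification, and the integral form gives the continuity in time of $u_{0}$ for free.

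One caveat on your final paragraph: the claim that uniqueness for \eqref{4.2'} upgrades the convergence to the whole sequence does not work here, because the limit problem itself depends on $r$, and $r$ is only determined as a weak$\ast$ cluster point of $(\delta_{a/\varepsilon})$ along the chosen subsequence. Different subsequences may produce different elements $r\in K(\Delta(A))$ and hence genuinely different limit equations, so the ``same uniquely-solvable problem'' premise fails. This does not damage your proof of the theorem as stated, since the statement only asserts convergence along a subsequence (for which your uniqueness argument still serves to make sense of ``the unique solution'' to \eqref{4.2'}), but the whole-sequence conclusion should be dropped.
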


\begin{proof}
Let $E$ be an ordinary sequence of positive real numbers $\varepsilon $. We
know from Theorem \ref{t4.1'} that the sequence $(u_{\varepsilon
})_{\varepsilon \in E}$ is uniformly integrable in $L^{1}(\mathbb{R}_{T}^{N})
$. As a result of [part (ii) of] Theorem \ref{t2.2}, there exist a
subsequence $E^{\prime }$ of $E$ and a function $v_{0}\in L^{1}(\mathbb{R}%
_{T}^{N};\mathcal{B}_{A}^{1})$ such that, as $E^{\prime }\ni \varepsilon
\rightarrow 0$, we have (\ref{4.1'}) with $u_{0}=\mathcal{G}_{1}\circ v_{0}$%
. On the other hand, in view of Theorem \ref{th5}, $(\delta _{a/\varepsilon
})_{\varepsilon \in E^{\prime }}$ is a sequence in the compact space $%
K(\Delta (A))$, so that there exists a subsequence of $E^{\prime }$ not
relabeled and $r\in K(\Delta (A))$ such that (\ref{4.0}) holds true. The
theorem will be proved once we will check that $u_{0}\in \mathcal{C}%
([0,T];L^{1}(\mathbb{R}^{N}\times \Delta (A)))$ and satisfies (\ref{4.2'}).
In order to do that, first recall that $K\in \mathcal{K}(\mathbb{R}%
^{N}\times (0,T);A)$ since $\mathcal{K}(\mathbb{R}^{N};A)\subset \mathcal{K}(%
\mathbb{R}^{N}\times (0,T);A)$, so that we have 
\begin{equation*}
K^{\varepsilon }\rightarrow K\text{ in }L^{1}(\mathbb{R}_{T}^{N})\text{%
-strong }\Sigma \text{ as }E^{\prime }\ni \varepsilon \rightarrow 0\text{.}
\end{equation*}%
On the other hand, using the assumption (H3) in conjunction with (\ref{4.1'}%
), we are led to 
\begin{equation*}
f^{\varepsilon }(\cdot ,u_{\varepsilon })\rightarrow f(\cdot ,v_{0})\text{
in }L^{1}(\mathbb{R}_{T}^{N})\text{-weak }\Sigma \text{ as }E^{\prime }\ni
\varepsilon \rightarrow 0.
\end{equation*}%
We infer from Theorem \ref{t2.4} that 
\begin{equation*}
K^{\varepsilon }\ast f^{\varepsilon }(\cdot ,u_{\varepsilon })\rightarrow
K\ast \ast f(\cdot ,v_{0})\text{ in }L^{1}(\mathbb{R}_{T}^{N})\text{-weak }%
\Sigma \text{ when }E^{\prime }\ni \varepsilon \rightarrow 0.
\end{equation*}%
It also holds that 
\begin{equation*}
u_{\varepsilon }(\cdot +a,\cdot )\rightarrow w_{0}\text{ in }L^{1}(\mathbb{R}%
_{T}^{N})\text{-when }\Sigma \text{ when }E^{\prime }\ni \varepsilon
\rightarrow 0
\end{equation*}%
where $\widehat{w}_{0}(x,t,s)=u_{0}(x+a,t,sr)$ ($(x,t,s)\in \mathbb{R}%
_{T}^{N}\times \Delta (A)$) with $r$ determined by (\ref{4.0}); this is a
consequence of [part (ii) of] Theorem \ref{p2.4}. Finally, noting that Eq. (%
\ref{3.1'}) is equivalent to the following integral equation 
\begin{equation*}
u_{\varepsilon }(x,t)=u^{0}(x)+\int_{0}^{t}\left[ (K^{\varepsilon }\ast
f^{\varepsilon }(\cdot ,u_{\varepsilon }))(x,\tau )-u_{\varepsilon
}(x+a,\tau )\right] d\tau ,
\end{equation*}%
we are led (after passing to the limit when $E^{\prime }\ni \varepsilon
\rightarrow 0$ and using Fubini and Lebesgue dominated convergence results
in the integral term) to 
\begin{equation*}
v_{0}(x,t,y)=u^{0}(x)+\int_{0}^{t}\left[ (K\ast \ast f(\cdot ,v_{0}))(x,\tau
,y)-w_{0}(x,\tau ,y)\right] d\tau .
\end{equation*}%
Therefore, composing both members of the above equality by $\mathcal{G}_{1}$%
, we end up with 
\begin{equation*}
u_{0}(x,t,s)=u^{0}(x)+\int_{0}^{t}\left[ (\widehat{K}\ast \ast \widehat{f}%
(\cdot ,u_{0}))(x,\tau ,s)-u_{0}(x+a,\tau ,sr)\right] d\tau 
\end{equation*}%
which is nothing else but the integral form of (\ref{4.2'}). We also
conclude from the preceding equation that $u_{0}$ lies in $\mathcal{C}%
([0,T];L^{1}(\mathbb{R}^{N}\times \Delta (A)))$ as expected. Whence the
proof.
\end{proof}

\section{Homogenization of a nonlocal nonlinear heat equation}

We consider the following non local boundary value problem 
\begin{equation}
\begin{array}{l}
\rho ^{\varepsilon }\frac{\partial u_{\varepsilon }}{\partial t}-\Div %
a^{\varepsilon }\left( \cdot ,\cdot ,\nabla u_{\varepsilon }\right)
+K^{\varepsilon }\ast a_{0}^{\varepsilon }\left( \cdot ,\cdot ,\nabla
u_{\varepsilon }\right) =f\text{ in }Q_{T} \\ 
\ \ \ \ \ \ \ \ \ \ \ \ \ \ \ \ \ \ \ \ \ \ \ \ \ \ \ \ \ \ \ \ \ \ \ \ \ \
\ \ \ u_{\varepsilon }=0\text{ on }\partial \Omega \times \left( 0,T\right)
\\ 
\ \ \ \ \ \ \ \ \ \ \ \ \ \ \ \ \ \ \ \ \ \ \ \ \ \ \ \ \ \ \ \ \ \ \ \ \ \
\ \ \ u_{\varepsilon }(x,0)=u^{0}(x)\text{ in }\Omega%
\end{array}
\label{4.1}
\end{equation}%
where $\Omega $ is a bounded smooth open set in $\mathbb{R}^{N}$, $\rho
^{\varepsilon }(x)=\rho (x/\varepsilon )$, $a^{\varepsilon }\left( \cdot
,\cdot ,\nabla u_{\varepsilon }\right) (x,t)=a\left( x/\varepsilon
,t/\varepsilon ,\nabla u_{\varepsilon }(x,t)\right) $ (same definition for $%
a_{0}^{\varepsilon }\left( \cdot ,\cdot ,\nabla u_{\varepsilon }\right) $), $%
K^{\varepsilon }(x,t)=K\left( x/\varepsilon ,t/\varepsilon \right) $, 
\begin{equation*}
(K^{\varepsilon }\ast a_{0}^{\varepsilon }\left( \cdot ,\cdot ,\nabla
u_{\varepsilon }\right) )(x,t)=\int_{0}^{\infty }K\left( \frac{x}{%
\varepsilon },\frac{t-\tau }{\varepsilon }\right) a_{0}\left( \frac{x}{%
\varepsilon },\frac{\tau }{\varepsilon },\nabla u_{\varepsilon }(x,\tau
)\right) d\tau \text{,}
\end{equation*}%
the functions $a:\mathbb{R}^{N}\times \mathbb{R}\times \mathbb{R}%
^{N}\rightarrow \mathbb{R}^{N}$, $a_{0}:\mathbb{R}^{N}\times \mathbb{R}%
\times \mathbb{R}^{N}\rightarrow \mathbb{R}$, $K:\mathbb{R}^{N}\times 
\mathbb{R}\rightarrow \mathbb{R}$ and $\rho :\mathbb{R}^{N}\rightarrow 
\mathbb{R}$ being constrained as follows:

\bigskip

\begin{itemize}
\item[\textbf{A1}] 
\begin{equation}
\text{For each }\lambda \in \mathbb{R}^{N}\text{, the functions }a(\cdot
,\cdot ,\lambda )\text{ and }a_{0}(\cdot ,\cdot ,\lambda )\text{ are
measurable;}  \label{4.2}
\end{equation}%
\begin{equation}
a(y,\tau ,0)=0\text{ almost everywhere (a.e.) in }(y,\tau )\in \mathbb{R}%
^{N}\times \mathbb{R}\text{;}  \label{4.3}
\end{equation}%
\begin{equation}
\begin{array}{l}
\text{There are three constants }c_{0},\,c_{1},\,c_{2}>0\text{ such that }
\\ 
\text{(i) }\left( a(y,\tau ,\lambda )-a(y,\tau ,\lambda ^{\prime })\right)
\cdot \left( \lambda -\lambda ^{\prime }\right) \geq c_{1}\left\vert \lambda
-\lambda ^{\prime }\right\vert ^{2} \\ 
\text{(ii) }\left\vert a(y,\tau ,\lambda )\right\vert +\left\vert
a_{0}(y,\tau ,\lambda )\right\vert \leq c_{2}(1+\left\vert \lambda
\right\vert ) \\ 
\text{(iii) }\left\vert a(y,\tau ,\lambda )-a(y,\tau ,\lambda ^{\prime
})\right\vert +\left\vert a_{0}(y,\tau ,\lambda )-a_{0}(y,\tau ,\lambda
^{\prime })\right\vert \leq c_{0}\left\vert \lambda -\lambda ^{\prime
}\right\vert \\ 
\text{for all }\lambda ,\lambda ^{\prime }\in \mathbb{R}^{N}\text{ and a.e.
in }(y,\tau )\in \mathbb{R}^{N}\times \mathbb{R}\text{, where the dot denotes%
} \\ 
\text{the usual Euclidean inner product in }\mathbb{R}^{N}\text{ and }%
\left\vert \cdot \right\vert \text{ the associated norm.}%
\end{array}
\label{4.4}
\end{equation}

\item[\textbf{A2}] $K\in L^{1}(\mathbb{R}^{N+1})$, $\rho \in L^{\infty }(%
\mathbb{R}^{N})$ and there exists $\Lambda >0$ such that $\Lambda ^{-1}\leq
\rho (y)\leq \Lambda $\ for a.e. $y\in \mathbb{R}^{N}$.
\end{itemize}

\medskip It is well known that the functions $a^{\varepsilon }(\cdot ,\cdot
,Dv)$ and $a_{0}^{\varepsilon }(\cdot ,\cdot ,Dv)$ (for fixed $v\in
L^{2}(0,T;W_{0}^{1,2}(\Omega ))$), $K^{\varepsilon }$ and $\rho
^{\varepsilon }$ are well defined as elements of $L^{2}(Q_{T})^{N}$, $%
L^{2}(Q_{T})^{N}$, $L^{1}(Q_{T})$ and $L^{\infty }(\Omega )$ respectively
and satisfy properties of the same type as in \textbf{A1}-\textbf{A2}.
Finally, choose $f$ in $L^{2}(Q_{T})$ and $u^{0}\in L^{2}(\Omega )$. Our
first objective in this section is to provide an existence and uniqueness
result for problem (\ref{4.1}).

\begin{theorem}
\label{t4.1}For any fixed $\varepsilon >0$, the problem \emph{(\ref{4.1})}
possesses a unique solution $u^{\varepsilon }\in
L^{2}(0,T;W_{0}^{1,2}(\Omega ))\cap \mathcal{C}(0,T;L^{2}(\Omega ))$ and the
following a priori estimates holds:%
\begin{equation}
\sup_{0\leq t\leq T}\left\Vert u_{\varepsilon }(t)\right\Vert _{L^{2}(\Omega
)}^{2}\leq C\text{ and }\int_{0}^{T}\left\Vert u_{\varepsilon
}(t)\right\Vert _{W_{0}^{1,2}(\Omega )}^{2}dt\leq C  \label{4.5}
\end{equation}%
where $C$ is a positive constant which does not depend on $\varepsilon $.
\end{theorem}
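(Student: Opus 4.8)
The plan is to treat $\varepsilon>0$ as a fixed parameter and to read \eqref{4.1} as an abstract evolution equation in the Gelfand triple $V=W_{0}^{1,2}(\Omega)\hookrightarrow H=L^{2}(\Omega)\hookrightarrow V'$, with the nonlocal convolution handled as a Lipschitz perturbation of a monotone principal part. First I would isolate that principal part: by (\ref{4.4}) the map $A^{\varepsilon}(t)v=-\Div a^{\varepsilon}(\cdot,t,\nabla v)$ is, for a.e. $t$, bounded and hemicontinuous from $V$ into $V'$ and monotone by (i), and it is coercive since \eqref{4.3} together with (i) (taking $\lambda'=0$) gives $a^{\varepsilon}(y,\tau,\lambda)\cdot\lambda\geq c_{1}|\lambda|^{2}$; measurability in $t$ comes from \eqref{4.2}. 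Combined with the bounds $\Lambda^{-1}\leq\rho^{\varepsilon}\leq\Lambda$ of \textbf{A2} (which merely replace the $H$-inner product by an equivalent weighted one), the purely local Cauchy problem $\rho^{\varepsilon}\partial_{t}u-\Div a^{\varepsilon}(\cdot,\cdot,\nabla u)=g$, $u=0$ on $\partial\Omega\times(0,T)$, $u(0)=u^{0}$, admits for every $g\in L^{2}(0,T;V')$ a unique solution $u\in L^{2}(0,T;V)\cap\mathcal{C}([0,T];H)$ with $\partial_{t}u\in L^{2}(0,T;V')$, by the classical theory of monotone parabolic operators. Denote by $g\mapsto\mathcal{L}^{-1}g$ the resulting solution map.

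Second, I would bring in the memory term through a contraction argument. For $w\in L^{2}(0,T;V)$ set $N(w)=K^{\varepsilon}\ast a_{0}^{\varepsilon}(\cdot,\cdot,\nabla w)$ and define $\Phi(w)=\mathcal{L}^{-1}(f-N(w))$. The growth bound (ii) of \eqref{4.4} shows $a_{0}^{\varepsilon}(\cdot,\cdot,\nabla w)\in L^{2}(Q_{T})$, and Young's inequality applied to the time-convolution against $K^{\varepsilon}$ (integrable in $\tau$ for a.e. $x$ by \textbf{A2}) shows $N(w)\in L^{2}(Q_{T})\hookrightarrow L^{2}(0,T;V')$; the Lipschitz estimate (iii) makes $w\mapsto N(w)$ Lipschitz from $L^{2}(0,T;V)$ into $L^{2}(Q_{T})$. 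Testing the difference of two local problems with $\Phi(w_{1})-\Phi(w_{2})$ and invoking the monotonicity (i) gives continuous dependence $\|\Phi(w_{1})-\Phi(w_{2})\|\leq C\|N(w_{1})-N(w_{2})\|$, so $\Phi$ is Lipschitz. I would then turn this into a genuine contraction either by re-norming $L^{2}(0,T;V)$ with an exponential weight $e^{-\lambda t}$ for $\lambda$ large, or equivalently by solving on a short time interval and iterating, the uniform bound below ensuring no blow-up; the Banach fixed-point theorem then produces the unique solution $u_{\varepsilon}$.

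Third, for the estimates \eqref{4.5} I would test \eqref{4.1} with $u_{\varepsilon}(\cdot,t)$. The time-derivative term gives $\tfrac{1}{2}\tfrac{d}{dt}\int_{\Omega}\rho^{\varepsilon}u_{\varepsilon}^{2}$, coercivity gives $\int_{\Omega}a^{\varepsilon}(\cdot,\cdot,\nabla u_{\varepsilon})\cdot\nabla u_{\varepsilon}\geq c_{1}\|\nabla u_{\varepsilon}\|_{L^{2}(\Omega)}^{2}$, the forcing is bounded by $\|f\|\,\|u_{\varepsilon}\|$, and for the memory term I would use Young's inequality in time to bound $\|N(u_{\varepsilon})\|_{L^{2}(Q_{t})}$ by the $L^{1}$-size of $K^{\varepsilon}$ times $\|a_{0}^{\varepsilon}(\cdot,\cdot,\nabla u_{\varepsilon})\|_{L^{2}(Q_{t})}$, itself $\leq c_{2}(|Q_{T}|^{1/2}+\|\nabla u_{\varepsilon}\|_{L^{2}(Q_{t})})$ by (ii). Splitting these products with Young's inequality, absorbing the resulting $\|\nabla u_{\varepsilon}\|^{2}$ into the coercive term, and applying Gronwall's lemma closes the argument and yields \eqref{4.5}. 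Uniqueness follows from the same identity applied to the difference of two solutions, the monotone principal part being nonnegative and Gronwall absorbing the Lipschitz memory contribution.

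The hard part will be the convolution-in-time term. Being nonlocal in $t$ and non-monotone, it cannot be folded into the coercive principal operator and instead forces the fixed-point and Gronwall machinery above; in particular the \emph{global-in-time} closure of the contraction (the kernel couples all times) requires the weighted-norm or iteration device. Even more delicate is securing an $\varepsilon$-\emph{independent} constant in \eqref{4.5} rather than one that degenerates through the rescaling of $K^{\varepsilon}$ and $a_{0}^{\varepsilon}$: this hinges on estimating the convolution operator uniformly in $\varepsilon$ via the $L^{1}$ control on $K$ furnished by \textbf{A2}.
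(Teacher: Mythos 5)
Your proposal follows essentially the same route as the paper: freeze the convolution argument at $z$, solve the resulting local monotone parabolic problem by the classical theory (the paper cites Showalter), and show the map $z\mapsto u_{\varepsilon}$ is a contraction on $L^{2}(0,T;W_{0}^{1,2}(\Omega))\cap\mathcal{C}(0,T;L^{2}(\Omega))$ using monotonicity, Poincar\'{e} and Young's inequality for the time-convolution, the only cosmetic difference being that the paper closes the contraction by shrinking $T$ so that $C\sqrt{T}<1$ and then iterating, where you propose the equivalent exponential re-norming. Your sketch of the energy estimate for (\ref{4.5}) supplies the detail the paper dismisses as ``mere routine,'' and is correct.
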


\begin{proof}
Let $z\in L^{2}(0,T;W_{0}^{1,2}(\Omega ))\cap \mathcal{C}(0,T;L^{2}(\Omega
)) $, and consider the following boundary value problem 
\begin{equation}
\begin{array}{l}
\rho ^{\varepsilon }\frac{\partial u_{\varepsilon }}{\partial t}-\Div %
a^{\varepsilon }\left( \cdot ,\cdot ,\nabla u_{\varepsilon }\right)
=f-K^{\varepsilon }\ast a_{0}^{\varepsilon }\cdot ,\cdot ,\nabla z)\text{ in 
}Q_{T} \\ 
\ \ \ \ \ \ \ \ \ \ \ \ \ \ \ \ \ \ \ \ \ \ \ \ \ \ \ \ \ \ \ \ \ \ \ \ \ \
\ \ \ u_{\varepsilon }=0\text{ on }\partial \Omega \times \left( 0,T\right)
\\ 
\ \ \ \ \ \ \ \ \ \ \ \ \ \ \ \ \ \ \ \ \ \ \ \ \ \ \ \ \ \ \ \ \ \ \ \ \ \
\ \ \ u_{\varepsilon }(x,0)=u^{0}(x)\text{ in }\Omega .%
\end{array}
\label{4.6}
\end{equation}%
Using a\ standard fashion (see e.g. \cite{Show}), we derive the existence
and uniqueness of a solution $u_{\varepsilon }\in
L^{2}(0,T;W_{0}^{1,2}(\Omega ))\cap \mathcal{C}(0,T;L^{2}(\Omega ))$ to (\ref%
{4.6}). Thus we have defined a mapping $z\mapsto u_{\varepsilon }$ from $%
X=L^{2}(0,T;W_{0}^{1,2}(\Omega ))\cap \mathcal{C}(0,T;L^{2}(\Omega ))$ into
itself. We need to show that this mapping is contractive. To this end, let
us endow $X$ with the norm 
\begin{equation*}
\left\Vert u\right\Vert _{X}=\left( \sup_{0\leq t\leq T}\left\Vert
u(t)\right\Vert _{L^{2}(\Omega )}^{2}+\left\Vert u\right\Vert
_{L^{2}(0,T;W_{0}^{1,2}(\Omega ))}^{2}\right) ^{\frac{1}{2}}\ \ (u\in X).
\end{equation*}%
Let $z_{1},z_{2}\in X$, and consider, for $j=1,2$, the solution $u_{j}$ of
the corresponding PDE. We have, for any $\phi \in
L^{2}(0,T;W_{0}^{1,2}(\Omega ))$ and any $0\leq t\leq T$, 
\begin{eqnarray*}
&&\int_{0}^{t}\left( \rho ^{\varepsilon }\frac{\partial u_{j}}{\partial t}%
,\phi \right) ds+\int_{Q_{t}}a^{\varepsilon }(\cdot ,\cdot ,\nabla
u_{j})\cdot \nabla \phi dxds \\
&=&\int_{Q_{t}}f\phi dxds+\int_{Q_{t}}(K^{\varepsilon }\ast
a_{0}^{\varepsilon }(\cdot ,\cdot ,\nabla z_{j}))\phi dxds,
\end{eqnarray*}%
where $Q_{t}=\Omega \times \left( 0,t\right) $, hence 
\begin{eqnarray*}
&&\int_{0}^{t}\left( \rho ^{\varepsilon }\frac{\partial (u_{1}-u_{2})}{%
\partial t},\phi \right) ds+\int_{Q_{t}}(a^{\varepsilon }(\cdot ,\cdot
,\nabla u_{1})-a^{\varepsilon }(\cdot ,\cdot ,\nabla u_{2}))\cdot \nabla
\phi dxds \\
&=&\int_{Q_{t}}(K^{\varepsilon }\ast (a_{0}^{\varepsilon }(\cdot ,\cdot
,\nabla z_{1})-a_{0}^{\varepsilon }(\cdot ,\cdot ,\nabla z_{2})))\phi dxds.
\end{eqnarray*}%
Taking $\phi =u_{1}-u_{2}$, assumptions \textbf{A1}-\textbf{A2} entail 
\begin{eqnarray*}
&&\frac{1}{2}\Lambda ^{-1}\left\Vert u_{1}(t)-u_{2}(t)\right\Vert
_{L^{2}(\Omega )}^{2}+c_{1}\int_{Q_{t}}\left\vert \nabla u_{1}-\nabla
u_{2}\right\vert ^{2}dxds \\
&\leq &\left\Vert K^{\varepsilon }\ast (a_{0}^{\varepsilon }(\cdot ,\cdot
,\nabla u_{1})-a_{0}^{\varepsilon }(\cdot ,\cdot ,\nabla u_{2}))\right\Vert
_{L^{2}(Q_{t})}\left\Vert u_{1}-u_{2}\right\Vert _{L^{2}(Q_{t})}.
\end{eqnarray*}%
But, in view of Poincar\'{e}'s inequality, there is a positive constant $%
\alpha $ depending only on $\Omega $ such that 
\begin{equation*}
\left\Vert u_{1}-u_{2}\right\Vert _{L^{2}(Q_{t})}\leq \alpha \left\Vert
\nabla u_{1}-\nabla u_{2}\right\Vert _{L^{2}(Q_{t})}^{2}\text{ for any }%
0\leq t\leq T\text{,}
\end{equation*}%
hence 
\begin{eqnarray*}
&&\left\Vert K^{\varepsilon }\ast (a_{0}^{\varepsilon }(\cdot ,\cdot ,\nabla
u_{1})-a_{0}^{\varepsilon }(\cdot ,\cdot ,\nabla u_{2}))\right\Vert
_{L^{2}(Q_{t})}\left\Vert u_{1}-u_{2}\right\Vert _{L^{2}(Q_{t})} \\
&\leq &\alpha \left\Vert K^{\varepsilon }\ast (a_{0}^{\varepsilon }(\cdot
,\cdot ,\nabla u_{1})-a_{0}^{\varepsilon }(\cdot ,\cdot ,\nabla
u_{2}))\right\Vert _{L^{2}(Q_{t})}\left\Vert \nabla u_{1}-\nabla
u_{2}\right\Vert _{L^{2}(Q_{t})} \\
&\leq &\frac{\alpha ^{2}}{2c_{1}}\left\Vert K^{\varepsilon }\ast
(a_{0}^{\varepsilon }(\cdot ,\cdot ,\nabla z_{1})-a_{0}^{\varepsilon }(\cdot
,\cdot ,\nabla z_{2}))\right\Vert _{L^{2}(Q_{t})}^{2}+\frac{c_{1}}{2}%
\left\Vert \nabla u_{1}-\nabla u_{2}\right\Vert _{L^{2}(Q_{t})}^{2}.
\end{eqnarray*}%
It readily holds that 
\begin{eqnarray*}
&&\frac{1}{2}\Lambda ^{-1}\left\Vert u_{1}(t)-u_{2}(t)\right\Vert
_{L^{2}(\Omega )}^{2}+\frac{c_{1}}{2}\int_{Q_{t}}\left\vert \nabla
u_{1}-\nabla u_{2}\right\vert ^{2}dxds \\
&\leq &\frac{\alpha ^{2}}{2c_{1}}\left\Vert K^{\varepsilon }\right\Vert
_{L^{1}(Q_{t})}^{2}\left\Vert a_{0}^{\varepsilon }(\cdot ,\cdot ,\nabla
z_{1})-a_{0}^{\varepsilon }(\cdot ,\cdot ,\nabla z_{2})\right\Vert
_{L^{2}(Q_{t})}^{2} \\
&\leq &\frac{\alpha ^{2}}{2c_{1}}\left\Vert K\right\Vert _{L^{1}(\mathbb{R}%
^{N+1})}^{2}\left\Vert \nabla z_{1}-\nabla z_{2}\right\Vert
_{L^{2}(Q_{T})}^{2}\text{ for a.e. }0\leq t\leq T.
\end{eqnarray*}%
Taking the supremum over $[0,T]$ we end up with 
\begin{eqnarray*}
&&\min \left( \frac{1}{2}\Lambda ^{-1},\frac{c_{1}}{2}\right) \left[
\sup_{0\leq t\leq T}\left\Vert u_{1}(t)-u_{2}(t)\right\Vert _{L^{2}(\Omega
)}^{2}+\int_{Q_{T}}\left\vert \nabla u_{1}-\nabla u_{2}\right\vert ^{2}dxds%
\right] \\
&\leq &\frac{\alpha ^{2}T}{2c_{1}}\left\Vert K\right\Vert _{L^{1}(\mathbb{R}%
^{N+1})}^{2}\left\Vert \nabla z_{1}-\nabla z_{2}\right\Vert
_{L^{2}(Q_{T})}^{2},
\end{eqnarray*}%
hence 
\begin{equation*}
\left\Vert u_{1}-u_{2}\right\Vert _{X}\leq C\sqrt{T}\left\Vert
z_{1}-z_{2}\right\Vert _{X}\text{ \ where }C=\left[ \frac{\frac{\alpha ^{2}}{%
2c_{1}}\left\Vert K\right\Vert _{L^{1}(\mathbb{R}^{N+1})}^{2}}{\min \left( 
\frac{1}{2}\Lambda ^{-1},\frac{c_{1}}{2}\right) }\right] ^{\frac{1}{2}}.
\end{equation*}%
Consequently, by shrinking $T>0$ in such a way that $C\sqrt{T}<1$, it
emerges that the above mapping is contractive. The rest of the proof follows
by mere routine.
\end{proof}

\begin{remark}
\label{r4.1}\emph{It follows from the inequalities (\ref{4.5}) that the
sequence }$(u_{\varepsilon })_{\varepsilon >0}$\emph{\ determined above is
relatively compact in the space }$L^{2}(Q_{T})$\emph{.}
\end{remark}

Throughout the rest of this section, $A_{y}$ and $A_{\tau }$ are algebras
with mean value on $\mathbb{R}_{y}^{N}$ and $\mathbb{R}_{\tau }$
respectively. We set $A=A_{y}\odot A_{\tau }$ and further we assume that $%
A_{\tau }$ is introverted. It is worth noting that property (\ref{2.3'}) is
still valid for $\psi \in \mathcal{C}(\overline{Q}_{T};B_{A}^{2,\infty })$
where $B_{A}^{2,\infty }=B_{A}^{2}\cap L^{\infty }(\mathbb{R}_{y,\tau
}^{N+1})$.

Bearing this in mind, let $(u_{\varepsilon })_{\varepsilon >0}$ be the
sequence of solutions to (\ref{4.1}). Our main objective here amounts to
study the asymptotic behaviour as $\varepsilon \rightarrow 0$, of $%
(u_{\varepsilon })_{\varepsilon >0}$. This will of course arise from the
following important assumption: 
\begin{equation}
\begin{array}{l}
a_{0}(\cdot ,\cdot ,\lambda )\in B_{A}^{2}\text{ and }a(\cdot ,\cdot
,\lambda )\in (B_{A}^{2})^{N}\text{ for any }\lambda \in \mathbb{R}^{N}, \\ 
K\in B_{A}^{1}\text{ and }\rho \in A_{y}\text{ with }M(\rho )>0.%
\end{array}
\label{4.8}
\end{equation}

The homogenization of problems of type (\ref{4.1}) has been left opened in 
\cite{AD86} in which the authors considered only the linear version of such
type of equations. They used the Laplace transform to perform the
homogenization process. Our work is therefore the first one in which the
homogenization of (\ref{4.1}) is considered, even in the periodic setting.

This being so, let $\Psi \in \mathcal{C}(\overline{Q}_{T};(A)^{N})$. Suppose
that (\ref{4.8}) is satisfied. It can be shown (as in \cite{EJDE}) that the
function $(x,t,y,\tau )\mapsto a(y,\tau ,\Psi (x,t,y,\tau ))$, denoted below
by $a(\cdot ,\cdot ,\Psi )$, lies in $\mathcal{C}(\overline{Q}%
_{T};(B_{A}^{2,\infty })^{N})$, and we can therefore define its trace $%
(x,t)\mapsto a(x/\varepsilon ,t/\varepsilon ,\Psi (x,t,x/\varepsilon
,t/\varepsilon ))$ ($\varepsilon >0$) denoted by $a^{\varepsilon }(\cdot
,\cdot ,\Psi ^{\varepsilon })$. The same is true for $a_{0}(\cdot ,\cdot
,\Psi )$ and $a_{0}^{\varepsilon }(\cdot ,\cdot ,\Psi ^{\varepsilon })$.

The proof of the next two results can be found in \cite{EJDE} (see
Proposition 3.1 and Corollary 3.1 therein).

\begin{proposition}
\label{p4.1}Suppose \emph{(\ref{4.8})} holds. For $\Psi \in \mathcal{C}(%
\overline{Q}_{T};(A)^{N}))$ we have 
\begin{equation*}
a^{\varepsilon }(\cdot ,\cdot ,\Psi ^{\varepsilon })\rightarrow a(\cdot
,\cdot ,\Psi )\text{ in }L^{2}(Q_{T})^{N}\text{-weak }\Sigma \text{ as }%
\varepsilon \rightarrow 0.
\end{equation*}%
The mapping $\Psi \mapsto a(\cdot ,\cdot ,\Psi )$ of $\mathcal{C}(\overline{Q%
}_{T};(A)^{N}))$ into $L^{2}(Q_{T};B_{A}^{2})^{N}$ extends by continuity to
a unique mapping still denoted by $a$, of $L^{2}(Q_{T};(B_{A}^{2})^{N})$
into $L^{2}(Q_{T};B_{A}^{2})^{N}$ such that 
\begin{equation*}
(a(\cdot ,\cdot ,\mathbf{v})-a(\cdot ,\cdot ,\mathbf{w}))\cdot (\mathbf{v}-%
\mathbf{w})\geq c_{1}\left\vert \mathbf{v}-\mathbf{w}\right\vert ^{2}\text{\
a.e. in }Q_{T}\times \mathbb{R}_{y}^{N}\times \mathbb{R}_{\tau }
\end{equation*}%
\begin{equation*}
\left\Vert a(\cdot ,\cdot ,\mathbf{v})-a(\cdot ,\cdot ,\mathbf{w}%
)\right\Vert _{L^{2}(Q_{T};B_{A}^{2})^{N}}\leq c_{0}\left\Vert \mathbf{v}-%
\mathbf{w}\right\Vert _{L^{2}(Q_{T};(B_{A}^{2})^{N})}
\end{equation*}%
\begin{equation*}
a(\cdot ,\cdot ,0)=0\;\;\text{a.e. in }\mathbb{R}_{y}^{N}\times \mathbb{R}%
_{\tau }\;\;\;\;\;\;\;\;\;\;\;\;\;\;\;\;\;\;\;\;\;\;\;\;\;\;
\end{equation*}%
for all $\mathbf{v},\mathbf{w}\in L^{2}(Q_{T};(B_{A}^{2})^{N})$.
\end{proposition}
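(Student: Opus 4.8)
The plan is to treat the two assertions separately: first the $\Sigma$-convergence of the trace $a^{\varepsilon}(\cdot ,\cdot ,\Psi ^{\varepsilon })$, and then the extension of the Nemytskii-type map $\Psi \mapsto a(\cdot ,\cdot ,\Psi )$ to all of $L^{2}(Q_{T};(B_{A}^{2})^{N})$. Throughout I take as granted the fact recalled just above the statement, namely that for $\Psi \in \mathcal{C}(\overline{Q}_{T};(A)^{N})$ the composite $(x,t,y,\tau )\mapsto a(y,\tau ,\Psi (x,t,y,\tau ))$ belongs to $\mathcal{C}(\overline{Q}_{T};(B_{A}^{2,\infty })^{N})$, so that both its trace $a^{\varepsilon }(\cdot ,\cdot ,\Psi ^{\varepsilon })$ and its Gelfand transform $\widehat{a(\cdot ,\cdot ,\Psi )}$ are legitimate objects.

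For the first assertion I would unfold the definition of weak $\Sigma$-convergence (Definition \ref{d2.1}): for $f\in L^{2}(Q_{T};A)$ I must show that $\int_{Q_{T}}a^{\varepsilon }(\cdot ,\cdot ,\Psi ^{\varepsilon })\cdot f^{\varepsilon }\,dxdt$ converges to $\iint_{Q_{T}\times \Delta (A)}\widehat{a(\cdot ,\cdot ,\Psi )}\cdot \widehat{f}\,dxdtd\beta$. The integrand on the left is exactly the trace of $G(x,t)=a(\cdot ,\cdot ,\Psi (x,t,\cdot ,\cdot ))\cdot f(x,t,\cdot ,\cdot )$, which by H\"{o}lder lies in $\mathcal{C}(\overline{Q}_{T};B_{A}^{1})$. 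Hence the whole matter reduces to the generalized mean-value (Riemann--Lebesgue) convergence $\int_{Q_{T}}G^{\varepsilon }\,dxdt\to \iint_{Q_{T}\times \Delta (A)}\widehat{G}\,dxdtd\beta$ for continuous $B_{A}^{1}$-valued integrands. I would establish this first for genuinely $A$-valued $G$, where it is the defining property of the mean value made uniform in $(x,t)$ by compactness of $\overline{Q}_{T}$, and then pass to $B_{A}^{1}$-valued $G$ using the density of $A$ in $B_{A}^{1}$ together with the uniform $L^{1}$-bound on the traces. The Lipschitz estimate (\ref{4.4})(iii) is what permits replacing $\Psi $, when convenient, by simple tensors $\sum_{k}\varphi _{k}\otimes g_{k}$ with $\varphi _{k}\in \mathcal{C}(\overline{Q}_{T})$ and $g_{k}\in A^{N}$, for which the composite is directly accessible.

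For the second assertion I would first record the two structural inequalities at the level of $\mathcal{C}(\overline{Q}_{T};(A)^{N})$. Starting from the pointwise bounds (\ref{4.4})(i) and (\ref{4.4})(iii) and applying the mean value $M$ (equivalently $\int_{\Delta (A)}\cdots d\beta $), I obtain the monotonicity and Lipschitz estimates for $\Psi \mapsto a(\cdot ,\cdot ,\Psi )$ as a map into $L^{2}(Q_{T};B_{A}^{2})^{N}$, while $a(\cdot ,\cdot ,0)=0$ follows from (\ref{4.3}). The Lipschitz bound makes this map uniformly continuous, so its unique continuous extension to the closure of $\mathcal{C}(\overline{Q}_{T};(A)^{N})$ in $L^{2}(Q_{T};(B_{A}^{2})^{N})$ exists; density of $\mathcal{C}(\overline{Q}_{T};(A)^{N})$ in $L^{2}(Q_{T};(B_{A}^{2})^{N})$, which follows from the density of $A$ in $B_{A}^{2}$, identifies this closure with the whole space. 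The three listed properties then persist by passing to the limit along approximating sequences; to recover the monotonicity as a pointwise (a.e. in $(y,\tau )$) statement I would pass to an a.e.-convergent subsequence of the approximants.

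I expect the genuine difficulty to sit in the first assertion, more precisely in the passage from $A$-valued to $B_{A}^{2,\infty }$-valued integrands: the clean principle that the trace of an admissible function $\Sigma$-converges to itself applies verbatim only to genuinely continuous $A$-valued functions, whereas $a(\cdot ,\cdot ,\Psi )$ is merely $B_{A}^{2,\infty }$-valued, so one must interpose an approximation controlled uniformly in $\varepsilon $ and justify that the trace operation commutes with that approximation in the limit. The extension step, by contrast, is the standard monotone/Lipschitz-operator-by-density argument and should present no real obstacle.
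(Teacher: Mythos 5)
Your proposal is correct and follows the standard route: the paper itself does not prove Proposition \ref{p4.1} but defers to \cite{EJDE} (Proposition 3.1 therein), and your two-step argument --- reduce the weak $\Sigma $-convergence of the trace to the mean-value convergence $\int_{Q_{T}}G^{\varepsilon }\rightarrow \iint \widehat{G}$ for $\mathcal{C}(\overline{Q}_{T};B_{A}^{1})$-valued integrands via the density of $A$ in $B_{A}^{1}$ and a uniform bound on traces, then extend the Nemytskii map by the Lipschitz estimate (\ref{4.4})(iii) and density --- is exactly the argument of that reference. The one point you should make explicit is that your claim that $G(x,t)=a(\cdot ,\cdot ,\Psi (x,t,\cdot ,\cdot ))\cdot f(x,t,\cdot ,\cdot )$ lies in $\mathcal{C}(\overline{Q}_{T};B_{A}^{1})$ presupposes a continuous test function, so one must first reduce from $f\in L^{2}(Q_{T};A)$ to $f\in \mathcal{C}(\overline{Q}_{T};A)$ using the density of the latter together with the uniform $L^{2}$-bound on $a^{\varepsilon }(\cdot ,\cdot ,\Psi ^{\varepsilon })$ furnished by (\ref{4.4})(ii).
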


\begin{corollary}
\label{c4.1}Let $\psi _{0}\in \mathcal{C}_{0}^{\infty }(Q_{T})$ and $\psi
_{1}\in \mathcal{C}_{0}^{\infty }(Q_{T})\otimes A^{\infty }$. For $%
\varepsilon >0$, let 
\begin{equation}
\Phi _{\varepsilon }=\psi _{0}+\varepsilon \psi _{1}^{\varepsilon
},\;\;\;\;\;\;\;\;\;\;\;\;\;\;  \label{4.9}
\end{equation}%
i.e., $\Phi _{\varepsilon }(x,t)=\psi _{0}(x,t)+\varepsilon \psi
_{1}(x,t,x/\varepsilon ,t/\varepsilon )$\ for $(x,t)\in Q_{T}$. Let $%
(v_{\varepsilon })_{\varepsilon \in E}$ is a sequence in $L^{2}(Q_{T})^{N}$
such that $v_{\varepsilon }\rightarrow v_{0}$ in $L^{2}(Q_{T})$-weak $\Sigma 
$ as $E\ni \varepsilon \rightarrow 0$ where $\mathbf{v}_{0}\in L^{2}(Q_{T};%
\mathcal{B}_{A}^{2})$, then, as $E\ni \varepsilon \rightarrow 0$, 
\begin{equation*}
\int_{Q_{T}}a^{\varepsilon }(\cdot ,\cdot ,D\Phi _{\varepsilon
})v_{\varepsilon }dxdt\rightarrow \iint_{Q_{T}\times \Delta (A_{y})\times
K(\Delta (A_{\tau }))}\widehat{a}(\cdot ,\cdot ,D\psi _{0}+\partial \widehat{%
\psi }_{1})\widehat{v}_{0}dxdtd\beta \text{.}
\end{equation*}
\end{corollary}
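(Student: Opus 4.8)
The plan is to reduce the identity to a single application of the weak $\Sigma$-convergence of $(v_\varepsilon)$ tested against a \emph{fixed} admissible function, after disposing of the $O(\varepsilon)$ corrector sitting in the argument of $a^\varepsilon$. To this end I would first compute $D\Phi_\varepsilon$. Writing $\psi_1^\varepsilon(x,t)=\psi_1(x,t,x/\varepsilon,t/\varepsilon)$ and applying the chain rule in the spatial variable gives $\varepsilon D(\psi_1^\varepsilon)=(D_y\psi_1)^\varepsilon+\varepsilon(D_x\psi_1)^\varepsilon$, where $D_x$ (resp. $D_y$) denotes the gradient in the macroscopic (resp. microscopic) variable. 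Hence, setting
\[
\Psi:=D\psi_0+D_y\psi_1,
\]
which lies in $\mathcal{C}(\overline{Q}_T;(A)^N)$ because $\psi_0\in\mathcal{C}_0^\infty(Q_T)$ and $\psi_1\in\mathcal{C}_0^\infty(Q_T)\otimes A^\infty$, one obtains $D\Phi_\varepsilon=\Psi^\varepsilon+\varepsilon(D_x\psi_1)^\varepsilon$.

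I would then remove the corrector term. By the Lipschitz bound (iii) in \textbf{A1} (see (\ref{4.4})),
\[
\left\Vert a^\varepsilon(\cdot,\cdot,D\Phi_\varepsilon)-a^\varepsilon(\cdot,\cdot,\Psi^\varepsilon)\right\Vert_{L^2(Q_T)^N}\le c_0\,\varepsilon\,\left\Vert(D_x\psi_1)^\varepsilon\right\Vert_{L^2(Q_T)^N}\longrightarrow 0,
\]
since $D_x\psi_1$ is bounded with compact support. As $(v_\varepsilon)$ is bounded in $L^2(Q_T)^N$ (being weakly $\Sigma$-convergent), the Cauchy--Schwarz inequality then yields $\int_{Q_T}(a^\varepsilon(\cdot,\cdot,D\Phi_\varepsilon)-a^\varepsilon(\cdot,\cdot,\Psi^\varepsilon))\cdot v_\varepsilon\,dxdt\to 0$, so that it suffices to pass to the limit in $\int_{Q_T}a^\varepsilon(\cdot,\cdot,\Psi^\varepsilon)\cdot v_\varepsilon\,dxdt$.

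The crucial observation is that $a^\varepsilon(\cdot,\cdot,\Psi^\varepsilon)$ is exactly the $\varepsilon$-trace of the fixed function $a(\cdot,\cdot,\Psi)$, which by Proposition \ref{p4.1} belongs to $\mathcal{C}(\overline{Q}_T;(B_A^{2,\infty})^N)$ and is thus an admissible test function, property (\ref{2.3'}) being valid for such functions. Testing the weak $\Sigma$-convergence $v_\varepsilon\to v_0$ against $a(\cdot,\cdot,\Psi)$ gives
\[
\int_{Q_T}a^\varepsilon(\cdot,\cdot,\Psi^\varepsilon)\cdot v_\varepsilon\,dxdt\longrightarrow\iint_{Q_T\times\Delta(A)}\widehat{a}(\cdot,\cdot,\Psi)\cdot\widehat{v}_0\,dxdtd\beta.
\]
This is the heart of the argument and the only genuine obstacle: the passage to the limit in the product is legitimate precisely because one factor is the trace of a fixed admissible function, not a second weakly $\Sigma$-convergent sequence.

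It then remains to identify the right-hand side. Lemma \ref{l2.2}, in the form $\mathcal{G}_1\circ D_y=\partial\circ\mathcal{G}_1$, gives $\widehat{\Psi}=D\psi_0+\partial\widehat{\psi}_1$, whence $\widehat{a}(\cdot,\cdot,\Psi)=\widehat{a}(\cdot,\cdot,D\psi_0+\partial\widehat{\psi}_1)$. Since $\Delta(A)=\Delta(A_y)\times\Delta(A_\tau)$ with $\beta=\beta_y\otimes\beta_\tau$ and $A_\tau$ is introverted, Theorem \ref{th4}(ii) identifies $\beta_\tau$ with the Haar measure of $K(\Delta(A_\tau))$, so that $\int_{\Delta(A_\tau)}=\int_{K(\Delta(A_\tau))}$ and the limit assumes the stated form over $Q_T\times\Delta(A_y)\times K(\Delta(A_\tau))$.
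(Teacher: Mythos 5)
Your argument is correct and is essentially the standard proof that the paper itself does not reproduce but delegates to \cite{EJDE} (Proposition 3.1 and Corollary 3.1 there): split $D\Phi_\varepsilon=\Psi^\varepsilon+\varepsilon(D_x\psi_1)^\varepsilon$, kill the $O(\varepsilon)$ term with the Lipschitz bound (iii) of \textbf{A1}, test the weak $\Sigma$-convergence of $(v_\varepsilon)$ against the fixed admissible function $a(\cdot,\cdot,\Psi)\in\mathcal{C}(\overline{Q}_T;(B_A^{2,\infty})^N)$ via the extended version of (\ref{2.3'}), and identify the limit using $\mathcal{G}_1\circ D_y=\partial\circ\mathcal{G}_1$ and the fact that $\beta_\tau$ is carried by $K(\Delta(A_\tau))$. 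The only point to tighten is your parenthetical claim that weak $\Sigma$-convergence by itself gives $\sup_\varepsilon\Vert v_\varepsilon\Vert_{L^2(Q_T)^N}<\infty$: this does not follow from Definition \ref{d2.1} alone and should either be added as a hypothesis or noted as automatic in the intended application, where $(v_\varepsilon)$ is extracted from a bounded sequence via Theorem \ref{t2.2}.
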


\begin{remark}
\label{r5.1}\emph{The conclusion of the above results also hold true for the
mapping }$a_{0}$\emph{, mutatis mutandis.}
\end{remark}

Now, let 
\begin{eqnarray*}
V &=&\{u\in L^{2}(0,T;W_{0}^{1,2}(\Omega )):u^{\prime }\in
L^{2}(0,T;W^{-1,2}(\Omega ))\}\text{ and } \\
\mathbb{F}_{0}^{1} &=&V\times L^{2}(Q_{T};\mathcal{B}_{A_{\tau }}^{2}(%
\mathbb{R}_{\tau };\mathcal{B}_{A_{y}}^{1,2})).
\end{eqnarray*}%
Endowed with its natural topology, $\mathbb{F}_{0}^{1}$ is a Hilbert space
admitting 
\begin{equation*}
F_{0}^{\infty }=\mathcal{C}_{0}^{\infty }(Q_{T})\times \left( \mathcal{C}%
_{0}^{\infty }(Q_{T})\otimes \left[ \mathcal{D}_{A_{\tau }}(\mathbb{R}_{\tau
})\otimes \mathcal{D}_{A_{y}}(\mathbb{R}_{y}^{N})\right] \right)
\end{equation*}%
as a dense subspace.

Bearing this in mind, let $(u_{\varepsilon })_{\varepsilon >0}$ be a
sequence of solutions to (\ref{4.1}), and let $E=(\varepsilon _{n})_{n}$ be
an ordinary sequence of positive real numbers converging to zero. Since $%
(u_{\varepsilon })_{\varepsilon \in E}$ is relatively compact in $%
L^{2}(Q_{T})$, there exists a subsequence $E^{\prime }$ of $E$ and $u_{0}\in
L^{2}(Q_{T})$ such that, as $E^{\prime }\ni \varepsilon \rightarrow 0$, 
\begin{equation}
u_{\varepsilon }\rightarrow u_{0}\text{ in }L^{2}(Q_{T}).  \label{4.10}
\end{equation}%
In view of (\ref{4.5}) and by the diagonal process, we find a subsequence of 
$(u_{\varepsilon })_{\varepsilon \in E^{\prime }}$ still denoted by $%
(u_{\varepsilon })_{\varepsilon \in E^{\prime }}$ which weakly converges in $%
L^{2}(0,T;W_{0}^{1,2}(\Omega ))$ to $u_{0}$, hence $u_{0}\in
L^{2}(0,T;W_{0}^{1,2}(\Omega ))$. We infer from both Theorem \ref{t2.3} and
Remark \ref{r2.4} the existence of a function $u_{1}\in L^{2}(Q_{T};\mathcal{%
B}_{A_{\tau }}^{2}(\mathbb{R}_{\tau };\mathcal{B}_{A_{y}}^{1,2}))$ such
that, as $E^{\prime }\ni \varepsilon \rightarrow 0$, 
\begin{equation}
\frac{\partial u_{\varepsilon }}{\partial x_{j}}\rightarrow \frac{\partial
u_{0}}{\partial x_{j}}+\frac{\overline{\partial }u_{1}}{\partial y_{j}}\text{
in }L^{2}(Q_{T})\text{-weak }\Sigma \ \ (1\leq j\leq N).  \label{4.11}
\end{equation}

This being so, for $\boldsymbol{v}=(v_{0},v_{1})\in \mathbb{F}_{0}^{1}$ we
set $\overline{\mathbb{D}}_{y}\boldsymbol{v}=\nabla v_{0}+\overline{\nabla }%
_{y}v_{1}$ and $\mathbb{D}\boldsymbol{v}=\mathcal{G}_{1}\circ \overline{%
\mathbb{D}}_{y}\boldsymbol{v}\equiv \nabla v_{0}+\partial \widehat{v}_{1}$.
We consider the variational problem 
\begin{equation}
\left\{ 
\begin{array}{l}
\text{Find }\boldsymbol{u}=(u_{0},u_{1})\in \mathbb{F}_{0}^{1}\text{ such
that} \\ 
M(\rho )\int_{0}^{T}\left\langle u_{0}^{\prime }(t),v_{0}(t)\right\rangle
dt+\iint_{Q_{T}\times \Delta (A_{y})\times K(\Delta (A_{\tau }))}\widehat{a}%
(\cdot ,\cdot ,\mathbb{D}\boldsymbol{u})\cdot \mathbb{D}\boldsymbol{v}%
dxdtd\beta  \\ 
\ \ +\iint_{Q_{T}\times \Delta (A_{y})\times K(\Delta (A_{\tau }))}(\widehat{%
K}\widehat{\ast \ast }\widehat{a}_{0}(\cdot ,\cdot ,\mathbb{D}\boldsymbol{u}%
))v_{0}dxdtd\beta =\int_{Q_{T}}fv_{0}dxdt \\ 
\text{for all }\boldsymbol{v}=(v_{0},v_{1})\in \mathbb{F}_{0}^{1},%
\end{array}%
\right.   \label{4.12}
\end{equation}%
where the brackets $\left\langle \cdot ,\cdot \right\rangle $ henceforth
stand for the duality paring between $W_{0}^{1,2}(\Omega )$ and $%
W^{-1,2}(\Omega )$ and $u_{0}^{\prime }(t)=u_{0}^{\prime }(\cdot ,t)$ (same
definition for $v_{0}(t)$), and the convolution is with respect to the time
argument, that is, 
\begin{equation*}
(\widehat{K}\ast \ast \widehat{a}_{0}(\cdot ,\cdot ,\mathbb{D}\boldsymbol{u}%
))(x,t,s,s_{0})=\int_{\mathbb{R}_{\zeta }}\int_{K(\Delta (A_{\tau }))}%
\widehat{K}(s,s_{0}r_{0}^{-1})\widehat{a}_{0}(s,r_{0},\mathbb{D}\boldsymbol{u%
}(x,\zeta ,s,r_{0}))d\beta _{\tau }(r_{0})d\zeta .
\end{equation*}%
It can be shown that the above problem possesses at most one solution. The
following global homogenization result holds.

\begin{theorem}
\label{t4.2}Assume \emph{(\ref{4.8})} holds true. Then the couple $%
\boldsymbol{u}=(u_{0},u_{1})$ determined by \emph{(\ref{4.10})-(\ref{4.11})}
solves the variational problem \emph{(\ref{4.12})}.
\end{theorem}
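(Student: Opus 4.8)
The plan is to pass to the $\Sigma$-limit in the weak formulation of \eqref{4.1} using the two-scale gradient decomposition \eqref{4.11}, and then to identify the nonlinear flux limits by a Minty--Browder monotonicity argument. I would start from the variational form of \eqref{4.1}, test with $\Phi_{\varepsilon}=\psi_{0}+\varepsilon\psi_{1}^{\varepsilon}$ for $\boldsymbol{v}=(\psi_{0},\psi_{1})\in F_{0}^{\infty}$, noting that $D\Phi_{\varepsilon}\to\overline{\mathbb{D}}_{y}\boldsymbol{v}=\nabla\psi_{0}+\overline{\nabla}_{y}\psi_{1}$ strongly $\Sigma$ (the $\varepsilon(\nabla_{x}\psi_{1})^{\varepsilon}$ contribution being negligible). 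By the growth bound in \textbf{A1} and the estimate \eqref{4.5}, the fluxes $a^{\varepsilon}(\cdot,\cdot,\nabla u_{\varepsilon})$ and $a_{0}^{\varepsilon}(\cdot,\cdot,\nabla u_{\varepsilon})$ are bounded in $L^{2}(Q_{T})^{N}$ and $L^{2}(Q_{T})$, so along a subsequence they weakly $\Sigma$-converge to limits $\chi$ and $\chi_{0}$. Passing to the limit termwise: the evolution term is integrated by parts in $t$ (boundary terms vanish by compact support), and with $\rho^{\varepsilon}\rightharpoonup M(\rho)$, the strong convergence \eqref{4.10} (see Remark \ref{r4.1}) and $\int_{\Delta(A_{\tau})}\partial_{\tau}\widehat{\psi}_{1}\,d\beta_{\tau}=0$ (Proposition \ref{p2.2}) it tends to $M(\rho)\int_{0}^{T}\langle u_{0}',\psi_{0}\rangle\,dt$; the diffusion term tends to $\iint\widehat{\chi}\cdot\mathbb{D}\boldsymbol{v}\,d\beta$; and the nonlocal term is handled by Theorem \ref{t2.4}, with $K^{\varepsilon}\to K$ strongly $\Sigma$ and $a_{0}^{\varepsilon}\to\chi_{0}$ weakly $\Sigma$, giving $\iint(\widehat{K}\,\widehat{\ast\ast}\,\widehat{\chi}_{0})\psi_{0}\,d\beta$. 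This produces a limit equation identical to \eqref{4.12} but with $\chi,\chi_{0}$ in place of $a,a_{0}$, valid for all $\boldsymbol{v}\in F_{0}^{\infty}$ and, by density of $F_{0}^{\infty}$ in $\mathbb{F}_{0}^{1}$, for all $\boldsymbol{v}\in\mathbb{F}_{0}^{1}$.

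The main obstacle is to identify $\chi=a(\cdot,\cdot,\mathbb{D}\boldsymbol{u})$ and $\chi_{0}=a_{0}(\cdot,\cdot,\mathbb{D}\boldsymbol{u})$, and the decisive ingredient is the energy inequality
\[ \limsup_{\varepsilon\to0}\int_{Q_{T}}a^{\varepsilon}(\cdot,\cdot,\nabla u_{\varepsilon})\cdot\nabla u_{\varepsilon}\,dxdt\le\iint_{Q_{T}\times\Delta(A_{y})\times K(\Delta(A_{\tau}))}\widehat{\chi}\cdot\mathbb{D}\boldsymbol{u}\,d\beta. \]
To obtain it I would test \eqref{4.1} with $\phi=u_{\varepsilon}$, expressing $\int a^{\varepsilon}\cdot\nabla u_{\varepsilon}$ through $\int f u_{\varepsilon}$, the term $\int_{0}^{T}\langle\rho^{\varepsilon}u_{\varepsilon}',u_{\varepsilon}\rangle\,dt=\tfrac12\int_{\Omega}\rho^{\varepsilon}|u_{\varepsilon}(T)|^{2}\,dx-\tfrac12\int_{\Omega}\rho^{\varepsilon}|u^{0}|^{2}\,dx$, and the nonlocal term. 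As $\varepsilon\to0$ one has $\int f u_{\varepsilon}\to\int f u_{0}$ by \eqref{4.10}; the quadratic boundary term is weakly lower semicontinuous, so $\liminf\tfrac12\int_{\Omega}\rho^{\varepsilon}|u_{\varepsilon}(T)|^{2}\ge\tfrac12 M(\rho)\int_{\Omega}|u_{0}(T)|^{2}$ while $\tfrac12\int_{\Omega}\rho^{\varepsilon}|u^{0}|^{2}\to\tfrac12 M(\rho)\int_{\Omega}|u^{0}|^{2}$; and $\int(K^{\varepsilon}\ast a_{0}^{\varepsilon})u_{\varepsilon}\to\iint(\widehat{K}\,\widehat{\ast\ast}\,\widehat{\chi}_{0})u_{0}\,d\beta$ again by Theorem \ref{t2.4} and \eqref{4.10}. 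Comparing the resulting $\limsup$ bound with the limit equation tested against $\boldsymbol{u}=(u_{0},u_{1})$ itself (legitimate once one checks $u_{0}'\in L^{2}(0,T;W^{-1,2}(\Omega))$, so that $\boldsymbol{u}\in\mathbb{F}_{0}^{1}$) then yields the energy inequality.

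With the energy inequality secured I would run the monotonicity argument. For $\boldsymbol{w}\in F_{0}^{\infty}$, writing $\Phi_{\varepsilon}^{w}=w_{0}+\varepsilon w_{1}^{\varepsilon}$, assumption \textbf{A1}(i) gives
\[ c_{1}\int_{Q_{T}}|\nabla u_{\varepsilon}-D\Phi_{\varepsilon}^{w}|^{2}\,dxdt\le\int_{Q_{T}}\bigl(a^{\varepsilon}(\cdot,\cdot,\nabla u_{\varepsilon})-a^{\varepsilon}(\cdot,\cdot,D\Phi_{\varepsilon}^{w})\bigr)\cdot(\nabla u_{\varepsilon}-D\Phi_{\varepsilon}^{w})\,dxdt. \]
Expanding the right-hand side and letting $\varepsilon\to0$, using the energy inequality on the $a^{\varepsilon}\cdot\nabla u_{\varepsilon}$ term, weak $\Sigma$-convergence on the cross terms, and $a^{\varepsilon}(\cdot,\cdot,D\Phi_{\varepsilon}^{w})\to a(\cdot,\cdot,\overline{\mathbb{D}}_{y}\boldsymbol{w})$ strongly $\Sigma$ (Proposition \ref{p4.1} and Corollary \ref{c4.1}), then choosing $\boldsymbol{w}$ to approximate $\boldsymbol{u}$ in $\mathbb{F}_{0}^{1}$, forces $\nabla u_{\varepsilon}\to\mathbb{D}\boldsymbol{u}$ \emph{strongly} $\Sigma$. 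Strong convergence together with the Lipschitz continuity in \textbf{A1}(iii) then identifies both fluxes, $\chi=a(\cdot,\cdot,\mathbb{D}\boldsymbol{u})$ and, since $a_{0}$ need not be monotone, $\chi_{0}=a_{0}(\cdot,\cdot,\mathbb{D}\boldsymbol{u})$. Substituting these into the limit equation of the first step gives exactly \eqref{4.12}, first for $\boldsymbol{v}\in F_{0}^{\infty}$ and then, by density, for all $\boldsymbol{v}\in\mathbb{F}_{0}^{1}$, which is the assertion.

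I expect the delicate point to be precisely this identification step: unlike the classical local case, the monotone-operator limit passage must be carried out simultaneously with the limit of the nonlocal convolution term, and it is the strong $\Sigma$-convergence of the gradients --- driven by \textbf{A1}(i) and the energy inequality --- that lets the non-monotone factor $a_{0}$ inside the convolution pass to the limit. The whole scheme therefore hinges on Theorem \ref{t2.4} (used both in the limit equation and in the energy inequality) and on the lower-semicontinuity estimate for the evolution term rather than a plain passage to the limit.
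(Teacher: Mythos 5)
Your proposal is correct and rests on the same pillars as the paper's proof: the a priori bounds (\ref{4.5}) and weak $\Sigma$-compactness of the fluxes, Corollary \ref{c4.1} for the oscillating test functions $\Phi_{\varepsilon}=\psi_{0}+\varepsilon\psi_{1}^{\varepsilon}$, Theorem \ref{t2.4} for the nonlocal term, weak lower semicontinuity of $\int_{\Omega}\rho^{\varepsilon}\left\vert u_{\varepsilon}(T)\right\vert^{2}dx$, and the strong monotonicity constant $c_{1}$ combined with the Lipschitz bound in \textbf{A1}(iii) to handle the non-monotone flux $a_{0}$. The organization is genuinely different, though. The paper never isolates an energy inequality nor proves strong $\Sigma$-convergence of $\nabla u_{\varepsilon}$: it substitutes the $\varepsilon$-level energy identity into the monotonicity inequality \emph{before} passing to the limit, obtains the variational inequality (\ref{4.14}) in the test function $\Phi$, extracts the limit equation by the perturbation $\Phi=\boldsymbol{u}-\lambda\boldsymbol{v}$, $\lambda\downarrow 0$, and only at the very end identifies the weak $\Sigma$-limit $z_{0}$ of $a_{0}^{\varepsilon}(\cdot,\cdot,\nabla u_{\varepsilon})$ --- and then only under the convolution, i.e.\ it proves $K\ast\ast z_{0}=K\ast\ast a_{0}(\cdot,\cdot,\overline{\mathbb{D}}_{y}\boldsymbol{u})$. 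Your route first writes the limit equation with unidentified fluxes $\chi,\chi_{0}$, then derives the energy inequality by comparing the $\varepsilon$-equation tested with $u_{\varepsilon}$ against the limit equation tested with $\boldsymbol{u}$ itself, then upgrades to strong $\Sigma$-convergence of the gradients and identifies both fluxes at once. What this buys is a cleaner and slightly stronger conclusion (strong convergence of $\nabla u_{\varepsilon}$, identification of $\chi_{0}$ itself rather than only of $K\ast\ast\chi_{0}$); what it costs is an extra justification you mention only in passing: testing the limit equation with $\boldsymbol{u}$ requires not just $u_{0}^{\prime}\in L^{2}(0,T;W^{-1,2}(\Omega))$ but also attainment of the initial datum $u_{0}(0)=u^{0}$, so that $\int_{0}^{T}\left\langle u_{0}^{\prime},u_{0}\right\rangle dt=\frac{1}{2}\left\Vert u_{0}(T)\right\Vert_{L^{2}}^{2}-\frac{1}{2}\left\Vert u^{0}\right\Vert_{L^{2}}^{2}$, together with $u_{\varepsilon}(T)\rightharpoonup u_{0}(T)$ in $L^{2}(\Omega)$. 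Since your first-step test functions vanish at $t=0$ and $t=T$, the limit equation does not by itself encode the initial condition, so this point (standard, but not free) should be recorded explicitly.
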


\begin{proof}
Let $(u_{0},u_{1})$ be as in (\ref{4.10})-(\ref{4.11}). Then it belongs to $%
L^{2}(0,T;W_{0}^{1,2}(\Omega ))\times L^{2}(Q_{T};\mathcal{B}_{A_{\tau
}}^{2}(\mathbb{R}_{\tau };\mathcal{B}_{A_{y}}^{1,2}))$. It remains to check
that $u_{0}^{\prime }\in L^{2}(0,T;W^{-1,2}(\Omega ))$ and that $\boldsymbol{%
u}$ solves (\ref{4.12}). First, let $\psi \in \mathcal{C}_{0}^{\infty
}(Q_{T})$. The variational formulation of (\ref{4.1}) with $\psi $ as a test
function yields 
\begin{eqnarray*}
&&-\int_{Q_{T}}\rho ^{\varepsilon }u_{\varepsilon }\frac{\partial \psi }{%
\partial t}dxdt+\int_{Q_{T}}a^{\varepsilon }(\cdot ,\cdot ,\nabla
u_{\varepsilon })\cdot \nabla \psi dxdt+\int_{Q_{T}}(K^{\varepsilon }\ast
a_{0}^{\varepsilon }(\cdot ,\cdot ,\nabla u_{\varepsilon }))\psi dxdt \\
&=&\int_{Q_{T}}f\psi dxdt.
\end{eqnarray*}%
Thanks to the second estimate in (\ref{4.5}), the sequences $a^{\varepsilon
}(\cdot ,\cdot ,\nabla u_{\varepsilon })$ and $(K^{\varepsilon }\ast
a_{0}^{\varepsilon }(\cdot ,\cdot ,\nabla u_{\varepsilon }))$ are bounded in 
$L^{2}(Q_{T})^{N}$ and in $L^{2}(Q_{T})$ respectively. Hence there exists a
subsequence of $E^{\prime }$ ($E^{\prime }$ determined in (\ref{4.11})) not
relabeled and two functions $z\in L^{2}(Q_{T})^{N}$ and $z_{0}\in
L^{2}(Q_{T})$ such that, as $E^{\prime }\ni \varepsilon \rightarrow 0$, $%
a^{\varepsilon }(\cdot ,\cdot ,\nabla u_{\varepsilon })\rightarrow z$ in $%
L^{2}(Q_{T})^{N}$-weak and $K^{\varepsilon }\ast a_{0}^{\varepsilon }(\cdot
,\cdot ,\nabla u_{\varepsilon })\rightarrow z_{0}$ in $L^{2}(Q_{T})$-weak.
Letting $E^{\prime }\ni \varepsilon \rightarrow 0$ in the above equation and
using (\ref{4.10}), we obtain 
\begin{eqnarray*}
&&-M(\rho )\int_{Q_{T}}u_{0}\frac{\partial \psi }{\partial t}%
dxdt+\int_{Q_{T}}z\cdot \nabla \psi dxdt+\int_{Q_{T}}z_{0}\psi dxdt \\
&=&\int_{Q_{T}}f\psi dxdt.
\end{eqnarray*}%
It follows that 
\begin{equation*}
M(\rho )\frac{\partial u_{0}}{\partial t}=\Div z-z_{0}+f.
\end{equation*}%
Since $M(\rho )\neq 0$, it follows by mere routine that $\frac{\partial u_{0}%
}{\partial t}\in L^{2}(0,T;W^{-1,2}(\Omega ))$, so that $u_{0}\in V$. This
shows that $\boldsymbol{u}=(u_{0},u_{1})\in \mathbb{F}_{0}^{1}$. Let us now
check that $\boldsymbol{u}$ satisfies (\ref{4.12}). To achieve this, let $%
\Phi =(\psi _{0},\varrho (\psi _{1}))\in F_{0}^{\infty }$ where $\psi
_{1}\in \mathcal{C}_{0}^{\infty }(Q_{T})\otimes A_{y}^{\infty }\otimes
A_{\tau }^{\infty }$ ($\varrho $ being denoting the canonical mapping of $%
B_{A}^{2}$ into $\mathcal{B}_{A}^{2}$), and define $\Phi _{\varepsilon }$ as
in (\ref{4.9}) (see Corollary \ref{c4.1}). Then we have $\Phi _{\varepsilon
}\in \mathcal{C}_{0}^{\infty }(Q_{T})$, and by the monotonicity of $a$, we
have 
\begin{equation*}
\int_{Q_{T}}\left( a^{\varepsilon }(\cdot ,\cdot ,\nabla u_{\varepsilon
})-a^{\varepsilon }(\cdot ,\cdot ,\nabla \Phi _{\varepsilon })\right) \cdot
(\nabla u_{\varepsilon }-\nabla \Phi _{\varepsilon })dxdt\geq 0,
\end{equation*}%
or, owing to (\ref{4.1}), 
\begin{equation}
\begin{array}{l}
\frac{1}{2}\int_{\Omega }\rho ^{\varepsilon }\left\vert u_{\varepsilon
}(T)\right\vert ^{2}dx\leq \frac{1}{2}\int_{\Omega }\rho ^{\varepsilon
}\left\vert u^{0}\right\vert ^{2}dx+\int_{Q_{T}}f(u_{\varepsilon }-\Phi
_{\varepsilon })dxdt \\ 
\ \ \ -\int_{Q_{T}}\rho ^{\varepsilon }u_{\varepsilon }\frac{\partial \Phi
_{\varepsilon }}{\partial t}dxdt+\int_{Q_{T}}(K^{\varepsilon }\ast
a_{0}^{\varepsilon }(\cdot ,\cdot ,\nabla u_{\varepsilon }))(u_{\varepsilon
}-\Phi _{\varepsilon })dxdt \\ 
\ \ \ \ \ -\int_{Q_{T}}a^{\varepsilon }(\cdot ,\cdot ,\nabla \Phi
_{\varepsilon })\cdot \nabla (u_{\varepsilon }-\Phi _{\varepsilon })dxdt.%
\end{array}
\label{4.13}
\end{equation}%
We pass to the limit in (\ref{4.13}) by considering each term separately.
First, in view of (\ref{4.10}), it is an easy exercise to see that 
\begin{equation*}
\iint_{\Omega \times \Delta (A_{y})}\widehat{\rho }(s)\left\vert
u_{0}(T)\right\vert ^{2}dxd\beta _{y}\leq ~\underset{E^{\prime }\ni
\varepsilon \rightarrow 0}{\lim \inf }\int_{\Omega }\rho ^{\varepsilon
}\left\vert u_{\varepsilon }(T)\right\vert ^{2}dx,
\end{equation*}%
that is, 
\begin{equation*}
M(\rho )\int_{\Omega }\left\vert u_{0}(T)\right\vert ^{2}dx\leq ~\underset{%
E^{\prime }\ni \varepsilon \rightarrow 0}{\lim \inf }\int_{\Omega }\rho
^{\varepsilon }\left\vert u_{\varepsilon }(T)\right\vert ^{2}dx.
\end{equation*}%
Next, we have from the definition of the mean value that 
\begin{equation*}
\int_{\Omega }\rho ^{\varepsilon }\left\vert u^{0}\right\vert
^{2}dx\rightarrow M(\rho )\int_{\Omega }\left\vert u^{0}\right\vert ^{2}dx%
\text{ when }E^{\prime }\ni \varepsilon \rightarrow 0.
\end{equation*}%
Considering the next term, we obviously have, as $E^{\prime }\ni \varepsilon
\rightarrow 0$, 
\begin{equation*}
\int_{Q_{T}}f(u_{\varepsilon }-\Phi _{\varepsilon })dxdt\rightarrow
\int_{Q_{T}}f(u_{0}-\psi _{0})dxdt.
\end{equation*}%
In view of (\ref{4.10}) associated to the convergence result $\left( \frac{%
\partial \psi _{1}}{\partial \tau }\right) ^{\varepsilon }\rightarrow
M\left( \frac{\partial \psi _{1}}{\partial \tau }\right) =0$ in $L^{2}(Q_{T})
$-weak, it holds that 
\begin{equation*}
\int_{Q_{T}}\rho ^{\varepsilon }u_{\varepsilon }\frac{\partial \Phi
_{\varepsilon }}{\partial t}dxdt\rightarrow M(\rho )\int_{Q_{T}}u_{0}\frac{%
\partial \psi _{0}}{\partial t}dxdt=M(\rho )\int_{0}^{T}\left\langle
u_{0}^{\prime }(t),\psi _{0}(t)\right\rangle dt.
\end{equation*}%
Now, because of Corollary \ref{c4.1} we obtain, when $E^{\prime }\ni
\varepsilon \rightarrow 0$, 
\begin{equation*}
\int_{Q_{T}}a^{\varepsilon }(\cdot ,\cdot ,\nabla \Phi _{\varepsilon })\cdot
\nabla (u_{\varepsilon }-\Phi _{\varepsilon })dxdt\rightarrow
\iint_{Q_{T}\times \Delta (A_{y})\times K(\Delta (A_{\tau }))}\widehat{a}%
(\cdot ,\cdot ,\mathbb{D}\Phi )\cdot \mathbb{D}(\boldsymbol{u}-\Phi
)dxdtd\beta .
\end{equation*}%
Finally, for the last term, due to the estimate (\ref{4.5}), we have that
the sequence $(a_{0}^{\varepsilon }(\cdot ,\cdot ,\nabla u_{\varepsilon
}))_{\varepsilon \in E^{\prime }}$ is bounded in $L^{2}(Q_{T})$ so that
there exist a function $z_{0}\in L^{2}(Q_{T};\mathcal{B}_{A}^{2})$ such
that, up to a subsequence of $E^{\prime }$, $a_{0}^{\varepsilon }(\cdot
,\cdot ,\nabla u_{\varepsilon })\rightarrow z_{0}$ in $L^{2}(Q_{T})$-weak $%
\Sigma $. On the other hand, since $K\in B_{A}^{1}$ we have that $%
K^{\varepsilon }\rightarrow K$ in $L^{1}(\mathbb{R}^{N+1})$-strong $\Sigma $%
. It there emerges from Theorem \ref{t2.4} that 
\begin{equation*}
K^{\varepsilon }\ast a_{0}^{\varepsilon }(\cdot ,\cdot ,\nabla
u_{\varepsilon })\rightarrow K\ast \ast z_{0}\text{ in }L^{2}(Q_{T})\text{%
-weak }\Sigma \text{ as }E^{\prime }\ni \varepsilon \rightarrow 0.
\end{equation*}%
We use once again (\ref{4.10}) to get, when $E^{\prime }\ni \varepsilon
\rightarrow 0$, 
\begin{equation*}
\int_{Q_{T}}(K^{\varepsilon }\ast a_{0}^{\varepsilon }(\cdot ,\cdot ,\nabla
u_{\varepsilon }))(u_{\varepsilon }-\Phi _{\varepsilon })dxdt\rightarrow
\iint_{Q_{T}\times \Delta (A_{y})\times K(\Delta (A_{\tau }))}(\widehat{K}%
\widehat{\ast \ast }\widehat{z}_{0})(u_{0}-\psi _{0})dxdtd\beta .
\end{equation*}%
Putting together all the above convergence results and taking the $\lim
\inf_{E^{\prime }\ni \varepsilon \rightarrow 0}$ in (\ref{4.13}), we end up
with 
\begin{equation}
\begin{array}{l}
0\leq \int_{0}^{T}\left\langle f(t)-M(\rho )u_{0}^{\prime }(t),u_{0}(t)-\psi
_{0}(t)\right\rangle dt \\ 
\ \ -\iint_{Q_{T}\times \Delta (A_{y})\times K(\Delta (A_{\tau }))}\widehat{a%
}(\cdot ,\cdot ,\mathbb{D}\Phi )\cdot \mathbb{D}(\mathbf{u}-\Phi )dxdtd\beta 
\\ 
\ \ \ -\iint_{Q_{T}\times \Delta (A_{y})\times K(\Delta (A_{\tau }))}(%
\widehat{K}\widehat{\ast \ast }\widehat{z}_{0})(u_{0}-\psi _{0})dxdtd\beta 
\text{ for all }\Phi \in F_{0}^{\infty }.%
\end{array}
\label{4.14}
\end{equation}%
Since $F_{0}^{\infty }$ is dense in $\mathbb{F}_{0}^{1}$, after considering
a continuity argument, we see that (\ref{4.14}) still holds for any $\Phi
\in \mathbb{F}_{0}^{1}$. Taking in (\ref{4.14}) the particular functions $%
\Phi =\boldsymbol{u}-\lambda \boldsymbol{v}$ with $\lambda >0$ and $%
\boldsymbol{v}=(v_{0},v_{1})\in \mathbb{F}_{0}^{1}$, then dividing both
sides of the resulting inequality by $\lambda $, and letting $\lambda
\rightarrow 0$, and finally changing $\boldsymbol{v}$ into $-\boldsymbol{v}$%
, leads to 
\begin{equation*}
\begin{array}{l}
M(\rho )\int_{0}^{T}\left\langle u_{0}^{\prime }(t),v_{0}(t)\right\rangle
dt+\iint_{Q_{T}\times \Delta (A_{y})\times K(\Delta (A_{\tau }))}\widehat{a}%
(\cdot ,\cdot ,\mathbb{D}\boldsymbol{u})\cdot \mathbb{D}\boldsymbol{v}%
dxdtd\beta  \\ 
\;+\iint_{Q_{T}\times \Delta (A_{y})\times K(\Delta (A_{\tau }))}(\widehat{K}%
\widehat{\ast \ast }\widehat{z}_{0})v_{0}dxdtd\beta
=\int_{0}^{T}\left\langle f(t),v_{0}(t)\right\rangle dt \\ 
\text{for all }\boldsymbol{v}=(v_{0},v_{1})\in \mathbb{F}_{0}^{1}.%
\end{array}%
\end{equation*}%
The last part of the proof consists in identifying the function $z_{0}$. It
is sufficient to check that 
\begin{equation*}
K\ast \ast z_{0}=K\ast \ast a_{0}(\cdot ,\cdot ,\overline{\mathbb{D}}_{y}%
\boldsymbol{u}).
\end{equation*}%
To this end, fix $\eta >0$ and choose $\psi _{0}\in \mathcal{C}_{0}^{\infty
}(Q_{T})$ and $\psi _{1}\in \mathcal{C}_{0}^{\infty }(Q_{T})\otimes
A^{\infty }$ be such that 
\begin{equation*}
\left\Vert u_{0}-\psi _{0}\right\Vert _{L^{2}(0,T;W_{0}^{1,2}(\Omega ))}<%
\frac{\eta }{4c_{0}\left\Vert K\right\Vert _{B_{A}^{1}}}\text{ and }%
\left\Vert u_{1}-\varrho (\psi _{1})\right\Vert _{L^{2}(Q_{T};\mathcal{B}%
_{A_{\tau }}^{2}(\mathbb{R}_{\tau };\mathcal{B}_{A_{y}}^{1,2}))}<\frac{\eta 
}{4c_{0}\left\Vert K\right\Vert _{B_{A}^{1}}}.
\end{equation*}%
Then letting $\Phi =(\psi _{0},\varrho (\psi _{1}))\in F_{0}^{\infty }$, it
holds that 
\begin{equation*}
\begin{array}{l}
\left\Vert K\ast \ast \left( a_{0}(\cdot ,\cdot ,\overline{\mathbb{D}}_{y}%
\boldsymbol{u})-z_{0}\right) \right\Vert _{L^{2}(Q_{T};\mathcal{B}_{A}^{2})}
\\ 
\leq \left\Vert K\ast \ast \left( a_{0}(\cdot ,\cdot ,\overline{\mathbb{D}}%
_{y}\boldsymbol{u})-a_{0}(\cdot ,\cdot ,\nabla u_{0}+\nabla _{y}\psi
_{1})\right) \right\Vert _{L^{2}(Q_{T};\mathcal{B}_{A}^{2})} \\ 
\ \ \ \ \ +\left\Vert K\ast \ast \left( a_{0}(\cdot ,\cdot ,\nabla
u_{0}+\nabla _{y}\psi _{1})-a_{0}(\cdot ,\cdot ,\overline{\mathbb{D}}%
_{y}\Phi )\right) \right\Vert _{L^{2}(Q_{T};\mathcal{B}_{A}^{2})} \\ 
\ \ \ \ \ \ \ \ \ \ +\left\Vert K\ast \ast \left( a_{0}(\cdot ,\cdot ,%
\overline{\mathbb{D}}_{y}\Phi )-z_{0}\right) \right\Vert _{L^{2}(Q_{T};%
\mathcal{B}_{A}^{2})} \\ 
\leq \frac{\eta }{2}+\underset{E^{\prime }\ni \varepsilon \rightarrow 0}{%
\lim \inf }\left\Vert K^{\varepsilon }\ast (a_{0}^{\varepsilon }(\cdot
,\cdot ,\nabla \Phi _{\varepsilon })-a_{0}^{\varepsilon }(\cdot ,\cdot
,\nabla u_{\varepsilon }))\right\Vert _{L^{2}(Q_{T})}.%
\end{array}%
\end{equation*}%
But 
\begin{eqnarray*}
&&\left\Vert K^{\varepsilon }\ast (a_{0}^{\varepsilon }(\cdot ,\cdot ,\nabla
\Phi _{\varepsilon })-a_{0}^{\varepsilon }(\cdot ,\cdot ,\nabla
u_{\varepsilon }))\right\Vert _{L^{2}(Q_{T})} \\
&\leq &\left\Vert K^{\varepsilon }\right\Vert _{L^{1}(Q_{T})}\left\Vert
a_{0}^{\varepsilon }(\cdot ,\cdot ,\nabla \Phi _{\varepsilon
})-a_{0}^{\varepsilon }(\cdot ,\cdot ,\nabla u_{\varepsilon })\right\Vert
_{L^{2}(Q_{T})} \\
&\leq &C\left\Vert \nabla \Phi _{\varepsilon })-\nabla u_{\varepsilon
}\right\Vert _{L^{2}(Q_{T})}
\end{eqnarray*}%
where $C$ is a positive constant not depending on $\varepsilon $. On the
other hand, by part (i) of assumption (\ref{4.4}), we have 
\begin{equation*}
c_{0}\left\Vert \nabla \Phi _{\varepsilon })-\nabla u_{\varepsilon
}\right\Vert _{L^{2}(Q_{T})}^{2}\leq \int_{Q_{T}}\left( a^{\varepsilon
}(\cdot ,\cdot ,\nabla u_{\varepsilon })-a^{\varepsilon }(\cdot ,\cdot
,\nabla \Phi _{\varepsilon })\right) \cdot (\nabla u_{\varepsilon }-\nabla
\Phi _{\varepsilon })dxdt.
\end{equation*}%
Therefore, proceeding exactly as in the proof of Theorem 3.10 in \cite{ACAP}
(see also the proof of Theorem 4.1 in \cite{AMPA}), we are readily led to $%
K\ast \ast z_{0}=K\ast \ast a_{0}(\cdot ,\cdot ,\overline{\mathbb{D}}_{y}%
\boldsymbol{u})$. This concludes the proof of the theorem.
\end{proof}

\bigskip The \emph{global homogenized} problem (\ref{4.12}) is equivalent to
the following system 
\begin{equation}
\left\{ 
\begin{array}{l}
M(\rho )\int_{0}^{T}\left\langle u_{0}^{\prime }(t),v_{0}(t)\right\rangle
dt+\iint_{Q_{T}\times \Delta (A_{y})\times K(\Delta (A_{\tau }))}\widehat{a}%
(\cdot ,\cdot ,\mathbb{D}\boldsymbol{u})\cdot \nabla v_{0}dxdtd\beta \\ 
\ \ +\iint_{Q_{T}\times \Delta (A_{y})\times K(\Delta (A_{\tau }))}(\widehat{%
K}\ast \ast \widehat{a}_{0}(\cdot ,\cdot ,\mathbb{D}\boldsymbol{u}%
))v_{0}dxdtd\beta =\int_{Q_{T}}fv_{0}dxdt \\ 
\text{for all }v_{0}\in L^{2}(0,T;W_{0}^{1,2}(\Omega ))%
\end{array}%
\right.  \label{4.15}
\end{equation}%
and\ 
\begin{equation}
\iint_{Q_{T}\times \Delta (A_{y})\times K(\Delta (A_{\tau }))}\widehat{a}%
(\cdot ,\cdot ,\mathbb{D}\boldsymbol{u})\cdot \partial \widehat{v}%
_{1}dxdtd\beta =0\text{, all }v_{1}\in L^{2}(Q_{T};\mathcal{B}_{A_{\tau
}}^{2}(\mathbb{R}_{\tau };\mathcal{B}_{A_{y}}^{1,2})).  \label{4.16}
\end{equation}

Let us first deal with (\ref{4.16}). For that, let $\lambda \in \mathbb{R}%
^{N}$, and consider the following variational 
\begin{equation}
\left\{ 
\begin{array}{l}
\text{Find }v(\lambda )\in \mathcal{B}_{A_{\tau }}^{2}(\mathbb{R}_{\tau };%
\mathcal{B}_{A_{y}}^{1,2}): \\ 
\int_{\Delta (A_{y})\times K(\Delta (A_{\tau }))}\widehat{a}(\cdot ,\cdot
,\lambda +\partial \widehat{v(\lambda )})\cdot \partial \widehat{w}d\beta =0%
\text{ for all }w\in \mathcal{B}_{A_{\tau }}^{2}(\mathbb{R}_{\tau };\mathcal{%
B}_{A_{y}}^{1,2}).%
\end{array}%
\right.  \label{4.17}
\end{equation}%
Owing to the properties of the function $a$ (see Proposition \ref{p4.1}), it
holds that (\ref{4.17}) possesses a solution in $\mathcal{B}_{A_{\tau }}^{2}(%
\mathbb{R}_{\tau };\mathcal{B}_{A_{y}}^{1,2})$ which is unique in the space $%
\mathcal{B}_{A_{\tau }}^{2}(\mathbb{R}_{\tau };\mathcal{B}%
_{A_{y}}^{1,2}/I_{A}^{2})$. Now, taking $\lambda =\nabla u_{0}(x,t)$ with $%
(x,t)$ arbitrarily fixed in $Q_{T}$, and then choosing in (\ref{4.16}) the
particular test functions $v_{1}(x,t)=\varphi (x,t)w$ ($(x,t)\in Q_{T}$)
with $\varphi \in \mathcal{C}_{0}^{\infty }(Q_{T})$ and $w\in \mathcal{B}%
_{A_{\tau }}^{2}(\mathbb{R}_{\tau };\mathcal{B}_{A_{y}}^{1,2})$, and finally
comparing the resulting equation with (\ref{4.17}), it follows (by the
uniqueness argument) that $u_{1}=v(\nabla u_{0})$, where the right-hand side
of this equality stands for the function $(x,t)\mapsto v(\nabla u_{0}(x,t))$
from $Q_{T}$ into $\mathcal{B}_{A_{\tau }}^{2}(\mathbb{R}_{\tau };\mathcal{B}%
_{A_{y}}^{1,2}/I_{A}^{2})$.

We can now deal with (\ref{4.15}). To this end, we define the homogenized
coefficients as follows: For $\lambda \in \mathbb{R}^{N}$, 
\begin{eqnarray*}
b(\lambda ) &=&\int_{\Delta (A_{y})\times K(\Delta (A_{\tau }))}\widehat{a}%
(\cdot ,\cdot ,\lambda +\partial \widehat{v(\lambda )})d\beta ; \\
b_{0}(\lambda ) &=&\int_{\Delta (A_{y})\times K(\Delta (A_{\tau }))}(%
\widehat{K}\ast \widehat{a}_{0}(\cdot ,\cdot ,\lambda +\partial \widehat{%
v(\lambda )}))d\beta ; \\
\widetilde{\rho } &=&M(\rho ).
\end{eqnarray*}%
Then substituting $u_{1}=v(\nabla u_{0})$ in (\ref{4.15}) and choosing there
the special test function $v_{0}=\varphi \in \mathcal{C}_{0}^{\infty }(Q_{T})
$, we quickly obtain by disintegration, the macroscopic homogenized problem,
viz.,%
\begin{equation}
\left\{ 
\begin{array}{l}
\widetilde{\rho }\frac{\partial u_{0}}{\partial t}-\Div b(\nabla
u_{0})+b_{0}(\nabla u_{0})=f\text{ in }Q_{T} \\ 
u_{0}(0)=u^{0}\text{ in }\Omega .%
\end{array}%
\right.   \label{4.18}
\end{equation}%
By the uniqueness of the solution to (\ref{4.12}), the existence and the
uniqueness of the solution to (\ref{4.18}) is ensured. We are therefore led
to the following

\begin{theorem}
\label{t4.3}Assume that \emph{(\ref{4.8})} holds. For each $\varepsilon >0$
let $u_{\varepsilon }$ be the unique solution to \emph{(\ref{4.1})}. Then as 
$\varepsilon \rightarrow 0$, 
\begin{equation*}
u_{\varepsilon }\rightarrow u_{0}\text{\ in }L^{2}(Q_{T})
\end{equation*}%
where $u_{0}$ is the unique solution to \emph{(\ref{4.18})}.
\end{theorem}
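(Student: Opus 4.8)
The plan is to assemble the compactness and global homogenization facts already established and then promote subsequential convergence to convergence of the whole family by a uniqueness argument. First I would fix an arbitrary ordinary sequence $E=(\varepsilon _n)_n$ of positive reals tending to zero. By Remark \ref{r4.1} the family $(u_\varepsilon )_{\varepsilon \in E}$ is relatively compact in $L^2(Q_T)$, so there is a subsequence $E'$ of $E$ and a function $u_0\in L^2(Q_T)$ with $u_\varepsilon \to u_0$ in $L^2(Q_T)$ as $E'\ni\varepsilon \to 0$; this is precisely (\ref{4.10}). Combining the a priori bound (\ref{4.5}) with Theorem \ref{t2.3} in its evolutionary form (Remark \ref{r2.4}), a further extraction produces the corrector $u_1$ satisfying (\ref{4.11}), and in particular $u_0\in L^2(0,T;W_0^{1,2}(\Omega ))$.

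Next I would invoke Theorem \ref{t4.2}: the couple $\boldsymbol{u}=(u_0,u_1)$ determined by (\ref{4.10})--(\ref{4.11}) solves the global variational problem (\ref{4.12}), equivalently the decoupled system (\ref{4.15})--(\ref{4.16}). Arguing exactly as in the discussion following (\ref{4.16}), the corrector problem (\ref{4.17}) together with its uniqueness modulo $I_A^2$ forces $u_1=v(\nabla u_0)$; substituting this identity into (\ref{4.15}) and testing with $v_0=\varphi \in \mathcal{C}_0^\infty (Q_T)$ yields, after disintegration, that $u_0$ is a solution of the macroscopic homogenized problem (\ref{4.18}) with the homogenized data $b$, $b_0$ and $\widetilde{\rho }=M(\rho )$ defined above.

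Finally, the decisive step is to remove the dependence on the chosen subsequence. Since (\ref{4.12}) admits at most one solution, its equivalent reduction (\ref{4.18}) is uniquely solvable, so the limit $u_0$ is independent of $E'$. Hence every subsequence of the original family possesses a further subsequence converging in $L^2(Q_T)$ to the \emph{same} $u_0$, and the standard subsequence principle upgrades this to $u_\varepsilon \to u_0$ in $L^2(Q_T)$ as $\varepsilon \to 0$. The main obstacle here is not any individual estimate, since all of these have been discharged in Theorem \ref{t4.2} and the passage to (\ref{4.18}); rather it is to ensure that the identification $u_1=v(\nabla u_0)$ and the well-posedness of (\ref{4.18}) are genuinely independent of the extracted subsequence, which is exactly what the at-most-one-solution property of (\ref{4.12}) supplies.
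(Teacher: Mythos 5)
Your proposal is correct and follows essentially the same route as the paper: the paper derives Theorem \ref{t4.3} directly from the discussion preceding it, namely Remark \ref{r4.1} for compactness, Theorem \ref{t4.2} for the identification of subsequential limits as solutions of (\ref{4.12}), the reduction to (\ref{4.18}) via $u_{1}=v(\nabla u_{0})$, and the uniqueness of the solution of (\ref{4.12}) to pass from subsequences to the whole family. Your explicit invocation of the standard subsequence principle simply makes the paper's implicit final step precise.
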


\end{document}